\newcommand{\quash}[1]{}  
\newcommand{\leftexp}[2]{{\vphantom{#2}}^{#1}{#2}}
\newcommand{\const}[1]{\overline{\QQ}_{\ell,#1}}
\newcommand{\twtimes}[1]{\stackrel{#1}{\times}}
\newcommand{\homo}[2]{\mathbf{H}_{#1}({#2})}   
\newcommand{\homog}[2]{\textup{H}_{#1}({#2})}  
\newcommand{\coho}[2]{\mathbf{H}^{#1}({#2})}    
\newcommand{\cohog}[2]{\textup{H}^{#1}({#2})}     
\newcommand{\hBM}[2]{\textup{H}^{\textup{BM}}_{#1}({#2})}  
\DeclareMathOperator{\GL}{GL}
\DeclareMathOperator{\SL}{SL}
\DeclareMathOperator{\Hom}{Hom}
\DeclareMathOperator{\End}{End}
\DeclareMathOperator{\Aut}{Aut}
\DeclareMathOperator{\Corr}{Corr}
\DeclareMathOperator{\Sym}{Sym}
\DeclareMathOperator{\Gr}{Gr}
\DeclareMathOperator{\Spec}{Spec}
\DeclareMathOperator{\id}{id}
\DeclareMathOperator{\ev}{ev}
\DeclareMathOperator{\Pic}{Pic}
\DeclareMathOperator{\Lie}{Lie}
\DeclareMathOperator{\triv}{triv}
\DeclareMathOperator{\codim}{codim}
\def\bH{\mathbf{H}}
\def\bI{\mathbf{I}}
\def\bR{\mathbf{R}}
\def\BB{\mathbb{B}}
\def\CC{\mathbb{C}}
\def\DD{\mathbb{D}}
\def\FF{\mathbb{F}}
\def\GG{\mathbb{G}}
\def\PP{\mathbb{P}}
\def\QQ{\mathbb{Q}}
\def\ZZ{\mathbb{Z}}
\def\frg{\mathfrak{g}}
\def\frt{\mathfrak{t}}
\def\frb{\mathfrak{b}}
\def\frc{\mathfrak{c}}
\def\frm{\mathfrak{m}}
\def\frX{\mathfrak{X}}
\def\frY{\mathfrak{Y}}
\def\calA{\mathcal{A}}
\def\calB{\mathcal{B}}
\def\calE{\mathcal{E}}
\def\calF{\mathcal{F}}
\def\calG{\mathcal{G}}
\def\calH{\mathcal{H}}
\def\calK{\mathcal{K}}
\def\calL{\mathcal{L}}
\def\calM{\mathcal{M}}
\def\calO{\mathcal{O}}
\def\calP{\mathcal{P}}
\def\calQ{\mathcal{Q}}
\def\tilg{\widetilde{\mathfrak{g}}}
\def\tils{\widetilde{s}}
\def\tilw{\widetilde{w}}
\def\tilW{\widetilde{W}}
\def\tila{\widetilde{a}}
\def\tilx{\widetilde{x}}
\def\tilf{\widetilde{f}}
\def\tcP{\widetilde{\calP}}
\def\tcA{\widetilde{\calA}}
\def\tcK{\widetilde{\calK}}
\def\hatO{\widehat{\calO}}
\def\calHn{\calH^{\natural}}
\def\xch{\mathbb{X}^*}
\def\xcoch{\mathbb{X}_*}
\def\Ql{\overline{\QQ}_\ell}
\def\htimes{\widehat{\times}}
\def\isom{\stackrel{\sim}{\to}}
\def\pH{\leftexp{p}{\mathbf{H}}}
\def\ptau{\leftexp{p}{\tau}}
\def\act{\textup{act}}
\def\rs{\textup{rs}}
\def\reg{\textup{reg}}
\def\red{\textup{red}}
\def\Ad{\textup{Ad}}
\def\coh{\textup{coh}}
\def\St{\textup{St}}
\def\parab{\textup{par}}
\def\bch{\textup{b.c.}}
\def\adj{\textup{ad.}}
\def\stsh{*!\to!*}
\def\XR{X_R}
\def\XS{S\times X}
\def\Poin{\textup{Poin}}
\def\punc{\textup{punc}}
\def\Bun{\textup{Bun}}
\def\Hit{\textup{Hit}}
\def\Grass{\mathcal{G}r}
\def\tilGr{\widetilde{\Grass}}
\def\Flag{\mathcal{F}\ell}
\def\tilGr{\widetilde{\Grass}}
\def\Bunpar{\Bun^{\parab}}
\def\stPic{\calP\textup{ic}}
\def\ani{\textup{ani}}
\def\Ah{\calA^{\heartsuit}}
\def\Aa{\calA^{\ani}}
\def\AHit{\calA^{\Hit}}
\def\Ag{\calA^{\diamondsuit}}
\def\tcArs{\tcA^{\rs}}
\def\Mpar{\mathcal{M}^{\parab}}
\def\Mparrs{\mathcal{M}^{\parab,\rs}}
\def\Mparreg{\mathcal{M}^{\parab,\reg}}
\def\MHit{\mathcal{M}^{\Hit}}
\def\MHitreg{\mathcal{M}^{\Hit,\reg}}
\def\Hecke{\mathcal{H}\textup{ecke}}
\def\Heckep{\Hecke^{\parab}}
\def\Heckeprs{\Hecke^{\parab,\rs}}
\def\fpar{f^{\parab}}
\def\fQl{f^{\parab}_*\Ql}
\def\tfQl{\widetilde{f}_*\Ql}
\def\Wa{W_{\textup{aff}}}
\def\disk{\mathfrak{D}}
\def\pdisk{\disk^{\times}}
\def\dual{^\vee}
\def\Gd{G\dual}
\theoremstyle{plain}
\newtheorem{theorem}[subsubsection]{Theorem}
\newtheorem{lemma}[subsubsection]{Lemma}
\newtheorem{cor}[subsubsection]{Corollary}
\newtheorem{prop}[subsubsection]{Proposition}
\newtheorem*{thm}{Theorem}
\newtheorem*{pp}{Proposition}
\theoremstyle{definition}
\newtheorem{defn}[subsubsection]{Definition}
\newtheorem{cons}[subsubsection]{Construction}
\newtheorem{remark}[subsubsection]{Remark}
\newtheorem{exam}[subsubsection]{Example}
\numberwithin{equation}{section}
\title[Towards a Global Springer Theory I]{Towards a Global Springer Theory I:\\ the affine Weyl group action}
\author{Zhiwei Yun}
\address{Department of Mathematics, Princeton University, Princeton, NJ 08544, USA}
\email{zyun@math.princeton.edu}
\date{December 2007; revised April 2009}
\subjclass[2000]{Primary 14H60, 20F55; Secondary 14F20, 20G35}
\begin{document}

\begin{abstract}
We propose a generalization of Springer representations to the context of groups over a global function field. The global counterpart of the Grothendieck simultaneous resolution is the parabolic Hitchin fibration. We construct an action of the affine Weyl group on the direct image complex of the parabolic Hitchin fibration. In particular, we get representations of the affine Weyl group on the cohomology of parabolic Hitchin fibers, providing the first step towards a global Springer theory.
\end{abstract}

\maketitle
\tableofcontents

\section{Introduction}

This is the first of a series of papers in which we propose a global analogue of the Springer theory. Here ``global'' refers to the function field of an algebraic curve.

\subsection{Springer theories: Classical, Local and Global}\label{ss:hist}

In this subsection, we will put our new theory in the appropriate historical context by briefly reviewing the classical and local versions of the Springer theories.

\subsubsection{The classical Springer theory}\label{sss:cl}

Let us start with a brief review of the classical Springer theory (\cite{Sp},\cite{L81},\cite{BM},\cite{CG}). The classical Springer theory originated from Springer's study of Green functions for finite groups of Lie type (\cite{Sp}). Let us state only the Lie algebra version of his result. Let $G$ be a reductive group over an algebraically closed field $k$ with Lie algebra $\frg$, and let $\calB$ be the flag variety classifying Borel subgroups of $G$. Let $\tilg$ be the scheme classifying pairs $(\gamma,B)$ where $\gamma\in\frg, B\in\calB$ such that $\gamma\in\Lie B$. Forgetting the choice of $B$ we get the so-called {\em Grothendieck simultaneous resolution}:
\begin{equation}\label{eq:intropi}
\pi:\tilg\to\frg.
\end{equation}
For an element $\gamma\in\frg$, the fiber $\calB_{\gamma}=\pi^{-1}(\gamma)$ is called the {\em Springer fiber} of $\gamma$. These are closed subschemes of the flag variety $\calB$. If $\gamma$ is regular semisimple, $\calB_\gamma$ is simply a $W$-torsor, where $W$ is the Weyl group of $G$. In general, $\calB_\gamma$ are of higher dimensions and have complicated singularities. Springer constructed representations of the Weyl group $W$ of $G$ on the top-dimensional cohomology of $\calB_\gamma$. Later, with the invention of Intersection Homology and perverse sheaves, a sheaf-theoretic version of Springer representations was given by Lusztig (\cite{L81}). He showed that there is an action of $W$ on the shifted perverse sheaf $\pi_*\Ql$ on the affine space $\frg$, hence incorporating the $W$-actions on the cohomology of various Springer fibers into a family. This approach was further developed by Borho-MacPherson (\cite{BM}), and they showed that all irreducible representations of $W$ arise as Springer representations. There are other constructions of Springer representations by Kazhdan-Lusztig (\cite{KL}) using the Coxeter presentation of $W$, and by Chriss-Ginzburg (\cite{CG}) using Steinberg correspondences.

One mysterious feature of Springer representations is that the $W$-action on the cohomology of $\calB_{\gamma}$ does {\em not} come from an {\em algebraic} action of $W$ on the variety $\calB_\gamma$, which makes it difficult to calculate. However, it is possible to deduce information about individual Springer representations from the knowledge of the (shifted) perverse sheaf $\pi_*\Ql$. The key fact here is that $\pi_*\Ql$ is the {\em middle extension} from a local system on the regular semisimple locus of $\frg$. Hence, in some sense, ``good'' (i.e., boring) Springer fibers control ``bad'' (i.e., interesting) ones.


\subsubsection{The local Springer theory}\label{sss:histloc}

Since the classical Springer theory is related to the representation theory of $G(\FF_q)$, we can think of it as a theory ``over $\Spec\FF_q$''. It is then natural to ask whether there are corresponding theories over a local field such as $k((t))$, or over a global field such as the function field of an algebraic curve over a field $k$. Here $k$ can be any field.

A local theory (for the local function field $F=k((t))$) already exists, by work of Lusztig (\cite{L96}). In the local theory, the loop group $G((t))$ (the Weil restriction of $G$ from $k((t))$ to $k$) replaces the group $G$, the affine Weyl group $\tilW=\xcoch(T)\rtimes W$ replaces the finite Weyl group $W$, and {\em affine Springer fibers} (cf. \cite{KL88}) replace Springer fibers. For an element $\gamma\in\frg\otimes k((t))$, the affine Springer fiber $M_{\gamma}$ is the closed sub-ind-schemes of the {\em affine flag variety} $\Flag_G$ parametrizing Iwahori subgroups $\bI\subset G((t))$ such that $\gamma\in\Lie\bI$. These $M_{\gamma}$ are highly non-reduced ind-schemes, typically with infinitely many irreducible components. In \cite{L96}, Lusztig constructed actions of the affine Weyl group $\tilW$ on the homology of affine Springer fibers, using the Coxeter presentation of $\tilW$. However, unlike the classical Springer theory, the local theory does not yet have a satisfactory sheaf-theoretic approach which allows to organize affine Springer fibers into geometrically manageable families. An essential difficulty is that the parameter $\gamma$ of affine Springer fibers naturally lives in an infinite-dimensional subspace of $\frg\otimes k((t))$, on which perverse sheaves, middle extensions, etc are hard to make sense of.


\subsubsection{The global Springer theory}\label{sss:global}

The goal of this series of papers is to give a sheaf-theoretic construction of a {\em global} Springer theory, i.e., we will start with a complete smooth connected curve $X$ over $k$ and a reductive group scheme $\underline{G}$ over $X$. We will mostly be working with the constant group scheme $G\times X$ (where $G$ is a connected reductive group over $k$); however, our results easily extend to the case of quasi-split group schemes over $X$, as we will see in \cite[Sec. 3.1]{GSIII}.  For convenience we will assume that the base field $k$ is algebraically closed; for later applications to the harmonic analysis of $p$-adic groups, we will take $k$ to be a finite field.


The global Springer theory, besides being an analogue of the classical and local Springer theories, is also inspired by two other sources of ideas. One is B-C.Ng\^{o}'s recent proof of the Fundamental Lemma (\cite{NgoFL}); the other is the ``mirror symmetric'' viewpoint of the geometric Langlands program, proposed by Kapustin-Witten (\cite{KW}).


The Fundamental Lemma is an identity of orbital integrals in $p$-adic harmonic analysis conjectured by Langlands-Shelstad, but its proof relies heavily on geometry. The geometry involved in the proof consists of a local part and a global part. The local part was studied by the pioneer work \cite{GKM} of Goresky-Kottwitz-MacPherson, where they interpreted the orbital integrals as the number of points on affine Springer fibers. The global part was initiated by Laumon-Ng\^o (\cite{LauN}) and finalized by Ng\^{o}. In \cite{NgoFL}, Ng\^o considers the (generalized) {\em Hitchin fibration} $f^{\Hit}:\calM^{\Hit}\to\AHit$ (for definition, see Sec. \ref{sss:par}). The number of points on the {\em Hitchin fibers} (fibers of $f^{\Hit}$) are given by global orbital integrals. Ng\^{o}'s work, especially the product formula \cite[Prop. 4.13.1]{NgoFL}, makes it clear that Hitchin fibers are the correct global analogue of affine Springer fibers in the affine Grassmannian $\Grass_G$. In this paper, we propose the {\em parabolic Hitchin fibers} as the global analogue of the affine Springer fibers $M_\gamma$ in the affine flag variety $\Flag_G$ mentioned in Sec. \ref{sss:histloc}. These are fibers of the {\em  parabolic Hitchin fibration}
\begin{equation*}
\fpar:\Mpar\to\AHit\times X.
\end{equation*}
For definition of the spaces involved, see Def. \ref{def:par1}. We consider $\fpar$ as the global analogue of the Grothendieck simultaneous resolution $\pi$ in (\ref{eq:intropi}). As a first step towards a global Springer theory, in Sec. \ref{s:Waff} we will construct an action of $\tilW=\xcoch(T)\rtimes W$ on the {\em parabolic Hitchin complex} $\fQl$ (more precisely, on its restriction to an open subset $\calA\times X\subset\AHit\times X$, see Rem. \ref{rm:largedeg}). Taking stalks of $\fQl$, we get representations of $\tilW$ on the cohomology of the parabolic Hitchin fibers $\Mpar_{a,x}$ for any $(a,x)\in\calA\times X$. In fact, we will do more. We will construct an action of the {\em graded double affine Hecke algebra} on the complex $\fQl$ in \cite[Sec. 3]{GSII}, extending the $\tilW$-action. 


The other source of ideas that inspired the global Springer theory is the new perspective of the geometric Langlands program from mirror symmetry, as proposed by Kapustin-Witten in \cite{KW}. In this picture, Hitchin fibrations $\MHit_G$ and $\MHit_{\Gd}$ for the group $G$ and its Langlands dual $\Gd$ serve as ``classical limits'' of a mirror pair, and the geometric Langlands conjecture predicts an equivalence between the derived categories of coherent sheaves on $\MHit_G$ and $\MHit_{\Gd}$. Moreover, this conjectural equivalence is expected to transform 'tHooft operators on $D_{\coh}(\MHit_G)$ (coming from certain Hecke correspondences) into Wilson operators on $D_{\coh}(\MHit_{\Gd})$ (tensor product with certain tautological bundles). What we will prove in \cite[Sec. 4]{GSIII} is reminiscent of this conjecture, or rather its shadow on the level of cohomology. Roughly speaking, we will identify the stable parts of the parabolic Hitchin complexes for $G$ and $\Gd$, and show that the lattice part of the global Springer action (which are analogues of 'tHooft operators) transforms to certain Chern class action (which are analogues of Wilson operators) under this identification. In other words, the two lattice actions that are encoded in the graded double affine Hecke algebra action mentioned above get interchanged under Langlands duality.


\subsection{New features of the global Springer theory}\label{ss:features}

The construction of the global Springer action are essential different from the construction of classical or local Springer actions. On one hand, compared to the affine Springer fibers in the local theory, parabolic Hitchin fibers have the advantage of naturally forming a {\em flat} family over a {\em finite-dimensional} base. This enables us to give a sheaf-theoretic construction. On the other hand, compared to the classical theory, the parabolic Hitchin complex $\fQl$ does not lie in a single perverse degree as the Springer sheaf $\pi_*\Ql$ does, hence the middle extension approach of Lusztig in \cite{L81} fails in the global situation. To get around this difficulty, we use cohomological correspondences to construct the $\tilW$-action on the complex $\fQl$. Construction of this flavor already existed for the classical Springer theory, using Steinberg correspondences (see \cite{CG} for a topological approach, see also Sec. \ref{ss:claction} for a brief review). In the global theory, we consider cohomological correspondences supported on the {\em Hecke correspondences} $\Heckep$ (see Sec. \ref{ss:Hecke}). A key geometric fact that makes this construction work is the codimension estimate of certain strata in $\calA$ (see Prop. \ref{p:globaldel}), proved by Ng\^o in \cite{NgoFL} on the basis of the work of Goresky-Kottwitz-MacPherson in \cite{GKM2}.

Although the parabolic Hitchin complex $\fQl$ fails to lie in a single perverse degree, it is (non-canonically) a direct sum of shifted simple perverse sheaves, each being a middle extension from its restriction to the regular semisimple locus of $\calA\times X$ (see the Support Theorem in \cite[Sec. 2]{GSIII}). In this sense, our previous remark on the classical Springer theory still holds in the global situation: the ``good'' (i.e., less interesting) parabolic Hitchin fibers control the bad (i.e., more interesting) ones. Here, ``good'' parabolic Hitchin fibers are disjoint unions of torsors under abelian varieties, as opposed to discrete set of points in the classical or local theory.


The global Springer theory has a few new features that are not shared by the classical theory. Some of these features should exist in the local theory but are technically more difficult to obtain. The first interesting feature is the non-semisimplicity of the affine Weyl group action. More precisely, the action of the lattice part $\xcoch(T)$ of $\tilW$ on $\cohog{*}{\Mpar_{a,x}}$ are not semisimple in general. In \cite[Sec. 5]{GSIII}, we will work out an example in the case of $G=\SL(2)$ to illustrate this phenomenon.


The second new feature is that global Springer theory carries richer and more interesting symmetry than the classical and local ones. There are at least three pieces of symmetry acting on the parabolic Hitchin fibration $\fpar:\Mpar\to\calA\times X$: the first is the affine Weyl group action on $\fQl$ to be constructed in this paper; the second is the {\em cup product} of Chern classes of certain line bundles on $\Mpar$; the third is the action of a Picard stack $\calP$ on $\Mpar$ (see Sec. \ref{ss:sym}). This Picard stack action is analogous to the action of the centralizer $G_{\gamma}$ of the element $\gamma$ on $\calB_\gamma$ in the classical situation. A large part of the second paper \cite{GSII} of the series will be devoted to the study of the interplay among these three pieces of symmetry: we will show in \cite[Sec. 3]{GSII} that the first and the second pieces of symmetry together give an action of the graded double affine Hecke algebra on $\fQl$; we will also study the relation between the third piece of symmetry (the cap product) and the first two in \cite[Sec. 5]{GSII}.


The third new feature is that notions such as endoscopy and Langlands duality from the modern theory of automorphic forms naturally show up in the context of global Springer theory. Endoscopic groups come into the picture when we try to understand the direct summands of $\fQl$ which are supported on proper closed subsets of $\calA\times X$. In \cite[Sec. 3]{GSIII} we will prove the Endoscopic Decomposition Theorem, which reduces the study of these direct summands to groups smaller than $G$, the {\em endoscopic groups}. The Langlands dual group comes into the story when we try to understand the global Springer action on those direct summands of $\fQl$ which are support on the whole $\calA\times X$. We will prove in \cite[Sec. 4]{GSIII} that the lattice part of the global Springer action can be understood via certain Chern classes acting on the parabolic Hitchin complex for the Langlands dual group $\Gd$, and {\em vice versa}.

\subsection{Main Results}

Let $X$ be a projective smooth connected curve over an algebraically closed field $k$. Let $G$ be a reductive group over $k$. Fix a Borel subgroup $B$ of $G$. Fix a divisor $D$ on $X$ with $\deg(D)\geq2g_X$ ($g_X$ is the genus of $X$). The {\em parabolic Hitchin moduli stack} $\Mpar=\Mpar_{G,X,D}$ is the algebraic stack which classifies quadruples $(x,\calE,\varphi,\calE^B_x)$, where $x$ is a point on $X$, $\calE$ is a $G$-torsor over $X$, $\varphi$ is a global section of the vector bundle $\Ad(\calE)\otimes\calO_X(D)$ on $X$, and $\calE^B_x$ is a $B$-reduction of the restriction of $\calE$ at $x$ compatible with $\varphi$. For a concrete description of this stack in the case of $G=\GL(n)$, see Example \ref{ex:gl}.

Let $T$ be the quotient torus of $B$ (the universal Cartan) and $\frt$ be its Lie algebra. Let $\frc=\frt\sslash W=\frg\sslash G=\Spec k[f_1,\cdots,f_n]$ be the GIT adjoint quotient, with fundamental invariants $f_1,\cdots,f_n$ of degree $d_1,\cdots,d_n$. Let $\AHit=\AHit_{G,X,D}$ be the {\em Hitchin base}:
\begin{equation*}
\AHit=\bigoplus_{i=1}^n \cohog{0}{X,\calO_X(d_iD)}.
\end{equation*}

The morphism
\begin{eqnarray*}
\fpar:\Mpar&\to&\AHit\times X\\
(x,\calE,\varphi,\calE^B_x)&\mapsto&(f_1(\varphi),\cdots,f_n(\varphi),x)
\end{eqnarray*}
is called the {\em parabolic Hitchin fibration}. In the case of $\GL(n)$, the fibers of $\fpar$ can be described in terms of the compactified Picard stack of the spectral curves, see Example \ref{ex:gl}. The direct image complex $\fQl$ of the constant sheaf $\Ql$ under the morphism $\fpar$ is called the {\em parabolic Hitchin complex}. In the following, we will restrict the parabolic Hitchin complex to an open subset $\calA$ of $\AHit$. If $\textup{char}(k)=0$, we can $\calA$ to be the anisotropic locus $\calA^{\ani}$ of $\AHit$ (see Definition \ref{def:ani}). If $\textup{char}(k)>0$, we let $\calA\subset\calA^{\ani}$ to be the locus where the codimension estimate in Prop. \ref{p:globaldel} holds. It is expected by Ng\^o that this codimension estimate holds unconditionally in any characteristic, in which case we could always take $\calA=\Aa$. In any case, $\calA$ is nonempty if and only $G$ is semisimple. We automatically restrict all stacks over $\AHit$ to the open subset $\calA$, without changing notations.

In Sec. \ref{ss:geom}, we show that $\Mpar$ is a smooth Deligne-Mumford stack and $\fpar$ is a proper morphism. Our first main result, which justifies the phrase ``global Springer theory'' in the title, constructs an action of the affine Weyl group $\tilW=\xcoch(T)\rtimes W$ (where $\xcoch(T)$ is the cocharacter lattice of $T$) on the complex $\fQl$.

\begin{thm}[See Th. \ref{th:action}]
For each $\tilw\in\tilW$, there is a self-correspondence $\calH_{\tilw}$ of $\Mpar$ over $\calA\times X$ which is generically a graph. The fundamental class $[\calH_{\tilw}]$ of $\calH_{\tilw}$ gives an endomorphism $[\calH_{\tilw}]_\#$ of $\fQl$. The assignment $\tilw\mapsto[\calH_{\tilw}]_\#$ gives an action of $\tilW$ on $\fQl$.
\end{thm}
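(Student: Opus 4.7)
The plan is to realize each $\calH_{\tilw}$ as a closed substack of a parabolic Hecke stack, stratified by the relative position of two Borel reductions at $x$; to verify that on the regular semisimple locus $\fpar$ becomes a $\tilW$-torsor with $\calH_{\tilw}$ the graph of multiplication by $\tilw$; to define the operators $[\calH_{\tilw}]_\#$ via the cohomological correspondence formalism; and to propagate the multiplication relation from the regular semisimple locus to all of $\calA\times X$ using the codimension estimate of Prop.~\ref{p:globaldel}.

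\textbf{Construction of $\calH_{\tilw}$.} I will first introduce the parabolic Hecke stack $\Heckep\to\calA\times X$ classifying data $(x,m_1,m_2)$, where $m_i=(\calE_i,\varphi_i,\calE^B_{i,x})\in\Mpar$ are two parabolic Hitchin pairs with the same underlying point $x\in X$ and the same Hitchin image $a\in\calA$, together with an isomorphism $\iota\colon(\calE_1,\varphi_1)|_{X\smallsetminus\{x\}}\isom(\calE_2,\varphi_2)|_{X\smallsetminus\{x\}}$. Local coordinates at $x$ identify the combined datum of $\iota$ and the two Borel reductions with a point of $\bI\backslash G((t))/\bI\cong\tilW$, giving a stratification of $\Heckep$ by $\tilW$; I define $\calH_{\tilw}$ as the closure of the stratum labelled by $\tilw$. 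Each $\calH_{\tilw}$ comes equipped with two proper projections to $\Mpar$ over $\calA\times X$.

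\textbf{Definition of $[\calH_{\tilw}]_\#$ and generic picture.} Over the regular semisimple locus $\calA^{\rs}\times X$, the Picard stack $\calP$ acts simply transitively on Hitchin fibers, and adjoining the Borel reduction at $x$ upgrades $\Mpar|_{\calA^{\rs}\times X}$ to an \'etale $\tilW$-torsor over $\Mpar^{\Hit}|_{\calA^{\rs}\times X}\times X$; in this range $\calH_{\tilw}$ is precisely the graph of the automorphism $\tilw$, in particular smooth of the correct dimension, which justifies the claim that $\calH_{\tilw}$ is generically a graph. Since $\Mpar$ is smooth and $\fpar$ is proper by Sec.~\ref{ss:geom}, the fundamental class $[\calH_{\tilw}]$ defines a cohomological correspondence and yields, by the standard base change and adjunction machinery, an endomorphism $[\calH_{\tilw}]_\#\in\End(\fQl)$. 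The stratum $\calH_1$ is the diagonal, so $[\calH_1]_\#=\id$.

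\textbf{The $\tilW$-relation and the main obstacle.} The essential content is the multiplicativity $[\calH_{\tilw_1}]_\#\circ[\calH_{\tilw_2}]_\#=[\calH_{\tilw_1\tilw_2}]_\#$. The composition of cohomological correspondences is computed by the fiber product $\calH_{\tilw_1}\times_{\Mpar}\calH_{\tilw_2}$, and the ``product in $\tilW$'' provides a canonical morphism $\calH_{\tilw_1\tilw_2}\to\calH_{\tilw_1}\times_{\Mpar}\calH_{\tilw_2}$ which, by the torsor picture of the previous step, is an isomorphism after restriction to $\calA^{\rs}\times X$. Any additional components of the fiber product are supported over $(\calA\smallsetminus\calA^{\rs})\times X$, whose strata are controlled in codimension by Prop.~\ref{p:globaldel}; a dimension count then shows the corresponding cycles are too small to contribute to $\End(\fQl)$, so only the main component survives and one recovers $[\calH_{\tilw_1\tilw_2}]_\#$. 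The hard part will be precisely this final codimension argument: unlike in the classical Springer picture, $\fQl$ does not lie in a single perverse degree, so Lusztig's middle extension principle is not available; instead one must leverage the smoothness of $\Mpar$, the properness of $\fpar$, and Ng\^o's codimension estimate to rule out spurious cohomological contributions coming from the singular components of the iterated Hecke fiber products.
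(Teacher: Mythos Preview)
Your overall architecture matches the paper's: define Hecke correspondences, show they are graph-like over the regular semisimple locus, and use the codimension estimate to propagate the group relation. But two points in your proposal are either incorrect or genuinely incomplete.

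\textbf{The ``$\tilW$-torsor'' picture is wrong, and with it your description of the lattice action.} Over $(\calA\times X)^{\rs}$ one has $\Mpar\cong\MHit\times_{\calA}\tcA^{\rs}$, which is a $W$-torsor over $\MHit\times X$, not a $\tilW$-torsor over anything. The lattice $\xcoch(T)$ does not act freely; it acts through the Picard stack $\calP$ by translation on $\MHit$. The paper spends real effort constructing this: it builds sections $s_\lambda:\tcA^{0}\to\calP$ (via $\tilGr_J\cong\tilGr_T$ over the rs locus, then extended by a properness and normality argument) and defines the right action of $(\lambda,w)$ as $(m,\tila)\mapsto(s_\lambda(\tila)\cdot m,\,w^{-1}\tila)$. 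Only then is $\calH_{\tilw}$ defined as the closure of the graph of this action. Your Bruhat-stratum definition via $\bI\backslash G((t))/\bI$ is a different object; even if it agrees with the paper's $\calH_{\tilw}$ over the rs locus (which you have not checked), you still need the geometric action to know that the graphs compose correctly there.

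\textbf{The dimension count is under-specified.} You invoke Prop.~\ref{p:globaldel}, but that estimates $\codim_{\calA}(\calA_\delta)$; what the argument actually needs is the local refinement $\codim_{\calA\times X}((\calA\times X)_\delta)\geq\delta+1$ for $\delta\geq1$, together with the identification of the fibers of $\overleftarrow{h}:\Heckep\to\Mpar$ with affine Springer fibers of dimension exactly $\delta(a,x)$. These two facts combine to give $\dim\Heckep_\delta\leq\dim\Mpar-1$ for each $\delta$, which is condition (G-2); this is the content of the paper's Lem.~\ref{l:HeckeG2}. Once (G-2) holds, the paper does not argue cycle-by-cycle as you propose, but packages everything into a convolution algebra $\Corr(\Heckep;\Ql,\Ql)$ whose action on $\fQl$ factors through restriction to the rs locus (Prop.~\ref{p:alghom}); the group algebra $\Ql[\tilW]$ then maps in via the graph decomposition of $\Heckeprs$. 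This is cleaner than chasing extra components of individual fiber products.
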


For the meaning of $[\calH_{\tilw}]_\#$ (i.e., the action of a cohomological correspondence on complexes of sheaves), see Sec. \ref{ss:corr}.


There is a variant of this theorem. We can define an ``enhanced'' version of the parabolic Hitchin fibration and the parabolic Hitchin complex. In fact, the morphism $\fpar$ factors as
\begin{equation*}
\fpar:\Mpar\xrightarrow{\tilf}\tcA\xrightarrow{q}\calA\times X
\end{equation*}
where $q:\tcA\to\calA\times X$ is a branched $W$-cover, called the {\em universal cameral cover}. We call the morphism $\tilf$ the {\em  enhanced parabolic Hitchin fibration} and the complex $\tfQl$ the {\em enhanced parabolic Hitchin complex}.

\begin{pp}[see Prop. \ref{p:enhancedaction}]
There is a natural $\tilW$-equivariant structure on $\tfQl$ compatible with the action of $\tilW$ on $\tcA$ via the quotient $\tilW\twoheadrightarrow W$.
\end{pp}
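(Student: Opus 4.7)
The plan is to refine the proof of Theorem~\ref{th:action} by showing that each Hecke self-correspondence $\calH_{\tilw}$ already lives over $\tcA$ once its right leg is twisted by the Weyl image of $\tilw$; the cohomological correspondence formalism will then upgrade the endomorphism $[\calH_{\tilw}]_\#$ of $\fQl$ to the required equivariance datum on $\tfQl$.

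Since $\xcoch(T) \subset \tilW$ acts trivially on $\tcA$, what needs to be constructed is, for each $\tilw \in \tilW$ with image $\bar{w} \in W$, an isomorphism $\alpha_{\tilw} \colon \bar{w}^*\tfQl \isom \tfQl$, satisfying the $\tilW$ cocycle identity and pushing forward under $q$ to $[\calH_{\tilw}]_\#$. The key geometric assertion is the lifting identity
\[
\tilf \circ \tilf_R \;=\; \bar{w} \circ \tilf \circ \tilf_L \colon \calH_{\tilw} \to \tcA,
\]
where $\tilf_L, \tilf_R \colon \calH_{\tilw} \to \Mpar$ are the two legs of the correspondence. Via the semidirect decomposition $\tilW = \xcoch(T) \rtimes W$, this splits into two cases. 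For a lattice element $\tilw = \lambda$, the corresponding Hecke modification alters the $G$-torsor and Higgs field away from, or transversally to, the marked point $x$ but preserves the $B$-reduction at $x$; hence the induced cameral datum is unchanged, and the identity holds trivially with $\bar{w}=1$. For a simple reflection $\tilw = s_i$, the correspondence parametrizes pairs of parabolic Hitchin objects whose $B$-reductions at $x$ differ by an elementary modification in the $i$-th direction, and a direct unwinding of the construction of $q\colon \tcA \to \calA \times X$ should show that this modification translates the cameral point over $x$ precisely by $s_i$.

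Granted the lifting, the cohomological correspondence machinery of \S\ref{ss:corr}, applied to $\calH_{\tilw}$ now viewed as a correspondence over $\tcA$ via the pair $(\tilf \circ \tilf_L,\; \bar{w}^{-1}\circ \tilf \circ \tilf_R)$, produces the desired $\alpha_{\tilw}$. The cocycle identity $\alpha_{\tilw \tilw'} = \alpha_{\tilw} \circ \bar{w}^{*} \alpha_{\tilw'}$ follows from the composition law for Hecke correspondences combined with the refined lifting, by the same argument that establishes associativity of $\tilw \mapsto [\calH_{\tilw}]_\#$ in the proof of Theorem~\ref{th:action}; no additional braid or translation relation is forced by the $\tcA$-refinement. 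Compatibility with the original action on $\fQl$ is then automatic on applying $q_*$, using $q \circ \bar{w} = q$.

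The main obstacle is the lifting assertion for simple reflections: tracking how an elementary modification of the $B$-reduction at $x$ moves the cameral point in $\tcA$. This should reduce to a local calculation at $x$, where both the $s_i$-modification and the universal cameral cover admit explicit descriptions (the former on the affine flag variety, the latter via the regular centralizer), and the required equality should amount to an unwinding of definitions. Everything else is formal from the machinery already developed in the un-enhanced case.
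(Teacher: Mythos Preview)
Your overall plan---view $\calH_{\tilw}$ as a correspondence over $\tcA$ with the left leg twisted by the Weyl image, then feed it through the cohomological-correspondence machinery and verify the cocycle identity---is exactly what the paper does. The difference is in how you propose to establish the lifting identity $\tilf\circ\overrightarrow{h}=R_w\circ\tilf\circ\overleftarrow{h}$ on $\calH_{\tilw}$: you want to split into lattice elements and simple reflections and do a local computation for the latter. This is both unnecessary and not quite well-posed. The lifting identity is a \emph{geometric} statement about a single closed substack, so checking it on Coxeter generators of $\tilW$ does not automatically propagate to arbitrary $\tilw$ (there is no composition law of the form $\calH_{\tilw_1\tilw_2}=\calH_{\tilw_1}*\calH_{\tilw_2}$ before taking closures). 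The paper bypasses all of this: by Definition~\ref{def:redHecke}, $\calH_{\tilw}$ is the closure of the graph of the right $\tilw$-action described explicitly in (\ref{eq:rightaction}), which sends the $\tcA$-coordinate $\tila$ to $w^{-1}\tila$. Hence the lifting identity holds on the dense open $\calH_{\tilw}^{\rs}$ by inspection, and then on all of $\calH_{\tilw}$ because $\tcA$ is separated. Equivalently, $\calH_{\tilw}\subset\Heckep_{[w]}$ (closure of something in $\Heckep_{[w]}^{\rs}$), and $\Heckep_{[w]}$ is \emph{defined} as the preimage of the $w$-graph in $\frt_D\times_{\frc_D}\frt_D$. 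Your local calculation for $s_i$ also seems to conflate $\calH_{s_i}$ with the Bruhat stratum $\beta^{-1}(\Hecke^{\Bun}_{s_i})$, which is a different object.

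For the cocycle identity, your gesture toward ``the same argument as in Theorem~\ref{th:action}'' is right in spirit but undersells what is needed. The paper does not reuse the convolution-algebra argument of Theorem~\ref{th:action} directly; instead it sets up the twisted correspondence $\calH^{\natural,w_2}_{\tilw_1}$ over $\tcA$, applies Prop.~\ref{p:comp} (composition of graph-like correspondences) and Lem.~\ref{l:openpart} over the base $\tcA$ rather than $\calA\times X$, and observes that over $\tcA^{\rs}$ the two sides are graphs of the same automorphism. This is the same machinery, but the point is that (G-2) must be verified over $\tcA$, which follows from Lemma~\ref{l:HeckeG2} since $q:\tcA\to\calA\times X$ is finite.
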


All these results can be generalized to quasi-split group schemes over $X$ whose split center is trivial. We will make remarks on this in \cite[Sec. 3.1]{GSIII}.

This paper is organized as follows. In Sec. \ref{s:not}, we fix notations for all the papers in this series. In Sec. \ref{s:Mpar}, we define and study the geometry of the parabolic Hitchin fibrations. In Sec. \ref{s:Waff}, we prove the two main results above, after introducing Hecke correspondences between parabolic Hitchin moduli stacks. In App. \ref{s:corr}, we review the general formalism of cohomological correspondences, with emphasis on {\em graph-like} correspondences (see subsection \ref{ss:grlike}).

\subsection{Preview of \cite{GSII}, \cite{GSIII}}
In \cite{GSII}, we will extend the $\tilW$-action on $\fQl$ to the action of the graded {\em double affine Hecke algebra} (DAHA) on $\fQl$. We also generalize this action to ``parahoric versions'' of Hitchin stacks. We then study the interaction of the graded DAHA action and the cap product action given by the Picard stack $\calP$.

In \cite{GSIII}, we study the decomposition of the complex $\fQl$ according to characters of $\xcoch(T)$. We will prove the Endoscopic Decomposition Theorem, which links certain direct summands of $\fQl$ to the endoscopic groups of $G$. This result generalizes Ng\^o's geometric stabilization of the trace formula in \cite{NgoFL}. The second result links the stable parts of the parabolic Hitchin complexes for Langlands dual groups, and establishes a relation between the global Springer action on one hand and certain Chern class action on the other. As we mentioned in Sec. \ref{sss:global}, this result is inspired by the mirror symmetry between dual Hitchin fibrations. Finally, we present the first nontrivial example in the global Springer theory.

We will make remarks on the applications of the Endoscopic Decomposition Theorem in the Introduction of \cite{GSIII}.

\subsection*{Acknowledgment}
The author benefitted a lot from the lectures given by Ng\^{o} Bao Ch\^au on the Fundamental Lemma during the fall semesters of 2006 and 2007, as well as some subsequent discussions. The author thanks R.MacPherson and M.Goresky for their encouragement and for patiently listening to his presentations in the early stage of this work. The author also thanks G.Lusztig for help discussions and R.Bezrukavnikov, D.Gaitsgory, D.Nadler, D.Treumann and X.Zhu for their interest in the topic.

\section{Notations and conventions}\label{s:not}

In this section, we fix notations and conventions for all the papers in the series.

\subsection{General notations}\label{not:gen}
Throughout these papers, we work over a fixed algebraically closed field $k$. All fiber products of stacks without specified base are understood to be fiber products over $k$. We fix a prime $\ell$ different from $\textup{char}(k)$.

All torsors are right torsors unless otherwise stated.

Suppose $\frX$ (resp. $\frY$) is a stack with right (resp. left) action of a group scheme $A$ over $k$, then we write
\begin{equation*}
\frX\twtimes{A}\frY:=[(\frX\times\frY)/A]
\end{equation*}
for the stack quotient of $\frX\times\frY$ by the anti-diagonal right $A$-action: $a\in A(R)$ acts on $\frX(R)\times\frY(R)$ by $(x,y)\mapsto(xa,a^{-1}y)$, for any $k$-algebra $R$ and $x\in\frX(R),y\in\frY(R)$. 

For a group $A$ and a group $B$ acting on $A$ via a homomorphism $\rho:B\to\Aut(A)$, we can form the semidirect product $A\rtimes^{\rho}B$. When the action $\rho$ is clear from the context, we often write $A\rtimes B$ for simplicity.

For a group $A$ acting on an abelian group $A$, let $V^A\subset V$ be the invariants and $V\twoheadrightarrow V_A$ be the coinvariants under the $A$-action.

For an algebraic torus $T$, let $\xcoch(T)$ and $\xch(T)$ be its cocharacter and character lattices.

For a commutative group scheme (or a Picard stack which is Deligne-Mumford) $P$ over $S$ with connected fibers, let $V_\ell(P/S)=T_\ell(P/S)\otimes_{\ZZ_\ell}\Ql$ be the sheaf of $\Ql$-Tate modules of $P$ over $S$.

For a morphism of schemes $f:\frX\to\frY$, let $\stPic(\frX/\frY)$ be the Picard stack over $\frY$ classifying line bundles along the fibers of $f$. Let $\Pic(\frX/\frY)$, if exists, be the relative Picard scheme classifying line bundle along the fibers of $f$. Similarly, for any torus $T$, we define $\stPic_T(\frX/\frY)$ and $\Pic_T(\frX/\frY)$ to be the Picard stack and Picard scheme of $T$-torsors along the fibers of $f$.

For a flat morphism $f:\frX\to\frY$, we denote its relative dimension (the dimension of any geometric fiber) by $\dim(f)$ or, if the morphism is clear from the context, by $\dim(\frX/\frY)$.

\subsection{Notations concerning the curve $X$}\label{not:X}
For most part of these papers (except the Appendices on cohomological correspondences), we fix $X$ to be a smooth connected projective curve over $k$ of genus $g_X$. 

For any $k$-algebra $R$, let $\XR=\Spec R\times_{\Spec k} X$. If $x\in X(R)$ is an $R$-point of $X$, let $\Gamma(x)\subset\XR$ be the graph of $x:\Spec R\to X$. We let
\begin{equation}\label{eq:disk}
\disk_x=\Spec\hatO_x;\hspace{1cm}\pdisk_x=\Spec\hatO^{\punc}_x=\disk_x-\Gamma(x)
\end{equation}
be the spectra of the formal completion and the punctured formal completion of $\XR$ along $\Gamma(x)$. In general, for any scheme $S$ and any $x\in X(S)$, we also let $\Gamma(x)\subset S\times X$ be the graph of $x:S\to X$.

For a line bundle $\calL$ (or a divisor $D$) over $X$, we let $\rho_{\calL}$ (or $\rho_{D}$) denote the complement of the zero section in the total space of the line bundle $\calL$ (or $\calO_X(D)$). This is naturally a $\GG_m$-torsor.

For a divisor $D$ on $X$ and a quasi-coherent sheaf $\calF$ on $X$, we write $\calF(D):=\calF\otimes_{\calO_X}\calO_X(D)$.

Let $D$ be a divisor on $X$ and let $\frY$ be a stack over $X$ with a $\GG_m$-action such that the structure morphism $\frY\to X$ is $\GG_m$-invariant. Then we define
\begin{equation}\label{eq:twDX}
\frY_D:=\rho_D\twtimes{\GG_m}_X\frY=[(\rho_D\times_X\frY)/\GG_m].
\end{equation}
If $\frY$ is a stack over $k$, with no specified morphism to $X$ from the context, we also write
\begin{equation}\label{eq:twD}
\frY_D:=\rho_D\twtimes{\GG_m}\frY=[(\rho_D\times\frY)/\GG_m].
\end{equation}

\subsection{Notations concerning the reductive group $G$}\label{not:G}
Throughout these papers, we fix $G$ to be a connected reductive group over $k$ of semisimple rank $n$, and we fix a Borel subgroup $B$ of $G$ with universal quotient torus $T$. 

For each maximal torus in $B$ we get a Weyl group. If we use elements in $B$ to conjugate one torus to another, their Weyl groups are canonically identified. Let $W$ denote this canonical Weyl group. Similarly, the based root system $\Phi^+\subset\Phi\subset\xch(T)$ and the based coroot system $\Phi^{\vee,+}\subset\Phi^\vee\subset\xcoch(T)$ are canonically defined. Let $\ZZ\Phi^\vee\subset\xcoch(T)$ be the $\ZZ$-lattice spanned by the coroots $\Phi^\vee$.

Throughout the Thesis, we assume that char($k$) does not divide the order of $W$. Let
\begin{eqnarray*}
\Wa:=\ZZ\Phi^\vee\rtimes W,\\
\tilW:=\xcoch(T)\rtimes W
\end{eqnarray*}
be the {\em affine Weyl group} and the {\em extended affine Weyl group} of $G$.

Let $\frg,\frb,\frt$ be the Lie algebras of $G,B,T$ respectively. Let $\frc$ be the GIT quotient $\frg\sslash G=\frt\sslash W=\Spec k[f_1,\cdots,f_n]$. The affine space $\frc$ inherits a weighted $\GG_m$-action from $\frt$ such that $f_i$ is homogeneous of degree $d_i\in\ZZ_{\geq1}$.

Let $\frc^{\rs}$ be the regular semisimple locus of $\frc$, which is the complement of the discriminant divisor $\Delta\subset\frc$. For a stack $\frX$ or a morphism $F$ over $\frc$ or $\frc/\GG_m$, we use $\frX^{\rs}$ and $F^{\rs}$ to denote their restrictions to $\frc^{\rs}$ or $\frc^{\rs}/\GG_m$.

\subsection{Sheaf-theoretic notations}
For a Deligne-Mumford $\frX$, let $D^b_c(\frX)$ denote the derived category of constructible $\Ql$-complexes on $\frX$. We will consider both the usual $t$-structure on $D^b_c(\frX)$ and the perverse $t$-structure on $D^b_c(\frX)$. We follow the notation of \cite{BBD} concerning the perverse $t$-structures. For an object $\calF\in D^b_c(\frX)$, let $\tau_{\leq i}\calF$ and $\ptau_{\leq i}\calF$ be the truncations of $\calF$ under the usual and perverse $t$-structures; let $\bH^i\calF$ and $\pH^i\calF$ be the cohomology sheaves and perverse cohomology sheaves of $\calF$.

Let $f:\frX\to\frY$ be a morphism of Deligne-Mumford stacks. When the morphism $f$ is clear from the context, we will use $\homo{*}{\frX/\frY}$ to denote the homology complex $f_!f^!\const{\frY}$ on $\frY$, and use $\coho{*}{\frX/\frY}$ to denote the cohomology complex $f_*\const{\frX}$ on $\frY$. We also set
\begin{eqnarray*}
\homo{i}{\frX/\frY}&:=&\bH^{-i}(\homo{*}{\frX/\frY});\\
\coho{i}{\frX/\frY}&:=&\bH^i(\coho{*}{\frX/\frY})=\bR^if_*\const{\frX}.
\end{eqnarray*}
If $U\subset\frY$ is an open substack, we sometimes abuse the notation and simply write $\homo{*}{\frX/U}$, etc. to mean $\homo{*}{f^{-1}(U)/U}$, etc. If $\frY=\Spec k$, we simply write $\homog{*}{\frX}$ and $\cohog{*}{\frX}$ for the homology and cohomology groups of $\frX$.

For a morphism $f:\frX\to\frY$, we sometimes use $\DD_{\frX/\frY}$ or $\DD_f$ to denote the relative dualizing complex $f^!\Ql$. When $\frY=\Spec k$, we simply write $\DD_{\frX}$ for the dualizing complex of $\frX$, and we recall that the Borel-Moore homology (homology with closed support) is defined as
\begin{equation*}
\hBM{i}{\frX}:=\cohog{-i}{\frX,\DD_{\frX}}.
\end{equation*}

Let $\calF\mapsto\calF(1)$ be the usual Tate twist in $D^b_c(\frX)$. We sometimes need a half Tate twist $(1/2)$ only for notational convenience. In these situations, we formally add a square root of $\Ql(1)$ to the category $D^b_c(\frX)$.


\section{The parabolic Hitchin fibration}\label{s:Mpar}
In this section, we introduce the main object of our study, the parabolic Hitchin moduli stack and the parabolic Hitchin fibration. We study their basic geometric properties, such as smoothness, flatness, product formula, etc., parallel to the study of the Hitchin fibration by Ng\^o in \cite{NgoFL}. Many proofs in this section are borrowed from their counterparts in {\em loc.cit.} with slight modification.

\subsection{The parabolic Hitchin moduli stack}\label{ss:parH}
In this subsection, we define the parabolic Hitchin moduli stack and the parabolic Hitchin fibration. We first recall that $\Bun_G$ is the moduli stack classifying $G$-torsor over the curve $X$. Let $\Bunpar_G$ be the moduli stack of $G$-torsors on $X$ with a $B$-reduction at a point. More precisely, for any scheme $S$, $\Bunpar_G(S)$ is the groupoid of triples $(x,\calE,\calE^B_x)$ where
\begin{itemize}
\item $x:S\to X$ with graph $\Gamma(x)$;
\item $\calE$ is a $G$-torsor over $S\times X$;
\item $\calE^B_x$ is a $B$-reduction of $\calE$ along $\Gamma(x)$.
\end{itemize}

\subsubsection{The parabolic Hitchin moduli stack}\label{sss:par}

Fix a divisor $D=2D'$ on $X$ (the assumption that $D$ is the square of another divisor is only used to construct a global Kostant section later, cf. \cite[2.2]{NgoFL} and (\ref{eq:kos})). Assume $\deg(D)\geq2g_X$ ($g_X$ is the genus of $X$).

Recall from \cite[Def. 4.2.1]{NgoFib} that the (generalized) {\em Hitchin moduli stack} $\MHit=\MHit_{G,X,D}$ is the functor which sends a scheme $S$ to the groupoid of {\em Hitchin pairs} $(\calE,\varphi)$ where
\begin{itemize}
\item $\calE$ is a $G$-torsor over $S\times X$;
\item $\varphi\in\cohog{0}{S\times X,\Ad(\calE)\otimes_{\calO_X}\calO_X(D)}$ is called a {\em Higgs field}.
\end{itemize}
Here $\Ad(\calE)=\calE\twtimes{G}\frg$ is the adjoint bundle associated to $\calE$.

\begin{defn}\label{def:par1}
The {\em parabolic Hitchin moduli stack} (or more precisely, {\em Hitchin moduli stack with parabolic structures}) $\Mpar=\Mpar_{G,X,D}$ is the
functor which sends a scheme $S$ to the groupoid of quadruples $(x,\calE,\varphi,\calE^{B}_x)$, where
\begin{itemize}
\item $x:S\to X$ with graph $\Gamma(x)$;
\item $(\calE,\varphi)\in\MHit_{G,X,D}(S)$ is a Hitchin pair;
\item $\calE^{B}_x$ is a $B$-reduction of $\calE$ along $\Gamma(x)$,
\end{itemize}
such that $\varphi$ is {\em compatible} with $\calE^B_x$, i.e.,
\begin{equation*}
\varphi|_{\Gamma(x)}\in \cohog{0}{\Gamma(x),\Ad(\calE^{B}_x)\otimes_{\calO_S}x^*\calO_X(D)}
\end{equation*}
\end{defn}

\begin{remark}
When part or all of the data $(G,X,D)$ is clear from the context, we will omit it from the subscripts of $\Mpar_{G,X,D}$.
\end{remark}

Forgetting the Higgs field $\varphi$ gives a morphism $\Mpar\to\Bun^{\parab}_G$ and forgetting the choice of the $B$-reduction gives a morphism
\begin{equation*}
\pi_{\calM}:\Mpar\to\MHit\times X.
\end{equation*}

We give a second (but equivalent) definition of $\Mpar$. By \cite[Sec. 4]{NgoFib}, the Hitchin moduli stack can be interpreted as classifying sections
\begin{equation*}
X\to [\frg/G]_D:=\rho_D\twtimes{\GG_m}[\frg/G].
\end{equation*}
Here $[\frg/G]$ is the adjoint quotient stack of $\frg$ by $G$, $\rho_D$ is the $\GG_m$-torsor over $X$ associated to the line bundle $\calO_X(D)$ (for the meaning of the twisting $(-)_D$ in general, see Sec. \ref{not:gen}). We have an evaluation morphism
\begin{equation*}
\ev^{\Hit}: \MHit\times X\to [\frg/G]_D.
\end{equation*}
Consider the $D$-twisted form of the Grothendieck simultaneous resolution:
\begin{equation*}
\pi:[\frb/B]_D\to[\frg/G]_D.
\end{equation*}

It it easy to see that

\begin{lemma}\label{l:par2}
The parabolic Hitchin moduli stack $\Mpar$ fits into a Cartesian square
\begin{equation}\label{d:2def}
\xymatrix{\Mpar\ar[rr]^{\ev^{\parab}}\ar[d]^{\pi_{\calM}} && [\frb/B]_D \ar[d]^{\pi}\\
          \MHit\times X\ar[rr]^{\ev^{\Hit}} && [\frg/G]_D}.
\end{equation}
\end{lemma}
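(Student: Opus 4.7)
The proof is essentially a matter of unpacking the functor-of-points of each corner of the square and matching them with the data defining $\Mpar$. The plan is therefore to describe $S$-points of the fiber product $(\MHit \times X) \times_{[\frg/G]_D} [\frb/B]_D$ and exhibit a natural identification with $\Mpar(S)$, functorial in $S$.

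First I would unpack $[\frg/G]_D$ and $[\frb/B]_D$. Using the general convention from Section~\ref{not:X}, an $S$-point of $[\frg/G]_D$ consists of a map $x\colon S\to X$, a $G$-torsor $\calF$ on $S$, and a section $\phi\in\cohog{0}{S,\Ad(\calF)\otimes_{\calO_S}x^*\calO_X(D)}$; the $\GG_m$-twisting by $\rho_D$ is exactly what inserts the line bundle $x^*\calO_X(D)$. Similarly, an $S$-point of $[\frb/B]_D$ is $(x,\calF^B,\phi^B)$, where $\calF^B$ is a $B$-torsor on $S$ and $\phi^B$ is a section of $\Ad(\calF^B)\otimes x^*\calO_X(D)$ valued in $\frb$. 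Under these descriptions the map $\pi$ sends $(x,\calF^B,\phi^B)$ to $(x,\calF^B\twtimes{B}G,\phi^B)$, where $\phi^B$ is viewed in $\frg$ via $\frb\hookrightarrow\frg$; and the evaluation map $\ev^{\Hit}$ sends $((\calE,\varphi),x)$ to the restriction $(\calE|_{\Gamma(x)},\varphi|_{\Gamma(x)})$ under the canonical identification $\Gamma(x)\simeq S$.

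Next I would assemble the fiber product. An $S$-point of $(\MHit\times X)\times_{[\frg/G]_D}[\frb/B]_D$ then consists of a Hitchin pair $(\calE,\varphi)$ over $S\times X$, a section $x\colon S\to X$, a $B$-torsor $\calF^B$ on $S$ with a compatible $\phi^B$ valued in $\frb$, and an isomorphism $\calF^B\twtimes{B}G\isom \calE|_{\Gamma(x)}$ intertwining $\phi^B$ with $\varphi|_{\Gamma(x)}$. The datum of $\calF^B$ together with this isomorphism is precisely a $B$-reduction $\calE^B_x$ of $\calE$ along $\Gamma(x)$, while the existence of the $\frb$-valued lift $\phi^B$ of $\varphi|_{\Gamma(x)}$ is exactly the compatibility condition $\varphi|_{\Gamma(x)}\in\cohog{0}{\Gamma(x),\Ad(\calE^B_x)\otimes x^*\calO_X(D)}$ imposed in Definition~\ref{def:par1}. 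So the data match quadruples $(x,\calE,\varphi,\calE^B_x)\in\Mpar(S)$, and one checks that this assignment is an equivalence of groupoids functorial in $S$.

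The only point requiring real care, and the main potential obstacle, is bookkeeping for the $D$-twist: one must verify that the $\GG_m$-torsor $\rho_D$ gets pulled back consistently on both sides of the square so that the induced line bundle on $\Gamma(x)$ agrees with $x^*\calO_X(D)$ in both $[\frg/G]_D$ and $[\frb/B]_D$. Once this is set up correctly, the morphisms $\pi_{\calM}$ and $\ev^{\parab}$ from $\Mpar$ in (\ref{d:2def}) factor through the fiber product by their very definitions, and the equivalence above shows that this factorization is an isomorphism, which is the Cartesian property claimed.
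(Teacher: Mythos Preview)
Your proposal is correct and is exactly the kind of functor-of-points verification the paper has in mind; the paper itself does not give a proof beyond asserting that ``it is easy to see'' the square is Cartesian, so your unpacking of $S$-points of $[\frg/G]_D$, $[\frb/B]_D$, and the fiber product, and matching them against Definition~\ref{def:par1}, is precisely the intended argument spelled out.
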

We can take this to be an alternative definition of $\Mpar$.

\subsubsection{The Hitchin base}
Recall that the Hitchin base space $\AHit$ is the affine space of all sections of $X\to\frc_D$. Here the twisting $\frc_D:=\rho_D\twtimes{\GG_m}\frc$ uses the weighted $\GG_m$-action on $\frc$ inherited from the $\GG_m$-action on $\frt$ by homotheties. Fixing homogeneous generators $f_1,\cdots,f_n$ of the coordinate ring $\Sym_k(\frt^*)^W$ of $\frc$, we can write
\begin{equation}\label{eq:AHit}
\AHit=\bigoplus_{i=1}^n \cohog{0}{X,\calO_X(d_iD)}. 
\end{equation}
where $d_i$ is the degree of $f_i$.

The morphism $[\frg/G]_D\to\frc_D$ induces the {\em Hitchin fibration} (see \cite[4.1]{NgoFL})
\begin{equation}\label{eq:fHit}
f^{\Hit}:\MHit\to\AHit.
\end{equation}

\begin{defn}\label{def:Mparfib}
The morphism
\begin{eqnarray*}
\fpar:\Mpar&\to&\AHit\times X\\
(x,\calE,\varphi,\calE^B_x)&\mapsto&(f^{\Hit}(\calE,\varphi),x).
\end{eqnarray*}
is called the {\em parabolic Hitchin fibration}. The geometric fibers of the morphism $\fpar$ are called {\em parabolic Hitchin fibers}.
\end{defn}

\begin{defn}\label{def:unicam} The {\em universal cameral cover}, or the {\em enhanced Hitchin base} $\tcA$ is defined by the Cartesian diagram
\begin{equation}\label{d:tcA}
\xymatrix{\tcA\ar[r]^{\ev}\ar[d]^{q} & \frt_D\ar[d]^{q_{\frt}}\\
\AHit\times X\ar[r]^{\ev} & \frc_D}
\end{equation}
For $a\in\AHit(k)$, the pre-image $X_a:=q^{-1}(\{a\}\times X)$ is called the {\em cameral curve} $X_a$ corresponding to $a$. The projection $X_a\to X$ is denoted by $q_a$.
\end{defn}

The commutative diagram
\begin{equation}\label{d:gralt}
\xymatrix{[\frb/B]\ar[r]\ar[d]^{\pi} & \frt\ar[d]^{\sslash W}\\
[\frg/G]\ar[r]^{\chi} & \frc}
\end{equation}
together with the Lem. \ref{l:par2} gives a morphism
\begin{equation}\label{eq:tilf}
\tilf:\Mpar\to\tcA.
\end{equation}
which we call the {\em enhanced parabolic Hitchin fibration}.

\begin{lemma}\label{l:tcAsm}
Recall that $\deg(D)\geq2g_X$. Then $\tcA$ is smooth.
\end{lemma}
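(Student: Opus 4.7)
The plan is to show smoothness of $\tcA$ by exploiting the Cartesian square (\ref{d:tcA}). Since smoothness is stable under base change, it suffices to show that one of the two morphisms $\ev: \AHit\times X \to \frc_D$ or $q_{\frt}: \frt_D \to \frc_D$ is smooth; the latter is only finite flat (ramified along the discriminant), so the natural target is $\ev$. Once $\ev$ is shown to be smooth, then $\tcA \to \frt_D$ is smooth as a base change, and since $\frt_D = \frt \otimes_{\calO_X} \calO_X(D)$ is a vector bundle on $X$ and hence smooth over $k$, it follows that $\tcA$ is smooth.

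The key step is therefore the smoothness of $\ev: \AHit\times X \to \frc_D$. I will work pointwise on tangent spaces at a point $(a,x) \in \AHit \times X$. The vector bundle structure $\frc_D \to X$ gives a canonical short exact sequence
\begin{equation*}
0 \to \frc_{D,x} \to T_{\ev(a,x)}\frc_D \to T_x X \to 0.
\end{equation*}
The tangent map $d\ev$ projects onto $T_x X$ via the variation of $x$ alone, so it suffices to verify that the restriction of $d\ev$ to the fiber direction $T_a \AHit = \AHit$ (since $\AHit$ is a vector space) surjects onto $\frc_{D,x} \cong \bigoplus_{i=1}^n \calO_X(d_iD)|_x$. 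This factor-by-factor reduces to showing that the evaluation map $\cohog{0}{X,\calO_X(d_iD)} \to \calO_X(d_iD)|_x$ is surjective for every $x \in X$ and every $i$.

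The latter surjectivity follows from the cohomology long exact sequence associated with $0 \to \calO_X(d_iD-x) \to \calO_X(d_iD) \to \calO_X(d_iD)|_x \to 0$ once we check that $\cohog{1}{X,\calO_X(d_iD-x)} = 0$. By Serre duality this vanishing holds when $\deg(d_iD - x) > 2g_X - 2$, i.e., $d_i \deg D \geq 2g_X$. Since $d_i \geq 1$ and our standing hypothesis is $\deg D \geq 2g_X$, this is satisfied for every $i$, completing the argument.

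The main obstacle is really only a bookkeeping one: one must be slightly careful with the weighted $\GG_m$-action used in forming $\frc_D$ to see that $\frc_D$ is indeed (the total space of) the vector bundle $\bigoplus_i \calO_X(d_iD)$, so that the fiber $\frc_{D,x}$ is literally $\bigoplus_i \calO_X(d_iD)|_x$ and the above global-generation argument applies directly. Once this identification is made, the proof is a short combination of base change and the standard Riemann--Roch criterion for point-separation of a line bundle.
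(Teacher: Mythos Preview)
Your proof is correct and follows essentially the same route as the paper: both show that $\ev:\AHit\times X\to\frc_D$ is smooth (via global generation of the line bundles $\calO_X(d_iD)$), base change to conclude $\tcA\to\frt_D$ is smooth, and finish using smoothness of $\frt_D$. The only difference is presentational: the paper invokes the standard fact that a line bundle of degree $\geq 2g_X$ is globally generated and phrases the conclusion as ``surjective bundle map over $X$, hence smooth,'' while you unpack this into the tangent-space exact sequence and the Serre-duality vanishing of $\cohog{1}{X,\calO_X(d_iD-x)}$.
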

\begin{proof}
The affine space bundle $\frc_D$ over $X$ is non-canonically a direct sum of line bundles of the form $\calO(eD)$, each having degree $\geq2g_X$ since $e\geq1$. Therefore the line bundle $\calO(eD)$ is globally generated. Hence the evaluation map
\begin{equation*}
\AHit=\cohog{0}{X,\frc_D}\times X\to\frc_D
\end{equation*}
is a surjective bundle map over $X$ (the source is a trivial vector bundle), therefore it is smooth. This shows that the evaluation map $\AHit\times X\to \frc_D$ is smooth. Base change to $\frt_D$, we see that $\tcA\to\frt_D$ is smooth, hence $\tcA$ is smooth because $\frt_D$ is.
\end{proof}

We summarize the various stacks we considered into a commutative
diagram
\begin{equation}\label{d:allplayers}
\xymatrix{\Mpar\ar[r]^(.4){\nu_{\calM}}\ar[dd]^{\ev^{\parab}} & \MHit\times_{\AHit}\tcA\ar[dr]\ar[rr]^{q_{\calM}}\ar[dd] & & \MHit\times X\ar[dr]^{f^{\Hit}\times \id_X}\ar'[d][dd]^{\ev^{\Hit}} &\\
& & \tcA\ar[rr]^(.3){q}\ar[dd]^(.7){\ev} & & \AHit\times X\ar[dd]^(.7){\ev}\\
[\frb/B]_D\ar[r]^{\nu} & [\frg/G]_D\times_{\frc}\frt\ar'[r]^(.7){q_{\frg}}[rr]\ar[dr] & & [\frg/G]_D\ar[dr]^{\chi} &\\
& & \frt_D\ar[rr]^{q_{\frt}} & & \frc_D}
\end{equation}
where the three squares formed by horizontal and vertical arrows are Cartesian.

\begin{remark}\label{rm:variousbases}
We make a few remarks about the various open subsets of $\AHit$ that will appear in the sequel:
\begin{equation*}
\AHit\supset\Ah\supset\Aa\supset\calA\supset\calA_0\supset\Ag.
\end{equation*}

In \cite[Def. 4.4]{NgoFib}, Ng\^o introduced the open subscheme $\Ah\subset\AHit$ consisting of those $a:X\to\frc_D$ which {\em generically} lies in the regular semisimple locus $\frc^{\rs}_D\subset\frc_D$. In \cite[6.1]{NgoFL}, he introduced the open subset $\Aa$, which we will recall in Def. \ref{def:ani}. We will define the subset $\calA$ in Rem. \ref{rm:largedeg}, and we will mostly be working over $\calA$. If $\textup{char}(k)=0$, $\calA=\Aa$, and it is expected that they are equal in general. The open subset $\calA_0$ will be introduced in Sec. \ref{ss:codim}. The open subset $\Ag$ introduced in \cite[4.6]{NgoFL} consists of those sections $a:X\to\frc_D$ which are transversal to the discriminant divisor in $\frc_D$.
\end{remark}

\begin{exam}\label{ex:gl}
We describe the parabolic Hitchin stack and the parabolic Hitchin fibration in concrete terms when $G=\GL(n)$. The stack $\Mpar_{\GL(n)}$ classifies the data
\begin{equation*}
(x;\calE_0\supset\calE_1\supset\cdots\supset\calE_{n-1}\supset\calE_n=\calE_0(-x);\varphi)
\end{equation*}
where $\calE_i$ are rank $n$ vector bundles on $X$ such that $\calE_i/\calE_{i+1}$ are skyscraper sheaves of length 1 supported at $x\in X$. The Higgs field $\varphi$ in this case can be interpreted as a map of coherent sheaves
\begin{equation*}
\varphi:\calE_0\to\calE_0(D)
\end{equation*}
such that $\varphi(\calE_i)\subset\calE_i(D)$ for $i=1,\cdots,n-1$.

In this case, $\AHit$ is the space of characteristic polynomials
\begin{equation*}
\AHit=\bigoplus_{i=1}^n\cohog{0}{X,\calO_X(iD)}.
\end{equation*}
For each $a=(a_1,\cdots,a_n)\in\AHit$, we can define the {\em spectral curve} $p_a:Y_a\to X$, here $Y_a$ is an embedded curve in the total space of the line bundle $\calO_X(D)$ defined by the equation
\begin{equation*}
\sum_{i=0}^na_it^{n-i}=0
\end{equation*}
where $a_0=1$. Let $a\in\Ah(k)$, then $Y_a$ is a reduced curve. In this case, we have a natural isomorphism $\MHit_a\cong\overline{\stPic}(Y_a)$, the latter being the compactified Picard stack classifying torsion-free coherent sheaves on $Y_a$ of generic rank 1. This isomorphism sends $\calF\in\overline{\stPic}(Y_a)$ to $(p_{a,*}\calF,\varphi)$ where the Higgs field $\varphi$ on $p_{a,*}\calF$ comes from the action of a direct summand $\calO_X(-D)\subset\calO_{Y_a}$ on $\calF$. 

For $x\in X$, the parabolic Hitchin fiber $\Mpar_{a,x}$ classifies the data
\begin{equation*} 
\calF_0\supset\calF_1\supset\cdots\supset\calF_{n-1}\supset\calF_n=\calF_0(-x)
\end{equation*}
where $\calF_i\in\overline{\stPic}(Y_a)$ and each $\calF_i/\calF_{i+1}$ has length 1. If $Y_a$ is \'etale over $x$, then the above data amounts to the same thing as $\calF_0\in\overline{\stPic}(Y_a)$ together with an ordering of the set $p_a^{-1}(x)$.
\end{exam}


\subsection{Symmetries on parabolic Hitchin fibers}\label{ss:sym}
In this subsection, we study the Picard stack action on the parabolic Hitchin stack.
 
\subsubsection{Facts about regular centralizers}
We first recall a few facts about the regular centralizer group schemes following \cite[Chapter 2]{NgoFL}.

Let $I_G\to\frg$ be the universal centralizer group scheme: for any $z\in\frg$, the fiber $I_{G,z}$ is the centralizer $G_z$ of $z$ in $G$. Let $\frg^{\reg}$ be the open subset of regular (i.e., centralizers have minimal dimension) elements in $\frg$. According to \cite[Lemme 2.1.1]{NgoFL}, there is a smooth group scheme $J\to\frc$, called the {\em group scheme of regular centralizers} together with an $\Ad(G)$-equivariant homomorphism
\begin{equation}\label{eq:Gcent}
\jmath_G:\chi^*J:=J_{\frg}\to I_G
\end{equation}
which is an isomorphism over $\frg^{\reg}$ (here $\chi:\frg\to\frc$ is the adjoint quotient). In other words, the stack $\BB J$ (over $\frc$) acts on the stack $[\frg/G]$, such that $[\frg^{\reg}/G]$ becomes a $J$-gerbe over $\frc$.

Let $J_{\frb}$ be the pull-back of $J$ to $\frb$. Let $I_B$ be the universal centralizer group scheme of the adjoint action of $B$ on $\frb$. We claim that

\begin{lemma}\label{l:bcent}
There is a natural homomorphism $\jmath_B:J_{\frb}\to I_B$ such that $\jmath_G|_{\frb}$ factors as $J_\frb\xrightarrow{\jmath_B} I_B\to I_G|_{\frb}$.
\end{lemma}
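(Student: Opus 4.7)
The plan is to prove the lemma by a density argument: verify the factorization on the regular semisimple locus $\frb^{\rs}:=\frb\cap\frg^{\rs}$, then extend by schematic density using the smoothness of $J$ over $\frc$.

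First I would set up the closed subscheme structure. Since $B\subset G$ is closed, the universal centralizer $I_B=\{(g,z)\in B\times\frb:\Ad(g)z=z\}$ is a closed subgroup scheme of $I_G|_{\frb}=\{(g,z)\in G\times\frb:\Ad(g)z=z\}$; equivalently it is the scheme-theoretic intersection $I_G|_\frb\cap(B\times\frb)$ inside $G\times\frb$. The question is therefore whether the closed subscheme
\[
Z:=\jmath_G|_\frb^{-1}(I_B)\subset J_{\frb}
\]
equals all of $J_\frb$.

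Next I would check the statement on $\frb^{\rs}$. For $b\in\frb^{\rs}$, the centralizer $G_b$ is a maximal torus $T_b\subset G$, and since $b\in\frb$ is regular semisimple its semisimple part (equal to $b$ modulo the nilpotent radical of $\frb$) is conjugate in $B$ to an element of $\frt^{\rs}$, so $T_b\subset B$. Thus $G_b=B_b$, and the restriction of $\jmath_G$ to $\frb^{\rs}$ already factors through $I_B$. Equivalently, $Z$ contains the open subscheme $J_\frb^{\rs}:=J_\frb\times_\frb\frb^{\rs}$.

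Now for the density step: by \cite[Lemme 2.1.1]{NgoFL}, $J\to\frc$ is smooth, so the base change $J_\frb\to\frb$ is smooth; in particular $J_\frb$ is reduced. Since $\frt^{\rs}\subset\frb^{\rs}$ shows that $\frb^{\rs}$ is a nonempty open subset of the irreducible scheme $\frb$, and $J_\frb\to\frb$ is flat, $J_\frb^{\rs}$ is open and schematically dense in $J_\frb$. The closed subscheme $Z\supset J_\frb^{\rs}$ of the reduced scheme $J_\frb$ must therefore equal $J_\frb$, producing the desired morphism $\jmath_B:J_\frb\to I_B$ over $\frb$ with $\jmath_G|_\frb=(I_B\hookrightarrow I_G|_\frb)\circ\jmath_B$.

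Finally, $\jmath_B$ is automatically a homomorphism of $\frb$-group schemes: the compatibility with the identity section, inversion, and multiplication holds after composing with the closed immersion $I_B\hookrightarrow I_G|_\frb$ (because $\jmath_G|_\frb$ is a homomorphism), and a morphism into a separated scheme is determined by its composition with a closed immersion. I do not expect a genuine obstacle here; the only subtlety is that one must use the smoothness of $J\to\frc$ (rather than just the pointwise factorization on $\frb^{\rs}$) in order to conclude that $J_\frb$ is reduced so the density argument goes through.
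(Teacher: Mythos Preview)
Your argument is correct, and it takes a genuinely different route from the paper's proof. The paper works over the larger open set $\frb^{\reg}:=\frb\cap\frg^{\reg}$, where by \cite[Lemme 2.4.3]{NgoFL} one has canonical identifications $J_x=I_{G,x}=I_{B,x}$; this defines $\jmath_B$ over $\frb^{\reg}$, and the extension to all of $\frb$ is obtained by a Hartogs-type argument using that $J_\frb\to\frb$ is smooth and $\frb\setminus\frb^{\reg}$ has codimension $\geq 2$ (as in \cite[Prop.~3.2]{NgoFib}). Your approach instead exploits that $I_B\hookrightarrow I_G|_\frb$ is a closed immersion: once $\jmath_G|_\frb$ lands in $I_B$ over the dense open $\frb^{\rs}$, schematic density in the reduced scheme $J_\frb$ forces the factorization everywhere. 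Your method is more elementary---it needs only density rather than the codimension-$2$ estimate, and avoids invoking the nontrivial fact that $I_{G,x}=I_{B,x}$ for regular (not just regular semisimple) $x\in\frb$. On the other hand, the paper's route yields the extra information that $\jmath_B$ is an isomorphism over $\frb^{\reg}$, parallel to the situation for $\jmath_G$.
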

\begin{proof}
For $x\in\frb^{\reg}:=\frb\cap\frg^{\reg}$, we have canonical identifications $J_x=I_{G,x}=I_{B,x}$ (cf. \cite[Lemme 2.4.3]{NgoFL}). This gives the desired map $\jmath_B$ over $\frb^{\reg}$. To extend $\jmath_B$ to the whole $\frb$, we can use the same argument as in \cite[Prop. 3.2]{NgoFib}, because $J_{\frb}$ is smooth over $\frb$ and $\frb-\frb^{\reg}$ has codimension at least two.
\end{proof}
As in the situation of $J_{\frg}$, we can say that the stack $\BB J$ acts on the stack $[\frb/B]$ over $\frc$.

Let $J_{\frt}$ be the pull-back of $J$ along $\frt\to\frc$. Recall the following fact

\begin{lemma}[\cite{NgoFL}, Prop. 2.4.2]\label{l:mapJtoT}
There is a $W$-equivariant homomorphism of group schemes over $\frt$:
\begin{equation*}
\jmath_T:J_{\frt}\to T\times\frt
\end{equation*}
which is an isomorphism over $\frt^{\rs}$. Here the $W$-structure on $J_{\frt}=J\times_{\frc}\frt$ is given by its left action on $\frt$, while the $W$-structure on $T\times\frt$ is given by the diagonal left action.
\end{lemma}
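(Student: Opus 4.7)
The plan is to build $\jmath_T$ by combining Lemma \ref{l:bcent} with the canonical projection $B\twoheadrightarrow T$, and then to check the two required properties separately: the isomorphism over $\frt^{\rs}$ is essentially by inspection of fibers, while the $W$-equivariance is the substantive point.

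First I would construct $\jmath_T$ as follows. Pulling back Lemma \ref{l:bcent} along the inclusion $\frt\hookrightarrow\frb$ yields a homomorphism $\jmath_B|_{\frt}:J_\frt\to I_B|_\frt$. Since $I_B|_\frt\subset B\times\frt$ is the closed subgroup scheme whose fiber over $t\in\frt$ is the stabilizer $B_t=\{b\in B:\Ad(b)t=t\}$, composing with the projection $B\twoheadrightarrow B/U=T$ (applied fiberwise over $\frt$) produces the desired homomorphism $\jmath_T:J_\frt\to T\times\frt$ of commutative group schemes over $\frt$.

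Next I would verify that $\jmath_T$ is an isomorphism over $\frt^{\rs}$. For $t\in\frt^{\rs}$, the element $t$ is regular (hence $\jmath_G$, and thus also $\jmath_B$, restricts to an isomorphism on the fiber) and semisimple in a Cartan, so $G_t=T$, forcing $B_t=T$. The composition $J_t\isom I_{B,t}=T\to T$ is then visibly the identity on the canonical copy of $T$. Since both sides of $\jmath_T$ are smooth commutative group schemes of the same relative dimension over $\frt^{\rs}$, this fiberwise isomorphism upgrades to an isomorphism of group schemes on $\frt^{\rs}$.

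The main obstacle will be the $W$-equivariance. The source $J_\frt$ carries a $W$-action coming from the $W$-action on $\frt$ (because $J$ descends to $\frc=\frt\sslash W$), while the target carries the diagonal $W$-action, and our construction used the $B$-level data, which is not itself $W$-invariant. I would argue as follows: since both $J_\frt$ and $T\times\frt$ are separated over $\frt$ and $\frt^{\rs}\subset\frt$ is schematically dense, it suffices to check $W$-equivariance on $\frt^{\rs}$. For $t\in\frt^{\rs}$ and $w\in W$, pick any lift $n_w\in N_G(T)$; then $\Ad(n_w)$ realizes the intrinsic isomorphism $J_t\isom J_{w\cdot t}$ coming from the descent of $J$ to $\frc$, and on the common subgroup $T=G_t=G_{w\cdot t}\subset G$ it acts by $w$. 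Under the identifications $\jmath_T|_t$ and $\jmath_T|_{w\cdot t}$ this matches precisely the $w$-action on the $T$-factor of $T\times\frt^{\rs}$, establishing $W$-equivariance on $\frt^{\rs}$ and therefore on all of $\frt$.
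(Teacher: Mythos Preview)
The paper does not supply its own proof of this lemma; it is stated as a citation of \cite[Prop.~2.4.2]{NgoFL} and used as a black box. So there is nothing in the paper to compare your argument against directly.

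That said, your argument is essentially correct and self-contained. One small point worth tightening: you invoke ``the inclusion $\frt\hookrightarrow\frb$'', but in the paper's conventions $T$ is the universal Cartan $B/U$, so $\frt=\frb/\mathfrak{u}$ is a quotient rather than a subalgebra. To make your construction literal you should choose a maximal torus $T_0\subset B$, identify $\frt_0\cong\frt$, and then observe that the resulting $\jmath_T$ is independent of this choice (two such tori are $U$-conjugate, and conjugation by $U$ is trivial on the quotient $T$). With that caveat, your construction via $\jmath_B$ followed by $B\twoheadrightarrow T$ is perfectly legitimate, and your density argument for $W$-equivariance is the standard and correct way to handle it.

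For comparison, Ng\^o's original argument in \cite{NgoFL} proceeds differently: he identifies $J$ with the scheme of $W$-invariants in the Weil restriction $\pi_*(T\times\frt)$ along $\pi:\frt\to\frc$, and then $\jmath_T$ is simply the counit $\pi^*\pi_*(T\times\frt)\to T\times\frt$ restricted to the $W$-invariant part. In that description the $W$-equivariance is manifest from the outset. Your route trades that structural transparency for a more hands-on construction that fits naturally with the paper's prior Lemma~\ref{l:bcent}; both reach the same conclusion.
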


Moreover, $J$ carries a natural $\GG_m$-action such that $J\to\frc$ is $\GG_m$-equivariant, therefore it makes sense to twist $J$ by the $\GG_m$-torsor $\rho_D$ over $X$ and get $J_D\to\frc_D$.

\subsubsection{The Picard stack $\calP$}
Recall from \cite[4.3.1]{NgoFL} that we have a Picard stack $g:\calP\to\AHit$ whose fiber $\calP_a$ over $a:S\to\AHit$ (viewed as a morphism $a:S\times X\to\frc_D$) classifies $J_a:=a^*J_D$-torsors on $S\times X$. According to \cite[Prop. 5.2]{NgoFib}, $\calP$ is smooth over $\Ah$. Since $\BB J_D$ acts on $[\frg/G]_D$, $\calP$ acts on $\MHit$ preserving the base $\AHit$.

There is an open dense substack $\MHitreg\subset\MHit$ parametrizing those $(E,\varphi):X\to[\frg/G]_D$ which land entirely in $[\frg^{\reg}/G]_D$. Since $[\frg^{\reg}/G]_D$ is a $\BB J_D$-gerbe over $\frc_D$, $\MHitreg$ is a $\calP$-torsor over $\AHit$.

Lem. \ref{l:bcent} implies that $\BB J_D$ acts on $[\frb/B]_D$, compatible with its action on $[\frg/G]_D$, and preserving the morphism $[\frb/B]_D\to\frt_D$. Using the moduli interpretations of $\Mpar$ and $\calP$, we get

\begin{lemma}\label{l:pretcA}
The Picard stack $\calP$ acts on $\Mpar$. The action is compatible with its action on $\MHit$ and preserves the enhanced parabolic Hitchin fibration $\tilf:\Mpar\to\tcA$. In other words, if we let $\tcP=\tcA\times_{\AHit}\calP$, viewed as a Picard stack over $\tcA$, then $\tcP$ acts on $\Mpar$ over $\tcA$.
\end{lemma}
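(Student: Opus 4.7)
The main input is Lemma \ref{l:bcent}, which refines the central homomorphism $\jmath_G: J_\frg \to I_G$ through $\jmath_B: J_\frb \to I_B$. The plan is to parallel the construction of the $\calP$-action on $\MHit$: produce an action of $\BB J$ on $[\frb/B]$ compatible with the Grothendieck resolution $\pi$, twist everything by $\rho_D$, then use the Cartesian description of $\Mpar$ in Lemma \ref{l:par2} to transfer the action.

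First, exactly as in the construction of the $\BB J$-action on $[\frg/G]$ via $\jmath_G$, the homomorphism $\jmath_B$ yields an action of $\BB J$ on $[\frb/B]$ over $\frc$: an $S$-point $(\calE^B,\varphi^B)$ of $[\frb/B]$ determines a characteristic map $a: S \to \frc$, and a $J_a$-torsor $Q$ twists $\calE^B$ via $\jmath_B$ into a new $B$-torsor on which $\varphi^B$ canonically descends, since $\jmath_B$ lands in centralizers. The factorization of $\jmath_G|_\frb$ through the inclusion $I_B \hookrightarrow I_G|_\frb$ from Lemma \ref{l:bcent} makes $\pi$ equivariant for this action and the analogous action on $[\frg/G]$. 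Twisting by $\rho_D$ gives compatible $\BB J_D$-actions on $[\frb/B]_D$ and $[\frg/G]_D$. Via the Cartesian square (\ref{d:2def}) presenting $\Mpar$ as $(\MHit \times X) \times_{[\frg/G]_D} [\frb/B]_D$, the universal property combines the $\calP$-action on $\MHit \times X$ (trivial on $X$) and the $\BB J_D$-action on $[\frb/B]_D$, which share a common image in $[\frg/G]_D$, into a $\calP$-action on $\Mpar$ extending the one on $\MHit$.

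The step where I expect the conceptual subtlety to lie is the equivariance of $\tilf: \Mpar \to \tcA$. By diagram (\ref{d:allplayers}), it suffices to show that $\Mpar \to [\frb/B]_D \to \frt_D$ is invariant under the $\calP$-action, or equivalently that the $\BB J_D$-action on $[\frb/B]_D$ is trivial when composed with the projection to $\frt_D$. The Cartan projection $\frb \to \frt$ is $B$-equivariant with $B$ acting on $\frt$ trivially (through the quotient $B \to T$ and the trivial $T$-action on $\frt$), so the induced morphism $[\frb/B] \to \frt$ simply projects the Higgs field from $\frb$ to $\frt$, making no reference to the underlying $B$-torsor. One might worry that a $\BB J$-twist modifies the $B$-torsor and therefore could alter the $\frt$-value, but this concern is circumvented by the observation that $\jmath_B$ factors through $I_B$: the $\BB J$-action preserves the Higgs field $\varphi^B$ canonically, hence preserves its projection to $\frt$. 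This establishes the required invariance, and the resulting factorization of the $\calP$-action gives the $\tcP = \tcA \times_{\AHit}\calP$ action on $\Mpar$ over $\tcA$.
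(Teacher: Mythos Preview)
Your proof is correct and follows essentially the same approach as the paper: use Lemma~\ref{l:bcent} to produce a $\BB J$-action on $[\frb/B]$ compatible with the action on $[\frg/G]$ and the projection to $\frt$, twist by $\rho_D$, and then invoke the moduli description of $\Mpar$ (equivalently, the Cartesian diagram~(\ref{d:2def})) to globalize. The paper states this more tersely, but your expanded discussion of why the $\BB J$-action preserves the map $[\frb/B]\to\frt$ is exactly the point the paper leaves implicit.
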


We describe this action on the level of $S$-points. Let $(a,x)\in\AHit(S)\times X(S)$ and $(x,\calE,\varphi,\calE^B_{x})\in\Mpar(S)$ be a point over it. An $S$-point of $\calP$ is the same as a $J_a$-torsor $Q^J$ over $S\times X$. Then the effect of the $Q^J$-action is
\begin{equation}\label{eq:Pactionex}
Q^J\cdot(x,\calE,\varphi,\calE^B_{x})=(x,Q^J\twtimes{J_a,\jmath_G}(\calE,\varphi),Q^J\twtimes{J_a,\jmath_B}\calE^B_x).
\end{equation}

\begin{exam}\label{ex:glP}
We continue with Example \ref{ex:gl} of $G=\GL(n)$. For $a\in\Ah(k)$, $\calP_a=\stPic(Y_a)$ is the Picard stack of line bundles on $Y_a$. The action of $\calL\in\calP_a$ on $\Mpar_{a,x}$ is given by
\begin{equation*}
\calL\cdot(\calF_0\supset\calF_1\supset\cdots)=(\calL\otimes\calF_0\supset\calL\otimes\calF_1\supset\cdots)
\end{equation*}
where the tensor products are over $\calO_{Y_a}$. 
\end{exam}

\begin{lemma}\label{l:regreg} Let $\Mparreg\subset\Mpar$ be the preimage of $\MHitreg$ under the forgetful morphism $\Mpar\to\MHit$. Then $\Mparreg$ is a torsor under $\tcP$, and the morphism $\nu_{\calM}$ induces an isomorphism $\Mparreg\cong\MHitreg\times_{\calA}\tcA$.
\end{lemma}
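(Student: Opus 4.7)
The plan is to show that $\nu_{\calM}$ restricts to an isomorphism $\Mparreg\isom \MHitreg\times_{\calA}\tcA$, from which the torsor statement follows immediately using the fact (recalled in Sec.~\ref{ss:sym}) that $\MHitreg$ is a $\calP$-torsor over $\calA$. The key local input, which I would record first, is the classical assertion (in its $\rho_D$-twisted form) that the morphism $\nu:[\frb/B]_D\to[\frg/G]_D\times_{\frc_D}\frt_D$ appearing in (\ref{d:allplayers}) becomes an isomorphism when restricted to the regular locus:
\begin{equation*}
\nu:[\frb^{\reg}/B]_D\isom[\frg^{\reg}/G]_D\times_{\frc_D}\frt_D.
\end{equation*}
Untwisting, this is the standard property of Grothendieck's simultaneous resolution: for any regular $z\in\frg^{\reg}$, sending a Borel subalgebra $\frb'\ni z$ to the image of $z$ under $\frb'\twoheadrightarrow\frb'/[\frb',\frb']\cong\frt$ identifies the scheme of Borels containing $z$ with the scheme-theoretic fiber of $\frt\to\frc$ over $\chi(z)$.

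Granting this, chasing the Cartesian square (\ref{d:2def}) gives
\begin{equation*}
\Mparreg=(\MHitreg\times X)\times_{[\frg/G]_D}[\frb/B]_D=(\MHitreg\times X)\times_{[\frg^{\reg}/G]_D}[\frb^{\reg}/B]_D,
\end{equation*}
since by definition of $\MHitreg$ the evaluation map $\MHitreg\times X\to[\frg/G]_D$ factors through $[\frg^{\reg}/G]_D$. Substituting the local isomorphism above simplifies this to $(\MHitreg\times X)\times_{\frc_D}\frt_D$. The composite $\MHitreg\times X\xrightarrow{\ev^{\Hit}}[\frg/G]_D\to\frc_D$ further factors through $\AHit\times X\xrightarrow{\ev}\frc_D$, so the defining diagram (\ref{d:tcA}) of $\tcA$ yields
\begin{equation*}
\Mparreg\cong\MHitreg\times_{\AHit}\tcA=\MHitreg\times_{\calA}\tcA,
\end{equation*}
and unwinding the construction identifies this isomorphism with $\nu_{\calM}|_{\Mparreg}$.

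Finally, since $\MHitreg$ is a $\calP$-torsor over $\calA$, base change makes $\MHitreg\times_{\calA}\tcA$ a torsor over $\tcA$ under $\calP\times_{\calA}\tcA=\tcP$; transporting along the isomorphism above, $\Mparreg$ is a $\tcP$-torsor over $\tcA$. A brief check, using the moduli description of the action in (\ref{eq:Pactionex}) together with the compatibility $\jmath_G|_{\frb^{\reg}}=\jmath_B$ from Lem.~\ref{l:bcent}, confirms that this torsor action agrees with the intrinsic $\tcP$-action on $\Mparreg$ from Lem.~\ref{l:pretcA}. The only genuine content is the local fact about the Grothendieck resolution in the first paragraph (a $\GG_m$-equivariant form of its smallness over the regular locus); everything else is a formal diagram chase combined with Ng\^o's torsor result for $\MHitreg$.
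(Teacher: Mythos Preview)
Your proof is correct and follows essentially the same route as the paper's: both restrict the Cartesian square (\ref{d:2def}) to $\MHitreg\times X$, invoke the Kostant fact that $[\frb^{\reg}/B]_D\to[\frg^{\reg}/G]_D\times_{\frc_D}\frt_D$ is an isomorphism, and conclude via the definition of $\tcA$ and the $\calP$-torsor property of $\MHitreg$. Your version is simply more explicit in the diagram chase and adds the compatibility check with the intrinsic $\tcP$-action, which the paper leaves implicit.
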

\begin{proof}
Restricting the Cartesian diagram (\ref{d:2def}) to $\MHitreg\times X$, we get
\begin{equation}\label{eq:regCar}
\xymatrix{\Mparreg\ar[rr]^{\ev^{\parab}}\ar[d]^{\pi_{\calM}} && [\frb^{\reg}/B]_D \ar[d]^{\pi}\ar[r] & \frt_D\ar[d]^{q_\frt}\\
\MHitreg\times X\ar[rr]^{\ev^{\Hit}} && [\frg^{\reg}/G]_D\ar[r] & \frc_D}.
\end{equation}
Notice that the square on the RHS above is also Cartesian, by a well-known result of Kostant in \cite{Kos}. The outer Cartesian square of (\ref{eq:regCar}) implies that $\nu_{\calM}:\Mparreg\cong\MHitreg\times_{\calA}\tcA$ is an isomorphism. Since $\MHitreg$ is a $\calP$-torsor, $\Mparreg$ is a $\tcP$-torsor.
\end{proof}

\begin{cons}\label{cons:TtorsorP} We will construct a tautological $T$-torsor $\calQ^T$ over $\tcP$. For any $a\in\AHit(S)$ and any $J_a$-torsor $Q^J$ over $S\times X$, the pull-back $q_a^*Q^J$ is a $q_a^*J_a$-torsor over the cameral curve $X_a$. By Lem. \ref{l:mapJtoT}, we have a natural homomorphism of group schemes over $X_a$:
\begin{equation*}
\jmath_a:q_a^*J_a\to T\times X_a.
\end{equation*}
Therefore we can form the induced $T$-torsor $Q^T:=q_a^*Q^J\twtimes{q_a^*J_a,\jmath_a}T$ over $X_a$. Since $\jmath_a$ is $W$-equivariant, $Q^T$ carries a strong $W$-equivariant structure. Let $\stPic_{T}(\tcA/\AHit)^W$ be the stack of strong $W$-equivariant objects in $\stPic_{T}(\tcA/\AHit)$ (see Sec. \ref{not:gen} for the notation). The assignment $Q^J\mapsto Q^T$ gives a morphisms of Picard stacks over $\AHit$ and $\tcA$:
\begin{eqnarray}\label{eq:PtoWPic}
\jmath_{\calP}&:&\calP\to\stPic_{T}(\tcA/\AHit)^W;\\
\jmath_{\tcP}&:&\tcP\to\tcA\times_{\AHit}\stPic_T(\tcA/\AHit)^W.
\end{eqnarray}
Over $\tcA\times_{\AHit}\stPic_T(\tcA/\AHit)^W$ we have the usual Poincar\'e $T$-torsor $\Poin^T$. Then we define
\begin{equation*}
\calQ^T:=\jmath_{\tcP}^*\Poin^T.
\end{equation*}
\end{cons}

Note that $\Mpar$ also naturally sits over $\BB T$ via $\Mpar\xrightarrow{\ev^{\parab}}[\frb/B]_D\to\BB T$, which gives a $T$-torsor $\calL^T$ over $\Mpar$.

\begin{lemma}\label{l:tcPaction}
There is a natural 2-morphism making the following diagram commutative:
\begin{equation}\label{d:tcPact}
\xymatrix{\tcP\times_{\tcA}\Mpar\ar[rr]^{\act}\ar@<-4ex>[d]_{\calQ^T}\ar@<3ex>[d]^{\calL^T} & & \Mpar\ar[d]^{\calL^T}\\
\BB T\times\BB T\ar[rr]^{\textup{mult}} & & \BB T}
\end{equation} 
Here ``$\act$'' is the action map and ``\textup{mult}'' stands for the multiplication on the Picard stack $\BB T$ induced from the multiplication on $T$.
\end{lemma}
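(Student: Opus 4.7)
The commutativity of diagram (\ref{d:tcPact}) amounts to producing a canonical isomorphism of $T$-torsors on $\tcP\times_\tcA\Mpar$:
\begin{equation*}
\alpha:\act^*\calL^T\isom \textup{pr}_{\tcP}^*\calQ^T\otimes_T\textup{pr}_{\Mpar}^*\calL^T.
\end{equation*}
My plan is to construct $\alpha$ fiberwise-naturally by unwinding the definition of the action, and then to reduce the required compatibility to a single identity of homomorphisms of group schemes over $\frb$.

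First, apply (\ref{eq:Pactionex}): at a point $((a,x,\tilde x),Q^J,m)$ of $\tcP\times_\tcA\Mpar$ with $m=(x,\calE,\varphi,\calE^B_x)$, the $B$-reduction of $\act(Q^J,m)$ at $x$ is $(Q^J|_x)\twtimes{J_{a,x},\jmath_B|_{\varphi(x)}}(\calE^B_x|_x)$. Since $\jmath_B$ factors through $I_B\hookrightarrow B$, the induced action of $J_{a,x}$ on the $T$-torsor $\calE^B_x|_x/[B,B]$ factors through $\overline{\jmath}_B:=(B\twoheadrightarrow T)\circ\jmath_B$, yielding a canonical splitting
\begin{equation*}
\act^*\calL^T\cong\mathcal{S}\otimes_T\textup{pr}_{\Mpar}^*\calL^T,\qquad \mathcal{S}|_{((a,x,\tilde x),Q^J,m)}=(Q^J|_x)\twtimes{J_{a,x},\overline{\jmath}_B|_{\varphi(x)}}T.
\end{equation*}
By Construction \ref{cons:TtorsorP}, $\textup{pr}_\tcP^*\calQ^T$ at the same point equals $(Q^J|_x)\twtimes{J_{a,x},\jmath_T|_{\tilde x}}T$. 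Since the lift $\tilde x\in\frt_D$ attached to $m$ is, by construction, the image of $\varphi(x)\in\frb_D$ under the projection $\frb_D\to\frt_D$ (this is precisely how $\tilf$ factors $\fpar$ through $\tcA$, via diagram (\ref{d:gralt})), the identification $\mathcal{S}\cong\textup{pr}_\tcP^*\calQ^T$ reduces to the pointwise identity
\begin{equation}\label{eq:tcPkey}
\overline{\jmath}_B=\jmath_T\circ p \colon J_\frb=p^*J_\frt\to T\times\frb,
\end{equation}
where $p:\frb\to\frt$ is the canonical projection.

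The verification of (\ref{eq:tcPkey}) is the one substantive point. Both sides are morphisms of smooth group schemes over the smooth irreducible base $\frb$ targeting the separated scheme $T\times\frb$, so agreement on a dense open propagates to all of $\frb$. On the regular semisimple locus $\frb^\rs:=\frb\cap\frg^\rs$, for any $z\in\frb^\rs$ the centralizer $I_{G,z}=I_{B,z}$ is a maximal torus of $B$ that maps isomorphically onto the universal Cartan $T$ via $B\twoheadrightarrow T$; under $\jmath_G$ (an iso on regular elements) this produces the canonical identification $J_{\chi(z)}\isom T$, which by Lem. \ref{l:mapJtoT} is exactly $\jmath_T|_{p(z)}$. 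Hence both sides of (\ref{eq:tcPkey}) coincide on $\frb^\rs$, and therefore everywhere. Combining with the splitting above yields the desired 2-morphism $\alpha$. The main obstacle is this last compatibility of $\jmath_B$ and $\jmath_T$; once it is in place, everything else is an unwinding of the definitions and of the constructions of $\jmath_B$ (Lem. \ref{l:bcent}) and $\jmath_T$ (Lem. \ref{l:mapJtoT}).
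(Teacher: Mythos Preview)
Your proof is correct and follows essentially the same route as the paper's: both arguments unwind the action formula (\ref{eq:Pactionex}) pointwise to identify the $T$-torsor $\act^*\calL^T$ as the product of $\calQ^T$ and $\calL^T$. The only difference is that you isolate and explicitly verify the compatibility $\overline{\jmath}_B=\jmath_T\circ p$ (your equation (\ref{eq:tcPkey})) by a density argument on $\frb^{\rs}$, whereas the paper simply writes the resulting isomorphism $(\tilx^*Q^J\twtimes{\tilx^*J_a,\jmath_B}\calE^B_x)\twtimes{B}T\cong\tilx^*Q^J\twtimes{\tilx^*J_a,\jmath}\calE^T_x$ as evident; your added justification is a welcome clarification of the one nontrivial step.
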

\begin{proof}
Let $a\in\AHit(S)$, $Q^J\in\calP_a(S)$ be a $J_a$-torsor over $S\times X$ and  $(x,\calE,\varphi,\calE^B_{x})\in\Mpar(S)$ be a point over $a$, which also gives a point $\tilx\in X_a(S)$. By Construction \ref{cons:TtorsorP}, the fiber of $\calQ^T$ over the point $(\tilx,Q^J)\in\tcP(S)$ is the $T$-torsor
\begin{equation*}
Q^T_{\tilx}:=\tilx^*Q^J\twtimes{\tilx^*J_a,\jmath}T
\end{equation*}
over $S=\Gamma(\tilx)\subset X_a$. Here $\jmath:\tilx^*J_a\to T$ is induced from $\jmath_T$.

On the other hand, The fiber of $\calL^T$ over the point $(x,\calE,\varphi,\calE^B_x)$ is the $T$-torsor $\calE^T_x:=\calE^B_{x}\twtimes{B}T$ over $\Gamma(x)$. By the description of the $\calP$-action on $\Mpar$ given in (\ref{eq:Pactionex}), after twisting by $Q^J$, the $T$-torsor $\calE^T_x$ becomes the $T$-torsor
\begin{equation*}
(\tilx^*Q^J\twtimes{\tilx^*J_a,\jmath_B}\calE^B_x)\twtimes{B}T\cong\tilx^*Q^J\twtimes{\tilx^*J_a,\jmath}\calE^T_x=Q^T_{\tilx}\twtimes{T}\calE^T_x.
\end{equation*}
which is precisely the product of $Q^T_{\tilx}$ (the fiber of $\calQ^T$ at $(\tilx,Q^J)$) and $\calE^T_x$ (the fiber of $\calL^T$ at $(x,\calE,\varphi,\calE^B_x)$) under the multiplication on $\BB T$. This completes the proof.
\end{proof}

Finally we recall the following definition.

\begin{defn}[\cite{NgoFL}, Lemme 6.1.1]\label{def:ani}
The {\em anisotropic} open subset $\Aa\subset\Ah$ is the open locus of $a\in\Ah(k)$ where $\pi_0(\calP_a)$ is finite.
\end{defn}
Over $\Aa$, $\calP$ is Deligne-Mumford and of finite type.

\begin{remark}
Since we work with a constant group scheme $G$ over $k$, the anisotropic locus $\Aa$ is nonempty if and only if $G$ is semisimple. In general, if the constant group $G$ is replaced by a quasi-split reductive group scheme $H$ over $X$, the anisotropic locus $\Aa$ is nonempty if and only if the split center of $H$ is trivial.
\end{remark}


\subsection{The local-global product formula}\label{ss:locglob}
In this subsection, we will relate parabolic Hitchin fibers to products of affine Springer fibers.

\subsubsection{The local counterpart of $\Mpar$}
For each point $x\in X(k)$, denote the completed local ring of $X$ at $x$ and its field of fractions by $\hatO_x$ and $F_x$. Their spectra $\disk_x$ and $\pdisk_x$ are the formal disk and the punctured formal disk centered at $x$ (see notations in Sec. \ref{not:X}). Let $\gamma\in\frg(F_x)$. The Hitchin fibers have their local counterparts, the {\em affine Springer
fiber} $M^{\Hit}_x(\gamma)$ in the affine Grassmannian $\Grass_{G,x}=G(F_x)/G(\hatO_x)$. For the definition of $M^{\Hit}_{x}(\gamma)$, see \cite[3.2]{NgoFL}. 

Similarly, the parabolic Hitchin fibers have their local counterparts $M^{\parab}_x(\gamma)$ also called affine Springer fibers, which is a sub-ind-schemes of the affine flag variety $\Flag_{G,x}=G(F_x)/\bI_x$ ($\bI_x$ is the Iwahori subgroup of $G(\hatO_x)$ corresponding to $B$). The functor $M^{\parab}_x(\gamma)$ sends any scheme $S$ to the set of isomorphism classes of quadruples $(\calE,\varphi,\calE^B_x,\alpha)$ where
\begin{itemize}
\item $\calE$ is a $G$-torsor over $S\htimes\disk_x$;
\item $\varphi\in\cohog{0}{S\htimes\disk_x,\Ad(\calE)}$;
\item $\calE^B_x$ is a $B$-reduction along $S\times\{x\}$;
\item $\alpha$ is an isomorphism $(\calE,\varphi)|_{S\htimes\pdisk_x}\cong(\calE^{\triv},\gamma)$, where $\calE^{\triv}$ is the trivial $G$-torsor over $S\htimes\pdisk_x$.
\end{itemize}
For $\gamma\in\frg^{\rs}(F_x)$, the reduced structures $M^{\parab,\red}_x(\gamma)$ and $M^{\Hit,\red}_x(\gamma)$ are locally of finite type.

\subsubsection{The local counterpart of $\calP$} For $a\in\frc(\hatO_x)$, we also have the local counterpart $P_x(J_a)$ of the Picard stack $\calP$ over $\calA$. The functor $P_x(J_a)$ sends any scheme $S$ to the set of isomorphism classes of pairs $(Q^J,\tau)$ where
\begin{itemize}
\item $Q^J$ is a $J_a$ torsor over $S\htimes\disk_x$;
\item $\tau$ a trivialization of $Q^J$ over $S\htimes\pdisk_x$.
\end{itemize}

If $\chi(\gamma)=a$, then the group ind-scheme $P_x(J_a)$ acts on $M^{\Hit}_x(\gamma)$ and $M^{\parab}_x(\gamma)$.

Recall from \cite[2.2]{NgoFL} that if $D=2D'$, we have a global Kostant section 
\begin{equation}\label{eq:kos}
\epsilon:\AHit\to\MHit
\end{equation}
For $a\in\AHit(k)$, consider the Hitchin pair $\epsilon(a)=(\calE,\varphi)$. After trivializing $\calE|_{\disk_x}$ and choosing an isomorphism $\hatO_x(D_x)\cong\hatO_x$, we can identify $(\calE,\varphi)|_{\disk_x}$ with $(\calE^{\triv},\gamma_{a,x})$ for some element $\gamma_{a,x}\in\frg(\hatO_x)$ such that $\chi(\gamma)=a_x\in\frc(\hatO_x)$. Parallel to the product formula of Ng\^{o} in \cite[Prop. 4.13.1]{NgoFL}, we have

\begin{prop}[Product formula]\label{p:prod} Let $(a,x)\in\Ah(k)\times X(k)$ and let $U_a$ be the dense open subset $a^{-1}\frc^{\rs}_D$ of $X$. We have a {\em homeomorphism} of stacks:
\begin{equation}\label{eq:prod}
\calP_a\twtimes{P_x^{\red}(J_a)\times P'}\left(M^{\parab,\red}_x(\gamma_{a,x})\times M'\right)\to\Mpar_{a,x}.
\end{equation}
where
\begin{eqnarray*}
P'&=&\prod_{y\in X-U_a-\{x\}}P_y^{\red}(J_a);\\
M'&=&\prod_{y\in X-U_a-\{x\}}M^{\Hit,\red}_y(\gamma_{a,y}).
\end{eqnarray*}
\end{prop}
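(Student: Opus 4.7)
The plan is to reduce to Ng\^o's product formula \cite[Prop.~4.13.1]{NgoFL} for the (non-parabolic) Hitchin fibration, to which the parabolic statement differs only by the replacement of the local factor $M^{\Hit,\red}_x(\gamma_{a,x})$ by $M^{\parab,\red}_x(\gamma_{a,x})$. Concretely, Ng\^o's result provides a homeomorphism
\begin{equation*}
\Phi^{\Hit}:\calP_a\twtimes{P_x^{\red}(J_a)\times P'}\bigl(M^{\Hit,\red}_x(\gamma_{a,x})\times M'\bigr)\isom\MHit_a,
\end{equation*}
and forgetting the $B$-reduction at $x$ gives a forgetful morphism $\Mpar_{a,x}\to\MHit_a$ whose fiber over a Hitchin pair $(\calE,\varphi)$ is the set of $B$-reductions of $\calE|_x$ compatible with $\varphi|_x$, that is, the classical Springer fiber of $\varphi(x)\in\Ad(\calE)|_x$. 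In parallel, the obvious morphism $M^{\parab,\red}_x(\gamma_{a,x})\to M^{\Hit,\red}_x(\gamma_{a,x})$ has fibers of the same shape.

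First I would construct the map in (\ref{eq:prod}) explicitly by local gluing, mimicking the construction of $\Phi^{\Hit}$. Starting from the Kostant section $\epsilon(a)=(\calE_\epsilon,\varphi_\epsilon)$ from (\ref{eq:kos}), each local datum $m_y\in M^{\Hit,\red}_y(\gamma_{a,y})$ produces a modification of $(\calE_\epsilon,\varphi_\epsilon)$ along $\disk_y$ via its trivialization over $\pdisk_y$, and the datum $m_x\in M^{\parab,\red}_x(\gamma_{a,x})$ produces such a modification together with a $B$-reduction along $\{x\}$ compatible with the Higgs field. Twisting the resulting Hitchin pair by the global $J_a$-torsor $Q^J\in\calP_a$ via the action in Lem.~\ref{l:pretcA} produces a well-defined point of $\Mpar_{a,x}$. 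Equivariance with respect to the anti-diagonal action of $P_x^{\red}(J_a)\times P'$ is immediate from the definitions: a change of trivialization of $Q^J$ on $\disk_y$ is absorbed by the opposite action on $m_y$, and likewise at $x$.

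Then I would verify that this map, call it $\Phi^{\parab}$, is a homeomorphism by a fiber-product argument. Post-composition with the forgetful map $\Mpar_{a,x}\to\MHit_a$ recovers $\Phi^{\Hit}$. Moreover, the local morphism $M^{\parab,\red}_x(\gamma_{a,x})\to M^{\Hit,\red}_x(\gamma_{a,x})$ is $P_x^{\red}(J_a)$-equivariant and therefore descends to a morphism from the source of $\Phi^{\parab}$ to the source of $\Phi^{\Hit}$. It then suffices to show that the fiber of this induced morphism over any point matches the corresponding fiber of $\Mpar_{a,x}\to\MHit_a$. Both fibers are canonically identified with the Springer fiber at $\varphi(x)\in\Ad(\calE)|_x$: on the $\MHit_a$-side via the global $B$-reduction, and on the local side via a choice of Iwahori in $G(F_x)$ containing the relevant parahoric and compatible with $\gamma_{a,x}$. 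Combining these identifications, $\Phi^{\parab}$ is a fiberwise homeomorphism over the homeomorphism $\Phi^{\Hit}$, hence itself a homeomorphism.

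The main obstacle will be making the fiberwise identification canonical, i.e.\ checking that the Springer-fiber description of the fiber of $M^{\parab,\red}_x(\gamma_{a,x})\to M^{\Hit,\red}_x(\gamma_{a,x})$ at a local lattice representing $(\calE,\varphi)|_{\disk_x}$ matches the Springer-fiber description of the fiber of $\Mpar_{a,x}\to\MHit_a$ at $(\calE,\varphi)$, independently of auxiliary trivializations. This is a local computation comparing Iwahori-level affine Springer fibers with classical Springer fibers at the residue, and requires careful bookkeeping with the trivializations used to define $\gamma_{a,x}$. A second, inherited subtlety is that both constructions match only after passage to reduced structures (this is already the reason Ng\^o obtains a homeomorphism rather than an isomorphism), so all identifications must be made at the level of topological spaces or reduced substacks.
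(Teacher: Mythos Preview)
Your approach is correct. The paper's own proof is a single sentence deferring to Ng\^o: ``The proof is identical with that for Hitchin fibers (see \cite[Prop.~4.13.1]{NgoFL}),'' meaning one is expected to rerun Ng\^o's Beauville--Laszlo gluing argument verbatim with the extra $B$-reduction datum at $x$ carried along.

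Your route is a mild but genuine variant: rather than redoing Ng\^o's argument, you use his result as a black box and layer the parabolic structure on top via the forgetful maps $\Mpar_{a,x}\to\MHit_a$ and $M^{\parab,\red}_x(\gamma_{a,x})\to M^{\Hit,\red}_x(\gamma_{a,x})$, both of whose fibers are classical Springer fibers of $\varphi(x)$. This buys you a transparent picture of how the parabolic product formula sits over the Hitchin one, at the cost of the bookkeeping you flag (matching the two Springer-fiber descriptions canonically). The paper's approach buys brevity. One point to make explicit in your write-up: the compatibility of the $\calP_a$-action on $\Mpar_{a,x}$ with its action on $\MHit_a$ under the forgetful map is exactly what Lem.~\ref{l:bcent} and Lem.~\ref{l:pretcA} provide (the $J_a$-action preserves the $B$-reduction via $\jmath_B$); without this, the fiberwise comparison would not descend through the twisted product.
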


The proof is identical with that for Hitchin fibers (see \cite[Prop. 4.13.1]{NgoFL}).

\begin{cor}\label{c:dim}
For $a\in\Ah(k)$ and $\tilx\in X_a(k)$ with image $x\in X(k)$, we have the following equality of dimensions:
\begin{equation*}
\dim\Mpar_{a,\tilx}=\dim \Mpar_{a,x}=\dim \calP_a=\dim \MHit_a.
\end{equation*}
\end{cor}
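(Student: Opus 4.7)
The plan is to establish the three claimed equalities in succession, reducing each to either a formal fact or a classical dimension formula for affine Springer fibers.

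First, the equality $\dim\Mpar_{a,\tilx}=\dim\Mpar_{a,x}$ is immediate from the factorization $\fpar=q\circ\tilf$ in (\ref{eq:tilf}): the morphism $q:\tcA\to\AHit\times X$ is finite, being the base change of the finite $W$-cover $q_{\frt}:\frt_D\to\frc_D$ along $\ev:\AHit\times X\to\frc_D$. Hence $\Mpar_{a,x}=\bigsqcup_{\tilx'\in q_a^{-1}(x)}\Mpar_{a,\tilx'}$ is a finite disjoint union of closed substacks, each of the same dimension as $\Mpar_{a,x}$, which yields the first equality.

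Next, the equality $\dim\calP_a=\dim\MHit_a$ follows from the torsor structure noted in Section \ref{ss:sym}. Since $\MHitreg$ is a $\calP$-torsor over $\Ah$, its fiber $\MHitreg_a$ is a $\calP_a$-torsor, so $\dim\MHitreg_a=\dim\calP_a$. Because $a\in\Ah(k)$, the Higgs field of any $(\calE,\varphi)\in\MHit_a$ is generically regular semisimple, hence generically regular, so $\MHitreg_a\subset\MHit_a$ is open and dense. Therefore $\dim\MHit_a=\dim\MHitreg_a=\dim\calP_a$.

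For the remaining equality $\dim\Mpar_{a,x}=\dim\MHit_a$, compare the product formula of Proposition \ref{p:prod} with Ng\^o's original product formula \cite[Prop. 4.13.1]{NgoFL} for $\MHit_a$. Both are homeomorphisms realizing the corresponding Hitchin-type fiber as a stacky quotient of $\calP_a$ by the same local factors $P_y^{\red}(J_a)$ acting on $M^{\Hit,\red}_y(\gamma_{a,y})$ for $y\in X-U_a-\{x\}$. The only difference occurs at the distinguished point $x$, where $M^{\Hit,\red}_x(\gamma_{a,x})$ is replaced by $M^{\parab,\red}_x(\gamma_{a,x})$; the factor $P_x^{\red}(J_a)$ is present on both sides and contributes no net change in dimension (in particular when $x\in U_a$, both $M^{\Hit,\red}_x$ and $P_x^{\red}$ are $0$-dimensional). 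It follows that
\[
\dim\Mpar_{a,x}-\dim\MHit_a\;=\;\dim M^{\parab,\red}_x(\gamma_{a,x})-\dim M^{\Hit,\red}_x(\gamma_{a,x}).
\]
The right-hand side vanishes by the Kazhdan--Lusztig dimension formula \cite{KL88}, which gives the same value $\tfrac{1}{2}(d(\gamma_{a,x})-c(\gamma_{a,x}))$ for affine Springer fibers in both the affine flag variety and the affine Grassmannian, for any regular semisimple $\gamma_{a,x}\in\frg(F_x)$.

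The only non-formal input in the entire argument is this last step: the equality of dimensions of the two local affine Springer fibers via the Kazhdan--Lusztig formula. Everything else is a bookkeeping comparison of structural results already at our disposal.
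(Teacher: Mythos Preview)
Your main step---comparing the two product formulas and invoking the Kazhdan--Lusztig equality $\dim M^{\parab}_x(\gamma)=\dim M^{\Hit}_x(\gamma)=\dim P_x(J_a)$---is exactly the content of the paper's proof, and your bookkeeping there is fine. But the two auxiliary steps have gaps.

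In your first equality, the finiteness of $q$ only gives that $\Mpar_{a,x}$ is set-theoretically the union of the closed pieces $\Mpar_{a,\tilx'}$; it does \emph{not} by itself guarantee that every piece has the full dimension $\dim\Mpar_{a,x}$, which is precisely what you are trying to prove. The clean fix is to sandwich: once you know $\dim\Mpar_{a,x}=\dim\calP_a$, Lemma~\ref{l:regreg} says $\Mparreg_{a,\tilx}$ is a $\calP_a$-torsor, so $\dim\Mpar_{a,\tilx}\geq\dim\calP_a$, while the inclusion $\Mpar_{a,\tilx}\subset\Mpar_{a,x}$ gives the reverse inequality. So this equality should come last, not first.

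In your second equality, the sentence ``the Higgs field \ldots\ is generically regular, so $\MHitreg_a\subset\MHit_a$ is open and dense'' is a non-sequitur: every point of $\MHit_a$ having generically regular Higgs field does not mean any of them lies in $\MHitreg_a$, which requires regularity \emph{everywhere} on $X$. The density of $\MHitreg_a$ is a genuine theorem of Ng\^o (\cite[4.14]{NgoFL}); alternatively, you can bypass density entirely and get $\dim\MHit_a=\dim\calP_a$ directly from Ng\^o's product formula together with the same Kazhdan--Lusztig input you already use for $\Mpar_{a,x}$.
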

\begin{proof}
By Kazhdan-Lusztig \cite[Sec. 4, Cor. 2]{KL88}, we have equalities for the dimension of affine Springer fibers
\begin{equation*}
\dim M^{\parab}_x(\gamma)=\dim P_x(J_a)=\dim M^{\Hit}_x(\gamma)
\end{equation*}
where $\chi(\gamma)=a\in\frc(\hatO_x)$. Now the required statement follows from Prop. \ref{p:prod}.
\end{proof}


\subsection{Geometric properties of the parabolic Hitchin fibration}\label{ss:geom}
We first prove the smoothness of $\Mpar$.

\begin{prop}\label{p:smooth} Recall that $\deg(D)\geq2g_X$, then we have:
\begin{enumerate}
\item $\Mpar|_{\Ah}\to X$ is smooth;
\item $\Mpar|_{\Ah}$ is a smooth and equidimensional algebraic stack of dimension equal to $\dim\MHit+1$;
\item $\Mpar|_{\Aa}$ is a smooth Deligne-Mumford stack.
\end{enumerate}
\end{prop}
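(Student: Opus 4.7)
The strategy is to adapt Ng\^o's smoothness proof for $\MHit|_{\Ah}$ (see \cite{NgoFL}) to the parabolic setting, using the Cartesian square (\ref{d:2def}) of Lemma~\ref{l:par2}. Both $[\frg/G]_D$ and $[\frb/B]_D$ are smooth stacks --- the latter because $\tilg=G\times^B\frb$ is the total space of a vector bundle over $G/B$, hence smooth --- but the twisted Grothendieck simultaneous resolution $\pi:[\frb/B]_D\to[\frg/G]_D$ is \emph{not} smooth, since its fibers are Springer varieties of varying dimension. Consequently the smoothness of $\Mpar=(\MHit\times X)\times_{[\frg/G]_D}[\frb/B]_D$ is not automatic and must be established by a direct deformation-theoretic computation showing that the two maps to $[\frg/G]_D$ are in sufficiently general position over $\Ah$.

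At a point $m=(x,\calE,\varphi,\calE^B_x)\in\Mpar$, I would introduce the parabolic Hitchin complex $\calK^\bullet_m=[\calK^0_m\xrightarrow{\ad\varphi}\calK^1_m]$, where $\calK^0_m\subset\Ad(\calE)$ is the subsheaf of local sections whose value at $x$ lies in $\Ad(\calE^B_x)$, and $\calK^1_m\subset\Ad(\calE)(D)$ is the analogous parahoric modification. Unwinding the Cartesian square shows that the tangent to the fiber $\Mpar_x$ at $m$ is $\HH^1(X,\calK^\bullet_m)$ with obstruction $\HH^2(X,\calK^\bullet_m)$, and that $T_m\Mpar$ is an extension of $T_xX$ by $\HH^1(X,\calK^\bullet_m)$. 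The crux is to verify $\HH^2(X,\calK^\bullet_m)=0$ for $a\in\Ah$. By Serre duality its dual injects into $H^0$ of an explicit sheaf on $X$ whose degree is controlled by $\deg(D)-(2g_X-2)$, which is nonnegative by hypothesis; together with the generic regular semisimplicity of $\varphi$ for $a\in\Ah$ (which makes $\ad\varphi$ generically an isomorphism on its image), Ng\^o's vanishing argument carries over, with the local modification of the complex at the single point $x$ contributing only a bounded correction. A Riemann--Roch computation for $\calK^\bullet_m$ then yields $\dim\HH^1(X,\calK^\bullet_m)=\dim\MHit$, giving smoothness of $\Mpar|_{\Ah}\to X$ of constant relative dimension and consequently smoothness of $\Mpar|_{\Ah}$ of equidimension $\dim\MHit+1$. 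This handles (1) and (2).

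For (3), the forgetful morphism $\pi_\calM:\Mpar\to\MHit\times X$ is representable and proper, being the base change of $\pi:[\frb/B]_D\to[\frg/G]_D$, whose fibers are closed subvarieties of $G/B$ (namely the Springer fibers of $\varphi(x)$). Hence $\Aut(m)$ injects into $\Aut(\calE,\varphi)$, which is finite for $a\in\Aa$ by Ng\^o's result that $\MHit|_{\Aa}$ is Deligne--Mumford. Combined with (2), this gives that $\Mpar|_{\Aa}$ is a smooth Deligne--Mumford stack.

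The main obstacle is the $\HH^2$-vanishing step: one must carefully verify that the parabolic modification of the Hitchin complex at $x$ preserves the Serre-duality positivity estimate, and check that the hypothesis $\deg(D)\geq 2g_X$ --- stronger than the bare $2g_X-2$ needed for the naked Hitchin case --- absorbs the extra freedom of simultaneously deforming the marked point $x$ and the parabolic reduction $\calE^B_x$.
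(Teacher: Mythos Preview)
Your approach is essentially the same as the paper's: a deformation-theoretic computation of the obstruction space for the two-term complex $\calK=[\calF\xrightarrow{\ad\varphi}\calF(D)]$, where $\calF\subset\Ad(\calE)$ is the parahoric subsheaf at $x$, followed by Serre duality and comparison with Ng\^o's vanishing. The one concrete point you should nail down in the ``main obstacle'' is the precise mechanism: since $\calF^\vee\subset\Ad(\calE)^\vee(x)$, the Serre-dual obstruction $\calK'$ embeds into the corresponding kernel $\calK''$ for the \emph{ordinary} Hitchin stack $\MHit_{G,X,D-x}$, and Ng\^o's result gives $\cohog{0}{X,\calK''}=0$ exactly when $\deg(D-x)>2g_X-2$, i.e.\ when $\deg(D)\geq 2g_X$ --- this is the sharp reason for the strengthened degree hypothesis.
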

\begin{proof}
(1) To prove the smoothness, we do several steps of d\'{e}vissage and reduce to the proof of \cite[Th. 4.12.1]{NgoFL}. Let $R$ be an artinian local $k$-algebra and $I\subset R$ a square-zero ideal. Let $R_0=R/I$. Fix a point $x\in X(R)$ with image $x_0\in X(R_0)$ and a point $m_0=(x_0,\calE_0,\varphi_0,\calE^B_{x_0})\in\Mpar(R_0)$ over $x_0\in X(R_0)$. To establish the smoothness of $\Mpar\to X$, we need to lift this point to a point $(x,\calE,\varphi,\calE^B_{x})\in\Mpar(R)$. In other words, we have to find the dotted arrows in the following diagrams
\begin{equation*}
\hspace{-1cm}
\xymatrix{X_{R_0}\ar[r]^{m_0}\ar@{^{(}->}[d] & [\frg/G]_D\ar[d]\\
\XR\ar[r]^{pr_X}\ar@{.>}[ur]^{m} & X}
\hspace{3cm}
\xymatrix{\Spec R_0\ar[r]^{m_{x_0}}\ar@{^{(}->}[d] & [\frb/B]_D\ar[d]\\
\Spec R\ar@{.>}[ur]^{m_{x}}\ar[r]^{x} & X}
\end{equation*}
making the following diagram commutative:
\begin{equation}\label{d:cond}
\xymatrix{\Spec R\ar@{^{(}->}[d]^{(\id,x)}\ar@{.>}[r]^{m_x} & [\frb/B]_D\ar[d]^{\pi}\\
\XR\ar@{.>}[r]^{m} & [\frg/G]_D}
\end{equation}

For a morphism of stacks $\frX\to\frY$ which is unambiguous from the context, we write $L_{\frX/\frY}$ for its cotangent complex. The infinitesimal deformations $m$ of $m_0$ are controlled by the complex $\bR\Hom_{X_{R_0}}(m^*_0L_{[\frg/G]_D/X},I\otimes_k\calO_X)$; the infinitesimal deformations of $m_x$ of $m_{0,x}$ are controlled by the complex $\bR\Hom_{R_0}(m^*_{x_0}L_{[\frb/B]_D/X},I)$. The condition (\ref{d:cond}) implies that the infinitesimal liftings of $(x_0,\calE_0,\varphi_0,\calE^B_{x_0})\in\Mpar(R_0)$ to $\Mpar(R)$ over $x$ is controlled by the complex $\calK$ which fits into an exact triangle
\begin{eqnarray*}
\calK_I&\to&\bR\Hom_{X_{R_0}}(m^*_0L_{[\frg/G]_D/X},I\otimes_k\calO_X)\oplus\bR\Hom_{R_0}(m^*_{x_0}L_{[\frb/B]_D/X},I)\\
& \xrightarrow{(i^*,-\pi^*)} & \bR\Hom_{R_0}(i^*m_0^*L_{[\frg/G]_D/X},I)\to\calK_I[1]
\end{eqnarray*}
where $i$ is the embedding $\Gamma(x_0)\hookrightarrow X_{R_0}$ and $\pi^*$ is induced by natural map between cotangent complexes $\pi^*L_{[\frg/G]_D/X}\to L_{[\frb/B]_D/X}$. According to the calculation in \cite[4.12]{NgoFL}, we have
\begin{eqnarray*}
m^*_0L_{[\frg/G]_D/X}&=&[\Ad(\calE_0)^\vee\xrightarrow{\Ad(\varphi_0)}\Ad(\calE_0)^\vee(D)];\\
m^*_{x_0}L_{[\frb/B]_D/X}&=&[\Ad(\calE^B_{x_0})^\vee\xrightarrow{\Ad(\varphi_0(x_0))}\Ad(\calE^B_{x_0})^\vee\otimes i^*\calO_X(D)].
\end{eqnarray*}
Here the two-term complexes sit in degrees -1 and 0. we see that
\begin{equation*}
\calK_I\cong[\calF\otimes_{R_0}I\xrightarrow{\Ad(\varphi_0)}\calF(D)\otimes_{R_0}I]
\end{equation*}
where $\calF$ is the kernel of the surjective map
\begin{equation*}
\Ad(\calE_0)\twoheadrightarrow i_*i^*\Ad(\calE_0)\twoheadrightarrow i_*\left(i^*\Ad(\calE_0)/\Ad(\calE^B_{x_0})\right).
\end{equation*}
Since both the source and the target of the above surjection are flat over $R_0$, $\calF$ is also flat over $R_0$. Also, as a subsheaf of the vector bundle $\Ad(\calE_0)$ over $X_{R_0}$, $\calF$ is locally free over $\calO_X$.
 
The obstruction to the lifting lies in $\cohog{1}{X_{R_0},\calK_I}$. Writing $I$ as the quotient of a free $R_0$-module $R_0^{\oplus e}$ by a submodule $J$, we have a distinguished triangle
\begin{equation*}
\calK_J\to\calK_{R_0}^{\oplus e}\to\calK_I\to\calK_J[1].
\end{equation*}
The associated long exact sequence gives
\begin{equation*}
\cohog{1}{X_{R_0},\calK_{R_0}^{\oplus e}}\to \cohog{1}{X_{R_0},\calK_I}\to \cohog{2}{X_{R_0},\calK_J}=0.
\end{equation*}
Therefore it suffices to prove $\cohog{1}{X_{R_0},\calK}=0$ for $\calK=\calK_{R_0}=[\calF\xrightarrow{\Ad(\varphi_0)}\calF(D)]$. 

Let $\frm$ be the maximal ideal of $R_0$. Consider the $\frm$-adic filtration of the complex $\calK$ (which is finite since $R_0$ is artinian):
\begin{equation*}
\calK^n=[\frm^n\calF\xrightarrow{\Ad(\varphi_0)}\frm^n\calF(D)],\textup{for } n\geq0.
\end{equation*}
Since $\calF$ is flat over $R_0$, the associated graded pieces $\Gr^n\calK$ are of the form
\begin{equation}\label{eq:grk}
\Gr^n\calK=\frm^n/\frm^{n+1}\otimes[\calF/\frm\calF\xrightarrow{\Ad(\overline{\varphi_0})}\calF(D)/\frm\calF(D)]
\end{equation}
where $\overline{\varphi_0}$ is the reduction of $\varphi_0$ mod $\frm$.

To prove the vanishing of $\cohog{1}{X_{R_0},\calK}$, it suffices to prove the vanishing of each $\cohog{1}{X_{R_0},\Gr^n\calK}$. By the expression (\ref{eq:grk}), we may reduce to showing the vanishing of $\cohog{1}{X_{R_0},\calK}$ in the case $R_0$ is a field. We may assume $R_0=k$ otherwise make a base change for $X$ from $k$ to $R_0$ and the argument is the same. In this case, as in the proof of \cite[Th. 4.12.1]{NgoFL}, using Serre duality, we reduce to showing that $\cohog{0}{X,\calK'}=0$ where
\begin{equation*}
\calK'=\ker(\calF^\vee(-D)\otimes\omega_X\xrightarrow{\Ad(\varphi_0)}\calF^\vee\otimes\omega_X)
\end{equation*}
Here $\calF^\vee=\bR\underline{\Hom}_{X}(\calF,\calO_X)$ is a subsheaf of $\Ad(\calE_0)^\vee(x_0)$ (the sheaf of rational sections of $\Ad(\calE_0)^\vee$ with at most a simple pole at $x_0$). Therefore $\calK'$ is a subsheaf of
\begin{equation*}
\calK''=\ker(\Ad(\calE_0)^\vee(-D+x_0)\otimes\omega_X\xrightarrow{\Ad(\varphi_0)}\Ad(\calE_0)^\vee(x_0)\otimes\omega_X).
\end{equation*}
In \cite[Th. 4.12.1]{NgoFL}, Ng\^{o} proves that $\cohog{0}{X,\calK''}$, as the obstruction to the lifting problem for the Hitchin moduli stack $\MHit_{G,X,D-x_0}$, vanishes whenever $\deg(D-x_0)>2g_X-2$. In our case, $\deg(D)\geq2g_X$ so this condition holds, therefore $\cohog{0}{X,\calK''}=0$, hence $\cohog{0}{X,\calK'}=0$. This proves the vanishing of the obstruction group $\cohog{1}{X_{R_0},\calK_I}$ in general, and completes the proof of smoothness of $\Mpar\to X$.

(2) The relative dimension of $\Mpar\to X$ at a $k$-point $(x_0,\calE_0,\varphi_0,\calE^B_{x_0})$ is the Euler characteristic $\chi(X,\calK)$ of the complex $\cohog{*}{X,\calK}$. Recall that $\calK$ fits into the distinguished triangle
\begin{equation*}
\calK\to\tcK\to\calQ\to\calK[1]
\end{equation*}
where
\begin{eqnarray*}
\tcK&=&[\Ad(\calE_0)\xrightarrow{\Ad(\varphi_0)}\Ad(\calE_0)(D)];\\
\calQ&=&i_*[i^*\Ad(\calE_0)/\Ad(\calE^B_{x_0})\xrightarrow{\Ad(\varphi_0)}i^*\Ad(\calE_0)/\Ad(\calE^B_{x_0})\otimes i^*\calO_X(D)].
\end{eqnarray*}
It is clear that $\chi(X,\calQ)=0$. Therefore $\chi(X,\calK)=\chi(X,\tcK)$. But by the calculation in \cite[4.12]{NgoFL}, $\chi(X,\tcK)$ is the dimension of $\MHit_{G,X,D}$ at the $k$-point $(\calE_0,\varphi_0)$. Since $\MHit$ is equidimensional by \cite[4.4.6]{NgoFL}, $\Mpar\to X$ is also equidimensional of relative dimension equal to $\dim\MHit$.

(3) By \cite[Prop. 6.2.2]{NgoFL}, $\MHit|_{\Aa}$ is Deligne-Mumford. By Lem. \ref{l:par2}, $\pi_{\calM}:\Mpar\to\MHit\times X$ is schematic and of finite type because the Grothendieck simultaneous resolution $\pi$ is. Therefore $\Mpar|_{\Aa}$ is Deligne-Mumford.
\end{proof}

\begin{cor}\label{c:flat}
The parabolic Hitchin fibrations $\fpar:\Mpar|_{\Ah}\to\Ah\times X$ and $\tilf:\Mpar|_{\Ah}\to\tcA$ are flat. The fibers are local complete intersections. The restrictions of $\fpar$ and $\tilf$ to $\Aa$ are proper.
\end{cor}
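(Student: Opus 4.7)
The plan is to establish flatness by the ``miracle flatness'' criterion, deduce the local complete intersection property of fibers formally from smoothness of both source and target, and obtain properness by factoring through the Hitchin base and combining Ng\^{o}'s properness theorem with the properness of the Grothendieck simultaneous resolution.

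First I would verify flatness of $\fpar:\Mpar|_{\Ah}\to\Ah\times X$ via miracle flatness. By Proposition \ref{p:smooth}, $\Mpar|_{\Ah}$ is smooth (hence Cohen--Macaulay) of dimension $\dim\MHit+1$, while $\Ah\times X$ is smooth of dimension $\dim\AHit+1$. To apply the criterion I need every geometric fiber to be equidimensional of dimension $\dim\MHit-\dim\AHit$. By Corollary \ref{c:dim}, $\dim\Mpar_{a,x}=\dim\MHit_a$, and this equals $\dim\MHit-\dim\AHit$ by Ng\^{o}'s equidimensionality theorem for the Hitchin fibration over $\Ah$ (\cite[4.4.6]{NgoFL}). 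The same argument handles $\tilf:\Mpar|_{\Ah}\to\tcA$: by Lemma \ref{l:tcAsm}, $\tcA$ is smooth, and since it is a finite cover of $\AHit\times X$ it has dimension $\dim\AHit+1$; the fibers $\Mpar_{a,\tilx}$ have the correct dimension by the same Corollary \ref{c:dim}. Once flatness is established, the LCI property of fibers is formal: near any chosen point of the target, a regular system of parameters pulls back by flatness to a regular sequence on the smooth source $\Mpar$ whose vanishing locus cuts out the fiber, so the fibers are LCI inside a smooth ambient stack.

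For properness of $\fpar|_{\Aa}$, I would factor
\[
\fpar:\Mpar\xrightarrow{\pi_{\calM}}\MHit\times X\xrightarrow{f^{\Hit}\times\id_X}\AHit\times X.
\]
The second arrow is proper over $\Aa$ by Ng\^{o}'s properness theorem for the Hitchin fibration over the anisotropic locus \cite{NgoFL}. The first arrow $\pi_{\calM}$ is proper since by Lemma \ref{l:par2} it is a base change of $\pi:[\frb/B]_D\to[\frg/G]_D$, and the untwisted Grothendieck simultaneous resolution $\tilg=G\twtimes{B}\frb\to\frg$ is proper: it factors as the closed immersion $G\twtimes{B}\frb\hookrightarrow G/B\times\frg$, $[g,y]\mapsto(gB,\Ad(g)y)$, followed by the proper projection $G/B\times\frg\to\frg$. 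Properness is preserved under stack quotients, twisting by $\rho_D$, and base change, so $\pi_{\calM}$ is proper, and $\fpar|_{\Aa}$ is the composition of two proper maps.

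Finally, to deduce properness of $\tilf|_{\Aa}$ I would invoke the cancellation property: $\fpar=q\circ\tilf$ with $q:\tcA\to\AHit\times X$ finite, hence separated; then the properness of $q\circ\tilf$ together with the separatedness of $q$ forces $\tilf$ to be proper over the preimage of $\Aa$. The only step that draws on genuine input beyond formal manipulations is the fiber-dimension identity needed for miracle flatness, which rests on Ng\^{o}'s equidimensionality result for $f^{\Hit}$; everything else is a composition or base change of known proper, flat, or finite morphisms, so I do not expect any real obstacle.
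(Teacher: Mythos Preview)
Your proposal is correct and essentially coincides with the paper's proof: both invoke miracle flatness (Hartshorne III, Exercise 10.9) using Proposition~\ref{p:smooth}, Lemma~\ref{l:tcAsm}, Corollary~\ref{c:dim}, and Ng\^{o}'s fiber-dimension result, and both obtain properness of $\fpar|_{\Aa}$ by composing Ng\^{o}'s properness theorem with the properness of $\pi_{\calM}$ as a base change of the Grothendieck simultaneous resolution. The only cosmetic difference is for $\tilf$: you use cancellation along $q$, whereas the paper factors $\tilf$ as $\Mpar\to\MHit\times_{\calA}\tcA\to\tcA$ and observes directly that the first map is a base change of the proper map $[\frb/B]_D\to[\frg/G]_D\times_{\frc_D}\frt_D$; these are equivalent, since the latter properness is itself obtained by cancellation from the properness of $[\frb/B]_D\to[\frg/G]_D$ against the finite map $\frt_D\to\frc_D$.
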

\begin{proof} 
The source and the target of the maps $\fpar$ and $\tilf$ are smooth and equidimensional. In both cases, the relative dimensions are equal to $\dim\MHit-\dim\AHit$. By Cor. \ref{c:dim}, the dimension of the fibers of $\fpar$ and $\tilf$ are also equal to the dimension of Hitchin fibers, which, in turn, equals to $\dim\MHit-\dim\AHit$ by \cite[Prop. 4.14.4]{NgoFL}. Therefore by \cite[Ch. III, Exercise 10.9]{Hart}, both maps are flat and the fibers are local complete intersections.

By \cite[Prop. 6.1.5]{NgoFL}, $\MHit|_{\Aa}$ is proper over $\Aa$. By Lem. \ref{l:par2}, $\Mpar|_{\Aa}$ is proper over $\MHit|_{\Aa}\times X$ and $\MHit|_{\Aa}\times_{\Aa}\tcA^{\ani}$ because they are obtained by base change from the proper maps $[\frb/B]_D\to[\frg/G]_D$ and $[\frb/B]_D\to[\frg/G]_D\times_{\frc_D}\frt_D$. Therefore $\Mpar|_{\Aa}$ is proper over $\Aa\times X$ and $\tcA^{\ani}$.
\end{proof}

\begin{cor}
The pre-image $\Mpar_a$ of any $a\in\Ah(k)$ under the map $\Mpar\to\calA$ is reduced and flat over $X$.
\end{cor}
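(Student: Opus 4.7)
The flatness of $\Mpar_a\to X$ follows immediately by base change: this morphism $\Mpar_a\to\{a\}\times X\cong X$ is obtained from the flat parabolic Hitchin fibration $\fpar:\Mpar|_{\Ah}\to\Ah\times X$ (Cor.~\ref{c:flat}) by base change along the closed embedding $\{a\}\times X\hookrightarrow\Ah\times X$.

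For reducedness my plan is to apply Serre's criterion $R_0+S_1$. The fibers of $\Mpar_a\to X$ are local complete intersections by Cor.~\ref{c:flat}, and since the map is flat over the smooth curve $X$, the stack $\Mpar_a$ is itself a local complete intersection, hence Cohen-Macaulay. The $S_1$ condition is therefore automatic, and the task reduces to verifying $R_0$, i.e., generic reducedness.

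Generic reducedness will come from exhibiting a reduced dense open substack, namely $\Mparreg\cap\Mpar_a\subset\Mpar_a$. By Lem.~\ref{l:regreg}, this open substack is isomorphic to $\MHitreg_a\times X_a$. The first factor $\MHitreg_a$ is smooth, being a torsor under the smooth Picard stack $\calP_a$ (see \cite[Prop.~5.2]{NgoFib}). The second factor $X_a$ is reduced: it is a finite flat cover of $X$ of degree $|W|$ that is \'etale over the dense open $U_a\subset X$, hence Cohen-Macaulay and generically reduced. Over the perfect base field $k$, the product $\MHitreg_a\times X_a$ is then reduced. For density of $\Mparreg\cap\Mpar_a$ in $\Mpar_a$, note that by flatness of $\Mpar_a\to X$ over the irreducible curve $X$, every associated point of $\Mpar_a$ lies above the generic point $\eta\in X$, which belongs to $U_a$; over $\eta$ the projection $\Mpar_{a,\eta}\to\MHit_a\otimes_k k(\eta)$ is \'etale, since $\pi$ is \'etale over the regular semisimple locus of $[\frg/G]_D$. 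Density therefore reduces to the density of $\MHitreg_a$ in $\MHit_a$, which is part of Ng\^o's analysis of Hitchin fibers in \cite{NgoFL}. This last density claim is the main technical input; everything else in the proof is a more or less formal consequence of the geometric properties already established in Prop.~\ref{p:smooth}, Lem.~\ref{l:regreg}, and Cor.~\ref{c:flat}.
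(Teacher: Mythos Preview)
Your proof is correct, and the logical skeleton is the same as the paper's: $\Mpar_a$ is a local complete intersection (hence satisfies $S_1$, equivalently has no embedded components), so reducedness follows once one exhibits a reduced dense open substack. The difference is in the choice of that open. The paper restricts along the base curve, taking $\Mpar_a|_{U_a}\cong\MHit_a\times q_a^{-1}(U_a)$ (via the isomorphism $\nu_{\calM}$ over the regular semisimple locus) and invoking Ng\^o's result \cite[Corollaire 4.14.4]{NgoFL} that the Hitchin fiber $\MHit_a$ itself is reduced; density of $\Mpar_a|_{U_a}$ is then immediate from flatness over $X$. You instead restrict to the regular locus $\Mparreg_a\cong\MHitreg_a\times X_a$, using smoothness of $\MHitreg_a$ (as a $\calP_a$-torsor) together with a separate argument that $X_a$ is reduced. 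Your density step then needs the density of $\MHitreg_a$ in $\MHit_a$, which is exactly the ingredient Ng\^o uses to prove $\MHit_a$ reduced in the first place. So the two routes draw on the same well; the paper's is slightly more economical because it cites the finished statement about $\MHit_a$ rather than unpacking it, while yours is a bit more self-contained and makes the role of $\MHitreg$ explicit.
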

\begin{proof}
The flatness over $X$ follows by base changing $\fpar$ to $\{a\}\times X$. Over the open dense locus $U_a=a^{-1}(\frc^{\rs}_D)\subset X$, $q^{-1}(U_a)$ is smooth, and $\Mpar_a|_{U_a}\cong\MHit_a\times q^{-1}(U_a)$. Since $\MHit_a$ is reduced by \cite[Corollaire 4.14.4]{NgoFL}, $\Mpar_a|_{U_a}$ is also reduced. By flatness, $\Mpar_a|_{U_a}$ is dense in $\Mpar_a$. Moreover, $\Mpar_a$ is a local complete intersection by Cor. \ref{c:flat}, hence it does not admit embedded components. Therefore $\Mpar_a$ is also reduced.
\end{proof}

\begin{remark} The parabolic Hitchin fibers $\Mpar_{a,x}$ are not reduced in general. For example, suppose $q_a:X_a\to X$ is ramified over $x$ and $q^{-1}_a(x)$ consists of smooth points of $X_a$, then $\Mpar_{a,x}=\MHit_a\times q^{-1}_a(x)$ is not reduced because $q^{-1}_a(x)$ is not.
\end{remark}

\begin{remark} Although $\calP_a$ still has an open orbit in $\Mparreg_{a,\tilx}\subset\Mpar_{a,\tilx}$ (where $\tilx\in X_a$) which is a torsor under
$\calP_a$, this open set may not be dense in $\Mpar_{a,\tilx}$: there might be other irreducible components, as we will see in \cite[Sec. 4.1]{GSIII}.
\end{remark}


\subsection{Stratification by $\delta$ and codimension estimate}\label{ss:codim}

\subsubsection{The global $\delta$} Recall that in \cite[3.7 and 4.9]{NgoFL}, Ng\^{o} introduced local and global Serre invariants $\delta$. The global invariant is the dimension of the affine part of the Picard stack $\calP_a$. It defines an upper semi-continuous function
\begin{equation}\label{eq:defglobdel}
\delta:\Aa\to\ZZ_{\geq0}.
\end{equation}
For an integer $\delta\geq0$, let $\Aa_{\delta}$ be the $\delta$-level set of $\Aa$.

\begin{lemma}\label{l:regwhole}
The open subset $\Aa_0$ of $\Aa$ consists precisely of points
$a\in\Aa$ where $\MHitreg_a=\MHit_a$.
\end{lemma}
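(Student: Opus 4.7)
The plan is to reduce the claim to the equivalence ``$\delta(a)=0$ if and only if $\calP_a$ is proper,'' and then exploit the $\calP_a$-torsor structure on $\MHitreg_a$ (the Hitchin analog of Lem.~\ref{l:regreg}, recalled in Section~\ref{ss:sym}). The crucial input from \cite{NgoFL} is that for $a\in\Aa$, $\calP_a$ is Deligne-Mumford and of finite type (Def.~\ref{def:ani}), with $\pi_0(\calP_a)$ finite, and its neutral component sits in a Chevalley extension
\[
1 \to R_a \to \calP_a^0 \to A_a \to 1
\]
in which $R_a$ is affine of dimension $\delta(a)$ (this is essentially how $\delta$ is defined in \cite[4.9]{NgoFL}) and $A_a$ is an abelian variety. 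Consequently $\calP_a$ is proper if and only if $R_a$ is trivial, if and only if $\delta(a)=0$.

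For the forward direction, I would assume $\MHit_a=\MHitreg_a$. Then $\MHit_a$ is itself a $\calP_a$-torsor; by Cor.~\ref{c:flat} (or directly by \cite[Prop.~6.1.5]{NgoFL}) $\MHit_a$ is proper over $k$, so $\calP_a$ must be proper, and therefore $\delta(a)=0$.

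For the backward direction, I would assume $\delta(a)=0$. Then $\calP_a$ is proper, hence so is the $\calP_a$-torsor $\MHitreg_a$. As a proper open substack of the separated Deligne-Mumford stack $\MHit_a$, $\MHitreg_a$ is also closed in $\MHit_a$. By Cor.~\ref{c:dim} we have $\dim\MHitreg_a=\dim\calP_a=\dim\MHit_a$, and $\MHit_a$ is equidimensional by \cite[Prop.~4.14.4]{NgoFL}, so $\MHitreg_a$ is dense in $\MHit_a$. A dense clopen substack of $\MHit_a$ equals all of $\MHit_a$, giving $\MHitreg_a=\MHit_a$.

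The main obstacle is really the initial equivalence ``$\delta(a)=0 \iff \calP_a$ proper,'' which is Ng\^o's structural result for $\calP_a$ rather than anything new: the Chevalley decomposition identifies the affine part whose dimension is exactly $\delta(a)$. Once this is granted, the rest of the argument is formal: the $\calP_a$-torsor structure on $\MHitreg_a$ is built into the set-up of Section~\ref{ss:sym}, and the clopen-plus-dense step relies only on properness and equidimensionality of $\MHit_a$, both of which are available from the present Section~\ref{ss:geom} together with \cite{NgoFL}.
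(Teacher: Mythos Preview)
Your argument follows the same route as the paper's: both directions hinge on the equivalence ``$\delta(a)=0\iff\calP_a$ proper'' together with the $\calP_a$-torsor structure on $\MHitreg_a$ and the properness of $\MHit_a$. The forward direction is identical to the paper's.

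There is one genuine gap in your backward direction. From ``$\MHitreg_a$ is clopen in $\MHit_a$'' together with ``$\dim\MHitreg_a=\dim\MHit_a$ and $\MHit_a$ is equidimensional'' you cannot conclude density: a priori $\MHit_a$ could have several connected components, all of the same dimension, with $\MHitreg_a$ equal to only some of them. What is actually needed is the separate fact that $\MHitreg_a$ is \emph{dense} in $\MHit_a$ for every $a\in\Ah$; this is the input the paper invokes (implicitly citing Ng\^o --- it follows from the product formula and the density of the regular locus in each affine Springer fiber, cf.\ \cite[3.3, 4.14]{NgoFL}). Once you cite that, your clopen argument finishes the proof exactly as the paper does; but the dimension count alone does not substitute for it.
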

\begin{proof}
When $\delta(a)=0$, each component of the Picard stack $\calP_a$ is an abelian stack, and $\pi_0(\calP_a)$ is finite, hence $\calP_a$ is proper. Therefore $\MHitreg_a$, being a torsor under $\calP_a$, is also proper. Since $\MHitreg_a$ is open dense in $\MHit_a$, we must have $\MHitreg_a=\MHit_a$.

Conversely, if $\MHitreg_a=\MHit_a$, then $\MHitreg_a$ is proper because $\MHit_a$ is proper. Since $\MHitreg_a$ is a $\calP_a$-torsor, $\calP_a$ is also proper. Hence the affine part of $\calP_a$ is zero-dimensional, i.e., $\delta(a)=0$.
\end{proof}

\subsubsection{The local $\delta$} The local Serre invariant (\cite[3.7]{NgoFL}) assigns to every $(a,x)\in(\Aa\times X)(k)$ the dimension of the corresponding affine Springer fiber $M^{\Hit}_x(\gamma_{a,x})$ (see the discussion before Prop. \ref{p:prod} for the meaning of $\gamma_{a,x}$), or, equivalently, the dimension of the local symmetry group $P_x(J_a)$. It defines an upper semi-continuous function
\begin{equation}\label{eq:deflocdel}
\delta:\Aa\times X\to\ZZ_{\geq0}
\end{equation}
For an integer $\delta\geq0$, let $(\Aa\times X)_{\delta}$ be the $\delta$-level set. For any $a\in\Aa(k)$, we have
\begin{equation*}
\delta(a)=\sum_{x\in X(k)}\delta(a,x).
\end{equation*}

\begin{lemma}\label{l:nuisom}
The open subset $(\Aa\times X)_0$ is precisely the locus where $\nu_{\calM}:\Mpar|_{\Aa}\to\MHit|_{\Aa}\times_{\Aa}\tcA^{\ani}$ is an isomorphism.
\end{lemma}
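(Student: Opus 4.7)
The plan is to identify both $(\Aa\times X)_0$ and the isomorphism locus of $\nu_{\calM}$ with a third, common description: the locus of $(a,x)\in\Aa\times X$ such that $\varphi(x)\in\frg^{\reg}_D|_x$ for every Higgs pair $(\calE,\varphi)\in\MHit_a$. First I would establish that ``$\nu_{\calM}$ is an isomorphism at $(a,x)$'' matches this description. The Cartesian diagram \eqref{d:2def} exhibits $\nu_{\calM}$ as the base change of $\nu:[\frb/B]_D\to[\frg/G]_D\times_{\frc_D}\frt_D$ along $\ev^{\Hit}:\MHit\times X\to[\frg/G]_D$. By Kostant's theorem (used already in the proof of Lemma~\ref{l:regreg}), $\nu$ is an isomorphism exactly over the regular locus $[\frg^{\reg}/G]_D\times_{\frc_D}\frt_D$; elsewhere the fibers are positive-dimensional classical Springer fibers. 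Since $\nu_{\calM}$ is proper over $\Aa\times X$ by Cor.~\ref{c:flat}, it is an isomorphism in a neighborhood of $\fpar^{-1}(a,x)$ iff every geometric fiber there is a singleton, iff every $(\calE,\varphi)\in\MHit_a$ satisfies $\varphi(x)\in\frg^{\reg}_D|_x$.

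Next I would match $(\Aa\times X)_0$ with the same description. Using the product formula (Prop.~\ref{p:prod}), each $(\calE,\varphi)\in\MHit_a$ has local representative $g\in M^{\Hit,\red}_x(\gamma_{a,x})$ at $x$, and the value $\varphi(x)$ equals $\Ad(g^{-1})\gamma_{a,x}|_{t=0}$; thus ``every $\varphi(x)\in\frg^{\reg}$'' is equivalent to the local condition $M^{\Hit,\red}_x=M^{\Hit,\reg}_x$, where $M^{\Hit,\reg}_x\subset M^{\Hit,\red}_x$ is the open locus where $\gamma_g\in\frg^{\reg}(\hatO_x)$. The forward implication $\delta(a,x)=0\Rightarrow M^{\Hit,\red}_x=M^{\Hit,\reg}_x$ admits a clean proof by dimension count: if $\delta(a,x)=0$ then $\dim M^{\Hit,\red}_x=0$, hence by Kazhdan--Lusztig's identity $\dim M^{\parab,\red}_x=\dim M^{\Hit,\red}_x$ (cf.\ Cor.~\ref{c:dim}) also $\dim M^{\parab,\red}_x=0$; the forgetful map $M^{\parab,\red}_x\to M^{\Hit,\red}_x$ has fibers equal to classical Springer fibers $\calB_{\Ad(g^{-1})\gamma_{a,x}|_{t=0}}$, and the 0-dimensionality of the source forces each such Springer fiber to be 0-dimensional, hence each $\Ad(g^{-1})\gamma_{a,x}|_{t=0}$ regular.

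The converse, $\delta(a,x)>0\Rightarrow M^{\Hit,\red}_x\neq M^{\Hit,\reg}_x$, is the step I expect to be the main obstacle, since it cannot be extracted from dimension counts alone: Kazhdan--Lusztig only asserts $\dim M^{\parab,\red}_x=\dim M^{\Hit,\red}_x$, which is consistent even when $\delta>0$ and the forgetful map has only 0-dimensional fibers. One must appeal to the finer structural description of $M^{\Hit,\red}_x$ as the local compactified Jacobian of the spectral curve germ at $x$: the smooth locus of this compactified Jacobian coincides with the $P^{\red}_x(J_a)$-orbit $M^{\Hit,\reg}_x$, and the compactified Jacobian is smooth iff the germ is smooth iff $\delta(a,x)=0$. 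Any point in the complement $M^{\Hit,\red}_x\setminus M^{\Hit,\reg}_x$ then provides a $g$ whose associated Higgs pair has $\varphi(x)\notin\frg^{\reg}$, completing the equivalence and hence the lemma.
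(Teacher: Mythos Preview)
Your overall strategy---reduce via the Cartesian diagram and Kostant to the local statement $M^{\Hit,\red}_x(\gamma_{a,x})=M^{\Hit,\reg}_x(\gamma_{a,x})$, then prove this is equivalent to $\delta(a,x)=0$---is exactly the paper's. Your forward implication is a pleasant variant: instead of citing Ng\^o's \cite[Corollaire 3.7.2]{NgoFL} directly, you deduce it from $\dim M^{\parab,\red}_x=\dim M^{\Hit,\red}_x=0$ and the fact that the fibers of $M^{\parab}_x\to M^{\Hit}_x$ are classical Springer fibers, which are $0$-dimensional only over $\frg^{\reg}$. That works.

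The gap is in your converse. Your appeal to ``the compactified Jacobian of the spectral curve germ'' and the claim that its smooth locus equals $M^{\Hit,\reg}_x$ is specific to $\GL(n)$; for general $G$ there is no spectral curve, and the identification of $M^{\Hit}_x$ with a compactified Picard-type object is not available. Even in type $A$ the assertion ``smooth $\Leftrightarrow$ $\delta=0$'' would require its own argument. The paper's converse is both simpler and uniform in $G$: by Kazhdan--Lusztig there is a lattice $\Lambda\subset P_x(J_a)$ acting freely on $M^{\Hit,\red}_x$ with $\Lambda\backslash M^{\Hit,\red}_x$ projective and $\Lambda\backslash P^{\red}_x(J_a)$ affine. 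If $M^{\Hit,\red}_x=M^{\Hit,\reg}_x$, then $\Lambda\backslash M^{\Hit,\red}_x$ is a torsor under the affine group $\Lambda\backslash P^{\red}_x(J_a)$, hence affine; being simultaneously projective and affine forces it to be $0$-dimensional, i.e.\ $\delta(a,x)=0$. Replace your smoothness paragraph with this ``proper $+$ affine $\Rightarrow$ finite'' argument and the proof is complete.
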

\begin{proof}
We need to check that for a geometric point $(a,x)\in\Aa(k)\times X(k)$, $\nu_{\calM}$ is an isomorphism over $(a,x)$ if and only if $\delta(a,x)=0$. By the left-most Cartesian square of the diagram (\ref{d:allplayers}), $\nu_{\calM}$ is an isomorphism over a geometric point $(\calE,\varphi,x)\in\MHit\times X$ if and only if $\nu:[\frb/B]\to[\frg/G]\times_{\frc}\frt$ is an isomorphism over $\varphi(x)\in\frg$. This is equivalent to saying that $\varphi(x)\in\frg^{\reg}$, by the result of Kostant (cf. \cite{Kos}).

Now for $(a,x)\in\Aa\times X$, $\nu_{\calM}$ is an isomorphism over $(a,x)$ if and only if $\nu_{\calM}$ is an isomorphism over all $(\calE,\varphi,x)\in\MHit_a\times X$. The above argument shows that this is equivalent to requiring $\ev_x:\MHit_a\to[\frg/G]_D$ to land in $[\frg^{\reg}/G]$. Let $\gamma_{a,x}\in\frg(\hatO_x)$ be chosen as in Prop. \ref{p:prod}. Recall from \cite[3.3]{NgoFL} that we have an open sub-ind-scheme $M^{\Hit,\reg}_x(\gamma_{a,x})\subset M^{\Hit}_x(\gamma_{a,x})$ defined in a similar way as $\MHitreg_a$, and which is a torsor under $P_x(J_a)$. The above discussion implies that $\nu_{\calM}$ is an isomorphism over $(a,x)$ if and only if $M^{\Hit}_x(\gamma_{a,x})=M^{\Hit,\reg}_x(\gamma_{a,x})$. We have to show that this condition is equivalent to $\delta(a,x)=0$.

If $\delta(a,x)=0$, then $M^{\Hit}_x(\gamma_{a,x})=M^{\Hit,\reg}_x(\gamma_{a,x})$ by \cite[Corollaire 3.7.2]{NgoFL}. 

Conversely, suppose $M^{\Hit}_x(\gamma)=M^{\Hit,\reg}_x(\gamma)$. According to \cite[4.13]{NgoFL} and \cite{KL88}, there is a lattice subgroup $\Lambda\subset P_x(J_a)$ which acts freely on $M^{\Hit}_x(\gamma)$ such that $\Lambda\backslash M^{\Hit,\red}_x(\gamma)$ is a projective variety and $\Lambda\backslash P_x^{\red}(J_a)$ is an affine group scheme. Therefore $\Lambda\backslash M^{\Hit,\red}_x(\gamma)=\Lambda\backslash M^{\Hit,\reg,\red}_x(\gamma)$ is both proper and affine since the latter is a torsor under $\Lambda\backslash P_x^{\red}(J_a)$. Hence they must be zero-dimensional, i.e., $\delta(a,x)=0$. This completes the proof.
\end{proof}


Recall the following codimension estimate:

\begin{prop}[Ng\^{o} \cite{NgoFL}, Prop. 5.4.2]\label{p:globaldel}
\begin{enumerate}
\item []
\item If $\textup{char}(k)=0$, then the codimension of $\Aa_{\delta}$ in $\Aa$ is at least $\delta$.
\item If $\textup{char}(k)>0$, then for any $\delta_0$, there exists a positive integer $N(\delta_0)$ such that whenever $\deg(D)\geq N(\delta_0)$ and $\delta\leq\delta_0$, the codimension of $\Aa_{\delta}$ in $\Aa$ is at least $\delta$.
\end{enumerate}
\end{prop}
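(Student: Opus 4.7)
The plan is to reduce the statement to a local codimension estimate using the additivity $\delta(a)=\sum_{x\in X(k)}\delta(a,x)$ recorded just above. For each closed point $x\in X$, I would stratify the germ space $\frc(\hatO_x)_D$ (truncated to finite jet order) by the value of the local Serre invariant $\delta_x$; the essential local input, established by Goresky-Kottwitz-MacPherson in \cite{GKM2}, is that within a sufficiently high jet truncation $J^m(\frc_D)_x$, the locus of $m$-jets $a_x$ with $\delta_x(a_x)=d$ has codimension at least $d$. In characteristic zero this holds once $m$ exceeds a mild bound depending only on $d$, whereas in positive characteristic one must take $m$ large enough to resolve all the ramification data contributing to $\delta$.

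To globalize, fix an integer $m=m(\delta_0)$ such that for every $a\in\Aa$ with $\delta(a)\leq\delta_0$ each local invariant $\delta(a,x)$ is detected by the $m$-jet of $a$ at $x$, and the set $S_a=\Supp(\delta(a,\cdot))$ has uniformly bounded cardinality. Then, provided $\deg(D)\geq N(\delta_0)$ for an appropriate function $N$, the multi-jet evaluation morphism
\begin{equation*}
\textup{ev}_S^m : \AHit \longrightarrow \prod_{x\in S} J^m(\frc_D)_x
\end{equation*}
is smooth and surjective for any $S\subset X$ of bounded size. This is a Riemann-Roch argument that exploits the same ampleness underlying the decomposition \eqref{eq:AHit}: when $\deg(d_iD)$ is sufficiently large, $\cohog{0}{X,\calO_X(d_iD)}$ surjects onto its $m$-jet truncations at any bounded finite subset of $X$.

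Combining the two steps, the substack $\Aa_\delta$ is pulled back from a finite union of products of local $d_x$-strata indexed by partitions $\sum_x d_x=\delta$; since $\textup{ev}_S^m$ is smooth, codimensions transfer faithfully, and summing the local bounds gives $\codim(\Aa_\delta,\Aa)\geq\delta$.

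The main obstacle is the local codimension estimate of \cite{GKM2}: it rests on a purity statement for the equivariant cohomology of affine Springer fibers combined with a delicate comparison between the local $\delta$-invariant and discriminant valuations via the Bezrukavnikov-Kazhdan-Lusztig dimension formula. The characteristic-zero case is cleaner because one can work directly with transversal slices to regular semisimple conjugacy classes; in positive characteristic, wild ramification of the cameral cover forces the jet-order condition, which is precisely what dictates the dependence on $\deg(D)$ encoded in $N(\delta_0)$, and also why the unconditional statement in arbitrary characteristic remains only an expectation.
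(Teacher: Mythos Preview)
The paper does not prove this proposition; it is quoted from Ng\^{o} \cite[Prop.~5.4.2]{NgoFL} (note the ``Recall the following codimension estimate'' preceding it). So there is no proof in the paper to compare against. That said, your sketch is broadly the right outline of Ng\^{o}'s argument: the local input is the codimension estimate for root valuation strata from \cite{GKM2}, and the globalization step is exactly the Riemann--Roch/jet-surjectivity argument you describe, using $\deg(D)$ large to make the multi-jet evaluation smooth.

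Two corrections to your description of the local step. First, the proof in \cite{GKM2} is not via ``a purity statement for the equivariant cohomology of affine Springer fibers''; it is a direct combinatorial-geometric analysis of the stratification of $\frt(\hatO)$ (or its arc-space avatar) by root valuation data, together with the Bezrukavnikov/Kazhdan--Lusztig dimension formula relating $\delta$ to those valuations. Purity plays no role there. Second, the reason for the characteristic dichotomy is not that wild ramification ``forces the jet-order condition'': rather, \cite{GKM2} is written in characteristic zero, and at the time of \cite{NgoFL} the extension to positive characteristic was not available in full; Ng\^{o} instead uses an approximation argument (spreading out and reducing mod $p$) that works only for bounded $\delta$ and sufficiently large $\deg(D)$, which is where $N(\delta_0)$ comes from. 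Your explanation conflates this with a jet-order issue, which is a separate (and milder) constraint already present in characteristic zero.
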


\begin{remark}\label{rm:largedeg}
Most of the results in this series of papers will depend on this estimate. {\em From now on, we fix an open subset $\calA\subset\Aa$ on which the estimate $\codim_{\calA}(\calA\cap\Aa_\delta)\geq\delta$ holds for any $\delta\in\ZZ_{\geq0}$.} According to Prop. \ref{p:globaldel}, if $\textup{char}(k)=0$, we can take $\calA=\Aa$. In the case $\textup{char}(k)>0$, we can for example take $\calA$ to be the union of $\Aa_\delta$ for all $\delta\leq\delta_0$, as long as $\deg(D)\geq N(\delta_0)$. In fact, it is expected that the estimate $\codim(\Aa_\delta)\geq\delta$ holds for any $\delta$ regardless of $\textup{char}(k)$. If this was proved, we could always take $\calA=\Aa$.

We use $\calA_\delta$ and $(\calA\times X)_\delta$ to denote the level sets of the global and local Serre invariants on $\calA$ and $\calA\times X$.
\end{remark}

We have an easy consequence of the above codimension estimate.

\begin{cor}\label{c:localdel}
For $\delta\geq1$, the codimension of $(\calA\times X)_{\delta}$ in $\calA\times X$ is at least $\delta+1$.
\end{cor}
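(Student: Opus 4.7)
The plan is to deduce this from the global codimension estimate (Prop. \ref{p:globaldel}, in the form fixed in Rem. \ref{rm:largedeg}) by comparing the local stratification on $\calA\times X$ with the global one on $\calA$ via the first projection $\pr_1:\calA\times X\to\calA$.

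First I would use the additivity $\delta(a)=\sum_{x\in X(k)}\delta(a,x)$ from the discussion preceding Lem.~\ref{l:nuisom}. For $\delta\geq1$, this immediately gives the set-theoretic inclusion
\begin{equation*}
\pr_1\bigl((\calA\times X)_\delta\bigr)\;\subset\;\bigcup_{\delta'\geq\delta}\calA_{\delta'},
\end{equation*}
since any $(a,x)$ with $\delta(a,x)=\delta$ forces the global invariant to satisfy $\delta(a)\geq\delta$. By our hypothesis on $\calA$ (Rem.~\ref{rm:largedeg}), each $\calA_{\delta'}$ has codimension at least $\delta'\geq\delta$ in $\calA$, so the union on the right has dimension at most $\dim\calA-\delta$.

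Next I would bound the fibers of $\pr_1$ restricted to $(\calA\times X)_\delta$. For a fixed $a\in\calA(k)$, the function $x\mapsto\delta(a,x)$ is supported on the finite set of points $x\in X(k)$ where the spectral/cameral curve over $x$ fails to be \'etale (equivalently, where $a$ meets the discriminant divisor in $\frc_D$); outside this finite set the corresponding local affine Springer fiber is a point and $\delta(a,x)=0$. Hence the fiber of $\pr_1|_{(\calA\times X)_\delta}$ over any $a$ is a finite subset of $X$.

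Combining the two bounds, $\dim(\calA\times X)_\delta\leq\dim\calA-\delta$. Since $\dim(\calA\times X)=\dim\calA+1$, this yields $\codim_{\calA\times X}(\calA\times X)_\delta\geq\delta+1$, as required. There is no serious obstacle here; the only point needing care is the finiteness of the support of $x\mapsto\delta(a,x)$, which is why one gains the extra $+1$ over the pure global estimate — away from the finitely many bad points the local affine Springer fibers are zero-dimensional and contribute nothing.
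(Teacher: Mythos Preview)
Your proposal is correct and is essentially the same argument as the paper's: project to $\calA$, use that the fibers over each $a$ are finite (since $\delta(a,x)>0$ for only finitely many $x$), and invoke the global codimension estimate on the image. You are in fact slightly more careful than the paper in noting that the image lies in $\bigcup_{\delta'\geq\delta}\calA_{\delta'}$ rather than just $\calA_\delta$, but this changes nothing in the dimension count.
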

\begin{proof}
For any $a\in\Aa(k)$ there are only finitely many $x\in X(k)$ such that $\delta(a,x)>0$. In other words, for $\delta\geq1$, the projection $(\calA\times X)_{\delta}\to\calA_{\delta}$ is quasi-finite. Hence
\begin{equation*}
\dim(\calA\times X)-\dim(\calA\times X)_{\delta}\geq\dim(\calA\times X)-\dim(\calA_{\delta})\geq\delta+1.
\qedhere
\end{equation*}
\end{proof}


\section{Construction of the affine Weyl group action}\label{s:Waff}
We first review two constructions of the classical Springer representations, one using middle extension of perverse sheaves and the other using Steinberg correspondences. It turns out that the second construction generalizes to the global situation. We introduce Hecke correspondences in the context of parabolic Hitchin fibration, which play the role of Steinberg correspondences in the classical theory. We then use them to construct the affine Weyl group action on the parabolic Hitchin complex.

In this section, we will always restrict to the open subset $\calA\subset\Aa$ fixed as in Rem. \ref{rm:largedeg}. {\em All stacks over $\AHit$ will be automatically restricted to $\calA$, without changing of notation.}

For a stack $\frX$ or a morphism $F$ over $\calA\times X$, we use $\frX^0$ and $F^0$ to denote their restrictions on the open subset $(\calA\times X)_0\subset\calA\times X$ (the locus where $\delta(a,x)=0$, see (\ref{eq:deflocdel})). For example, we have $\tcA\supset\tcA^0\supset\tcArs$.

\subsection{Review of classical Springer representations}\label{ss:claction}
In this subsection, we recall two constructions of the Weyl group action on the direct image complex for the Grothendieck simultaneous resolution $\pi:\tilg=\frb\twtimes{B}G\to\frg$. Note that over the regular semisimple locus $\frc^{\rs}$, we have a Cartesian diagram
\begin{equation}\label{eq:GrSp}
\xymatrix{\tilg^{\rs}\ar[r]\ar[d]^{\pi^{\rs}} & \frt^{\rs}\ar[d]^{q^{\rs}_{\frt}}\\
\frg^{\rs}\ar[r]^{\chi} & \frc^{\rs}}
\end{equation}
The left action of $W$ on $\frt$ gives a right action: $w$ acts by $t\cdot w=\Ad(w)^{-1}t$. Under this right $W$-action, $\pi^{\rs}$ and $q^{\rs}_{\frt}$ both become right $W$-torsors.

The first construction uses middle extension of perverse sheaves.

\begin{cons}[see \cite{L81},\cite{BM}]\label{cons:claction1}
Since $\pi^{\rs}$ realizes $\tilg^{\rs}$ as a right $W$-torsor over $\frg^{\rs}$, we get a left action of $W$ on $\pi_*\Ql|_{\frg^{\rs}}$ given by pulling back along the action maps. It is well-known that $\pi$ is a small map, so that $\pi_*\Ql$ is the middle extension (after shift to the perverse degree) from its restriction on $\frg^{\rs}$.  Therefore, this left $W$-action uniquely extends to a left $W$-action on $\pi_*\Ql$.
\end{cons}

The second construction uses cohomological correspondences.
\begin{cons}[cf. \cite{CG} for a treatment over $\CC$]\label{cons:claction2}
Consider the self-product
\begin{equation*}
[\frb/B]\times_{[\frg/G]}[\frb/B]=[\St/G]
\end{equation*}
where $\St=\tilg\times_{\frg}\tilg$ is the {\em Steinberg variety of triples}. We view $\St$ as a correspondence
\begin{equation*}
\xymatrix{ & \St\ar[dl]_{\overleftarrow{s}}\ar[dr]^{\overrightarrow{s}} & \\
\tilg\ar[dr]_{\pi} & & \widetilde{\frg}\ar[dl]^{\pi}\\
& \frg & }
\end{equation*}
With respect to the open subset $U=\frg^{\rs}$, $\St$ satisfies the condition (G-2) in Def. \ref{def:grlike} (in fact, this is equivalent to saying that $\pi$ is a small map). Moreover, we have a natural map $\mu:\St*\St\to\St$ given by forgetting the middle $\widetilde{\frg}$, which is obviously associative. By the discussion in Sec. \ref{ss:convalg}, $\Corr(\St;\Ql,\Ql)$ and $\Corr(\St^{\rs};\Ql,\Ql)$ have natural algebra structures given by convolutions. The top-dimensional irreducible components of $\St$ are indexed by $w\in W$, such that $\St_w^{\rs}$ is the graph of the right $w$-action on $\widetilde{\frg}^{\rs}$, i.e., $\St_w^{\rs}$ consists of points $(x,x\cdot w)=(x,w^{-1}x)$ for $x\in\tilg^{\rs}$. Therefore, we have an algebra isomorphism
\begin{equation*} 
\xymatrix{\Ql[W]\ar[r]^{\sim} & \cohog{0}{\St^{\rs},\DD_{\overleftarrow{s}}}\ar[d]^{\wr} & \cohog{0}{\St,\DD_{\overleftarrow{s}}}\ar[l]_{\sim}\ar[d]^{\wr}\\
& \Corr(\St^{\rs};\Ql,\Ql) & \Corr(\St;\Ql,\Ql)\ar[l]_{\sim}}
\end{equation*}
where $\Ql[W]$ is the group algebra of $W$. Under this isomorphism, $w$ corresponds to the fundamental class $[\St_{w}]$. By Prop. \ref{p:alghom}, we get an algebra homomorphism
\begin{equation*}
\Ql[W]\cong\Corr(\St;\Ql,\Ql)\xrightarrow{(-)_\#}\End_{\frg}(\pi_*\Ql)
\end{equation*}
sending $w$ to $[\St_{w}]_\#$. In other words, we get a left $W$-action on $\pi_*\Ql$. 
\end{cons}

\begin{lemma}\label{l:clsame}
Constructions \ref{cons:claction1} and \ref{cons:claction2} give the same $W$-action on $\pi_*\Ql$.
\end{lemma}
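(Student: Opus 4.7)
The strategy is to exploit the rigidity coming from the fact that $\pi_*\Ql$ (suitably shifted) is a simple-enough middle extension that any endomorphism is determined by its restriction to $\frg^{\rs}$; hence the two constructions will agree as soon as they agree there.

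First I would observe that both $W$-actions are compatible with restriction to the regular semisimple locus. For Construction~\ref{cons:claction1} this is tautological, since the action on $\pi_*\Ql$ is defined precisely by extending the action on $\pi^{\rs}_*\Ql$. For Construction~\ref{cons:claction2}, the formation of $[\calH_{\tilw}]_\#$ (in the classical shorthand, $[\St_w]_\#$) commutes with restriction to the open subset $U=\frg^{\rs}$, because the cohomological correspondence functor $(-)_\#$ behaves well under open restriction of the base (see the discussion in App.~\ref{s:corr}). Since $\pi$ is small, $\pi_*\Ql[\dim\frg]$ is the middle extension of $\pi^{\rs}_*\Ql[\dim\frg]$, and therefore the restriction map $\End_{\frg}(\pi_*\Ql)\to\End_{\frg^{\rs}}(\pi^{\rs}_*\Ql)$ is injective. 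Thus it suffices to compare the two actions after restricting to $\frg^{\rs}$.

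Over $\frg^{\rs}$, by diagram~(\ref{eq:GrSp}) the map $\pi^{\rs}:\tilg^{\rs}\to\frg^{\rs}$ is a right $W$-torsor, and accordingly the Steinberg variety decomposes as a disjoint union
\[
\St^{\rs} \;=\; \coprod_{w\in W} \St^{\rs}_w,
\]
where $\St^{\rs}_w=\{(x,x\cdot w)\mid x\in\tilg^{\rs}\}$ is the graph of the right action of $w$ on $\tilg^{\rs}$. So the second action over $\frg^{\rs}$ sends $w$ to the cohomological correspondence attached to the graph of the isomorphism $r_w:\tilg^{\rs}\isom\tilg^{\rs}$, $x\mapsto x\cdot w$.

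The core remaining point --- and this is the only nontrivial step --- is the general fact that for a proper map $\pi:Y\to Z$ and an automorphism $r:Y\isom Y$ over $Z$, the cohomological correspondence $[\Gamma_r]_\#$ associated to the fundamental class of the graph $\Gamma_r\subset Y\times_Z Y$ coincides with the endomorphism of $\pi_*\Ql$ induced by pullback along $r$. This is a standard compatibility of the formalism of cohomological correspondences with functoriality (specifically, for the graph of an isomorphism the construction in Sec.~\ref{ss:corr} unwinds to the pullback map); I would state and use it directly from the appendix rather than verifying it ad hoc. Applying it to $r=r_w$ identifies $[\St_w]_\#|_{\frg^{\rs}}$ with the action of $w$ on $\pi^{\rs}_*\Ql$ defined in Construction~\ref{cons:claction1}. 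By the injectivity observed above, the two actions on $\pi_*\Ql$ coincide, proving the lemma.

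The main potential obstacle is simply to cite (or verify) cleanly the identification of the cohomological correspondence of a graph of an isomorphism with the corresponding pullback endomorphism; everything else is bookkeeping about smallness and the middle extension property.
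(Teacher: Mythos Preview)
Your proposal is correct and follows essentially the same argument as the paper: compare over $\frg^{\rs}$ using the fact that $\St_w^{\rs}$ is the graph of the right $w$-action (the paper cites Example~\ref{ex:graph} for the compatibility you flag as the ``core remaining point''), then use that $\pi_*\Ql$ is a middle extension to conclude globally. The only minor difference is that the paper notes the restriction map $\End_{\frg}(\pi_*\Ql)\to\End_{\frg^{\rs}}(\pi_*\Ql|_{\frg^{\rs}})$ is actually an isomorphism, whereas you only use injectivity---but injectivity is all that is needed.
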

\begin{proof}
Over $\frg^{\rs}$, the two constructions are obviously the same because $\St^{\rs}_w$ is the graph of the right $w$-action on $\tilg^{\rs}$ (cf. Example \ref{ex:graph}). Since $\pi_*\Ql$ is the middle extension (up to shift) of its restriction on $\frg^{\rs}$, the restriction map
\begin{equation*}
\End_{\frg}(\pi_*\Ql)\to\End_{\frg^{\rs}}(\pi_*\Ql|_{\frg^{\rs}})
\end{equation*}
is an isomorphism. Therefore, the two constructions yield the same action over $\frg$.
\end{proof}


\subsection{The Hecke correspondence}\label{ss:Hecke}

We first recall the definition of the {\em Hecke correspondence} between $\Bunpar_G$ and itself over $X$:
\begin{equation}\label{eq:HkBun}
\xymatrix{ & \Hecke^{\Bun} \ar[dl]_{\overleftarrow{b}}\ar[dr]^{\overrightarrow{b}} & \\
 \Bunpar_G\ar[dr]_{} & & \Bunpar_G\ar[dl]^{}\\
& X & }
\end{equation}
For any scheme $S$, $\Hecke^{\Bun}(S)$ classifies tuples $(x,\calE_1,\calE^B_{1,x},\calE_2,\calE^B_{2,x},\alpha)$ where
\begin{itemize}
\item $(x,\calE_i,\calE^B_{i,x})\in\Bunpar_G(S)$ for $i=1,2$; 
\item $\alpha:\calE_1|_{\XS-\Gamma(x)}\isom\calE_2|_{\XS-\Gamma(x)}$ is an isomorphism of $G$-torsors.
\end{itemize}
It is well-known that $\Hecke^{\Bun}$ has a stratification
\begin{equation*}
\Hecke^{\Bun}=\bigsqcup_{\tilw\in\tilW}\Hecke^{\Bun}_{\tilw}.
\end{equation*}
The Bruhat order on $\tilW$ coincides with the partial order induced by the closure relation among the strata $\Hecke^{\Bun}_{\tilw}$.

\begin{defn}\label{def:Heckep} The {\em Hecke correspondence} $\Heckep$ is a self-correspondence of $\Mpar$ over $\calA\times X$:
\begin{equation*}
\xymatrix{ & \Heckep \ar[dl]_{\overleftarrow{h}}\ar[dr]^{\overrightarrow{h}} & \\
 \Mpar\ar[dr]_{\fpar} & & \Mpar\ar[dl]^{f^{\parab}}\\
& \calA\times X & }
\end{equation*}
For any scheme $S$, $\Heckep(S)$ classifies tuples $(x,\calE_1,\varphi_1,\calE^{B}_{1,x},\calE_2,\varphi_2,\calE^{B}_{2,x},\alpha)$ where
\begin{itemize}
\item $(x,\calE_i,\varphi_i,\calE^{B}_{i,x})\in\Mpar(S)$ for $i=1,2$;
\item $\alpha$ is an isomorphism of Hitchin pairs $(\calE_1,\varphi_1)|_{\XS-\Gamma(x)}\isom(\calE_2,\varphi_2)|_{\XS-\Gamma(x)}$.
\end{itemize}
\end{defn}

By definition, we have a commutative diagram of correspondences
\begin{equation}\label{d:hhecke}
\xymatrix{\Heckep\ar@<-1ex>@/_/[d]_{\overleftarrow{h}}\ar@<1ex>@/^/[d]^{\overrightarrow{h}}\ar[r]^{\beta} & \Hecke^{\Bun}\ar@<-1ex>@/_/[d]_{\overleftarrow{b}}\ar@<1ex>@/^/[d]^{\overrightarrow{b}}\\
\Mpar\ar[d]^{\fpar}\ar[r] & \Bunpar_G\ar[d]\\
\calA\times X\ar[r] & X}
\end{equation}

\begin{lemma}\label{l:heckeft} The functor $\Heckep$ is representable by an ind-algebraic stack of ind-finite type, and the two projections $\overleftarrow{h},\overrightarrow{h}:\Heckep\to\Mpar$ are ind-proper.
\end{lemma}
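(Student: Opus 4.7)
My plan is to reduce the statement to the well-known fact that the Hecke correspondence $\Hecke^{\Bun}$ for $\Bunpar_G$ is ind-algebraic of ind-finite type, with ind-proper projections to $\Bunpar_G$. More precisely, $\Hecke^{\Bun}$ carries the stratification $\Hecke^{\Bun}=\bigsqcup_{\tilw\in\tilW}\Hecke^{\Bun}_{\tilw}$ with closure relations given by the Bruhat order, each closure $\Hecke^{\Bun}_{\leq\tilw}$ is a finite-type algebraic substack, and each restriction $\overleftarrow{b}|_{\leq\tilw},\overrightarrow{b}|_{\leq\tilw}\colon\Hecke^{\Bun}_{\leq\tilw}\to\Bunpar_G$ is representable and proper (a Schubert-cell decomposition of the affine flag variety fibration). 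Via the morphism $\beta\colon\Heckep\to\Hecke^{\Bun}$ in diagram (\ref{d:hhecke}), I define $\Heckep_{\leq\tilw}:=\beta^{-1}(\Hecke^{\Bun}_{\leq\tilw})$, so that $\Heckep=\bigcup_{\tilw}\Heckep_{\leq\tilw}$, and it suffices to treat each piece.

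Fix $\tilw\in\tilW$ and form the fiber product
\[
Z_{\tilw}\;:=\;\Hecke^{\Bun}_{\leq\tilw}\times_{\Bunpar_G,\overleftarrow{b}}\Mpar,
\]
which parametrizes a bounded Hecke modification $(x,\calE_1,\calE^B_{1,x},\calE_2,\calE^B_{2,x},\alpha)$ together with a Higgs field $\varphi_1$ on $\calE_1$ compatible with $\calE^B_{1,x}$. Since $\overleftarrow{b}|_{\leq\tilw}$ is representable and proper, so is the projection $Z_{\tilw}\to\Mpar$; combined with the fact that $\Mpar$ is already of finite type after restriction to $\calA\subset\Aa$ (Prop.~\ref{p:smooth} and Cor.~\ref{c:flat}), this shows $Z_{\tilw}$ is algebraic of finite type. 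Transporting $\varphi_1$ through $\alpha$ gives a rational section $\alpha_*\varphi_1$ of $\Ad(\calE_2)\otimes\calO_X(D)$ on $S\times X-\Gamma(x)$, whose pole order along $\Gamma(x)$ is a priori bounded by the type $\tilw$ of the modification. The key claim is that $\Heckep_{\leq\tilw}\hookrightarrow Z_{\tilw}$ is a closed immersion cut out by the two conditions that (i) $\alpha_*\varphi_1$ has no pole along $\Gamma(x)$, so extends to a regular Higgs field $\varphi_2$ on $\calE_2$, and (ii) the extension $\varphi_2$ is compatible with the $B$-reduction $\calE^B_{2,x}$.

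Granting this claim, $\Heckep_{\leq\tilw}$ is a finite-type algebraic (in fact Deligne--Mumford) substack of $Z_{\tilw}$, and the composition $\overleftarrow{h}|_{\leq\tilw}\colon\Heckep_{\leq\tilw}\hookrightarrow Z_{\tilw}\to\Mpar$ is proper as a closed immersion followed by a proper map. The argument for $\overrightarrow{h}$ is strictly symmetric, using $Z'_{\tilw}:=\Hecke^{\Bun}_{\leq\tilw}\times_{\Bunpar_G,\overrightarrow{b}}\Mpar$ and transporting $\varphi_2$ backward through $\alpha^{-1}$. Taking the colimit over the Bruhat poset on $\tilW$ yields the ind-algebraic structure on $\Heckep$ of ind-finite type with both projections ind-proper.

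The step that demands the most care is the closedness of condition (i), namely that requiring regularity of $\alpha_*\varphi_1$ along $\Gamma(x)$ is a closed condition on the finite-type stack $Z_{\tilw}$. The boundedness of the modification by $\tilw$ confines the possible pole orders to a finite range, so the non-extension locus is cut out by the non-vanishing of finitely many principal-part coefficients of $\alpha_*\varphi_1$, which vary algebraically in the moduli; accordingly the extension locus is closed. Condition (ii) is then manifestly closed since it is the vanishing of the image of $\varphi_2|_{\Gamma(x)}$ in the quotient $\Ad(\calE_2)/\Ad(\calE^B_{2,x})\otimes x^*\calO_X(D)$, exactly the compatibility defect used in Def.~\ref{def:par1}.
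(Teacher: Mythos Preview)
Your proof is correct and follows essentially the same approach as the paper: both reduce to the known ind-structure of $\Hecke^{\Bun}$ via the forgetful map $\beta$ and the Schubert filtration $\Hecke^{\Bun}_{\leq\tilw}$. The paper's argument is terser---it simply asserts that the fibers of $\overleftarrow{h},\overrightarrow{h}$ are closed sub-ind-schemes of those of $\overleftarrow{b},\overrightarrow{b}$ and then invokes the Schubert-cell description of $\Hecke^{\Bun}_{\leq\tilw}\to\Bunpar_G$---whereas you spell out explicitly the fiber product $Z_{\tilw}$ and verify that $\Heckep_{\leq\tilw}\hookrightarrow Z_{\tilw}$ is a closed immersion by checking that regularity of $\alpha_*\varphi_1$ and compatibility with $\calE^B_{2,x}$ are closed conditions.
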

\begin{proof}
From the definitions, we see that the fibers of $\overleftarrow{h},\overrightarrow{h}$ are closed sub-ind-schemes of the fibers of $\overleftarrow{b},\overrightarrow{b}$. Hence it suffices to check the same statement for $\Hecke^{\Bun}$.

For each $\tilw\in\tilW$, let $\Hecke^{\Bun}_{\leq\tilw}$ be the closure of $\Hecke^{\Bun}_{\tilw}$. Then the projections
\begin{equation*}
\overleftarrow{b},\overrightarrow{b}:\Hecke^{\Bun}_{\leq\tilw}\to\Bunpar_{G}
\end{equation*}
are \'etale locally trivial bundles with fibers isomorphic to Schubert cycles in the affine flag variety $\Flag_G$. In particular, $\Hecke^{\Bun}_{\leq\tilw}$ is proper over $\Bunpar_G$ for both projections. This proves the lemma.
\end{proof}

Let us describe the fibers of $\overleftarrow{h}$ and $\overrightarrow{h}$. Let $(x,\calE,\varphi,\calE^B_{x})\in\Mpar(k)$. After trivializing $\calE|_{\Delta_x}$ and choosing an isomorphism $\hatO_x\cong\hatO_x(D)$, we get an isomorphism $\tau:(\calE,\varphi)|_{\disk_x}\isom(\calE^{\triv},\gamma_{a,x})$ for some $\gamma_{a,x}\in\frg(\hatO_x)$ such that $\chi(\gamma_{a,x})=a\in\frc(\hatO_x)$.

\begin{lemma}\label{l:hfiber}
The fibers of $\overleftarrow{h}$ and $\overrightarrow{h}$ over $(x,\calE,\varphi,\calE^B_{x})\in\Mpar(k)$ are isomorphic to the affine Springer fiber $M^{\parab}_x(\gamma_{a,x})$.
\end{lemma}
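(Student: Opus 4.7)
The plan is to use Beauville--Laszlo gluing to trade the modification datum $\alpha$ on $\XS - \Gamma(x)$ for purely local datum on the formal disk $\disk_x$, and then to recognize this local datum as precisely a point of the affine Springer fiber $M^{\parab}_x(\gamma_{a,x})$. By the symmetry of Def.~\ref{def:Heckep} (swapping the two Hitchin pairs and inverting $\alpha$) it suffices to treat $\overleftarrow{h}$.

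Concretely, first I would describe the $k$-points of the fiber $\overleftarrow{h}^{-1}(x,\calE,\varphi,\calE^B_x)$. By Beauville--Laszlo applied to Hitchin pairs, giving $(\calE_2,\varphi_2,\alpha)$ with $\alpha:(\calE,\varphi)|_{X-\{x\}}\isom(\calE_2,\varphi_2)|_{X-\{x\}}$ is the same as giving a Hitchin pair $(\calE_2',\varphi_2')$ on $\disk_x$ together with an isomorphism $(\calE_2',\varphi_2')|_{\pdisk_x}\isom(\calE,\varphi)|_{\pdisk_x}$. Using the fixed trivialization $\tau$ identifying $(\calE,\varphi)|_{\disk_x}$ with $(\calE^{\triv},\gamma_{a,x})$, such a datum becomes (after choosing any trivialization of $\calE_2'$ on $\disk_x$, well-defined up to right multiplication by $G(\hatO_x)$) an element $g\in G(F_x)/G(\hatO_x)$ with the requirement that $\Ad(g^{-1})\gamma_{a,x}\in\frg(\hatO_x)$, i.e.\ a point of $M^{\Hit}_x(\gamma_{a,x})$.

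Next I would add the parabolic datum $\calE^B_{2,x}$. Instead of choosing any trivialization of $\calE_2'$ on $\disk_x$, I would choose one that is compatible with $\calE^B_{2,x}$; such trivializations exist and are unique up to right multiplication by the Iwahori $\bI_{0}\subset G(\hatO_x)$ associated with $B$. Then the modification class lies in $G(F_x)/\bI_0=\Flag_{G,x}$, and the compatibility requirement that $\varphi_2|_{\Gamma(x)}$ land in $\Ad(\calE^B_{2,x})(D)$ becomes $\Ad(g^{-1})\gamma_{a,x}\in\Lie\bI_0$, equivalently $\gamma_{a,x}\in\Lie\bI$ for the Iwahori $\bI:=g\bI_0 g^{-1}$. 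This is exactly the moduli description of $M^{\parab}_x(\gamma_{a,x})$ recalled in the paragraph preceding the lemma. An inverse map is given by gluing the trivial bundle on $\disk_x$ (with Higgs field $\Ad(g^{-1})\gamma_{a,x}$ and Borel reduction $B\subset G$ at $x$) to $(\calE,\varphi)|_{X-\{x\}}$ via $g$; the Iwahori condition guarantees compatibility.

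Finally I would upgrade this bijection of $k$-points to an isomorphism of ind-schemes. The relative version of Beauville--Laszlo (applied to $S\htimes\disk_x\subset S\times X$ along $\Gamma(x)$, using that the two projections $\overleftarrow{h},\overrightarrow{h}$ are ind-proper by Lem.~\ref{l:heckeft}) produces the map functorially in $S$, and the same local trivialization argument, carried out with $\bI_{0}$-torsors of Borel-compatible trivializations on $S\htimes\disk_x$, identifies the fiber functor with the functor defining $M^{\parab}_x(\gamma_{a,x})$. The main technical point I expect is simply bookkeeping: checking that the choice of local trivializations of $\calE$ and $\calO_X(D)$ on $\disk_x$ used to produce $\gamma_{a,x}$ is irrelevant (any two differ by $G(\hatO_x)\times\hatO_x^\times$, under which $M^{\parab}_x(\gamma_{a,x})$ transforms canonically), and that the Beauville--Laszlo gluing preserves the parabolic compatibility condition in families.
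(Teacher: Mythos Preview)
Your proposal is correct and follows essentially the same approach as the paper: both arguments use Beauville--Laszlo gluing to identify a point of the Hecke fiber with local data on $\disk_x$ together with a trivialization on $\pdisk_x$, and then recognize this as the functor of points of $M^{\parab}_x(\gamma_{a,x})$. The paper's version is slightly more streamlined because it works directly with the moduli-theoretic description of $M^{\parab}_x(\gamma_{a,x})$ (quadruples $(\calE,\varphi,\calE^B_x,\alpha)$ on the formal disk) rather than the coset description $g\in G(F_x)/\bI_0$ you unwind to, so the ``restrict to $\disk_x$ and compose with $\tau$'' map is immediate; your extra step of choosing Iwahori-compatible trivializations is just the translation between these two equivalent descriptions.
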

\begin{proof}
We prove the statement for $\overrightarrow{h}$; the other one is similar. For any scheme $S$, we have a natural map
\begin{eqnarray}\label{eq:afffib}
\overrightarrow{h}^{-1}(x,\calE,\varphi,\calE^B_{x})(S)&\to & M^{\parab}_x(\gamma_{a,x})(S)\\
\notag
(\calE_1,\varphi_1,\calE^B_{1,x},\alpha)&\mapsto& (\calE_1|_{\disk_x},\varphi_1|_{\disk_x},\calE^B_{1,x},\beta)
\end{eqnarray}
where $\alpha:(\calE_1,\varphi_1)|_{(X-\{x\})\times S}\isom(\calE,\varphi)|_{(X-\{x\})\times S}$ and 
\begin{equation*}
\beta:(\calE_1,\varphi_1)|_{\pdisk_x\htimes S}\xrightarrow{\alpha}(\calE,\varphi)|_{\pdisk_x\htimes S}\xrightarrow{\tau}(\calE^{\triv},\gamma_{a,x}).
\end{equation*} 
Now it is easy to see that (\ref{eq:afffib}) is injective; it is also surjective because any local modification of $(\calE^{\triv},\gamma_{a,x})$ on $\pdisk_x\htimes S$ can be glued with $(\calE,\varphi)|_{(X-\{x\})\times S}$ to get a Hitchin pair on $X$. Therefore (\ref{eq:afffib}) induces is an isomorphism for any $S$.
\end{proof}

In \cite[Sec. 3.4]{GSII}, we will study the relation between $\Hecke^{\Bun}$ and $\Heckep$.


\subsection{Hecke correspondence over the nice locus}\label{ss:heckenice}
In this subsection, we determine the structure of the Hecke correspondence $\Heckep$ over the locus $(\calA\times X)_0$.

Consider the map 
\begin{equation}\label{eq:wpart}
\Heckep\xrightarrow{\overleftrightarrow{h}}\Mpar\times_{\calA\times X}\Mpar\xrightarrow{(\tilf,\tilf)}\tcA\times_{\calA\times X}\tcA\to\frt_D\times_{\frc_D}\frt_D.
\end{equation}
For $w\in W$, let $\frt_{w,D}\subset\frt_{D}\times_{\frc_D}\frt_D$ be the graph of the {\em right} $w$-action on $\frt_D$, i.e., $\frt_{w,D}$ consists of points $(t,w^{-1}t)$ (see Sec. \ref{ss:claction}). Let $\Heckep_{[w]}\subset\Heckep$ be the pre-image of $\frt_{w,D}$ under the map (\ref{eq:wpart}). They are disjoint over $\Mparrs$.

Consider $\Hecke^{\St}:=\Mpar\times_{\MHit\times X}\Mpar$ as a self-correspondence of $\Mpar$, where $\St$ stands for Steinberg. Then we have an embedding of correspondences $\Hecke^{\St}\subset\Heckep$ by identifying $\Hecke^{\St}$ with those Hecke modifications which does not change the underlying Hitchin pairs (i.e., only the Borel reduction is modified). Let $\Hecke^{\St}_w:=\Heckep_{[w]}\cap\Hecke^{\St}$.

The following lemma is immediate from definition.

\begin{lemma}\label{l:Wgraph}
Over $(\calA\times X)_0$, we identify $\calM^{\parab,0}$ with $\MHit\times_{\calA}\tcA^0$ by Lem. \ref{l:nuisom}. Then $\Hecke^{\St,0}_w$ is the graph of the right $w$-action on the second factor of $\MHit\times_{\calA}\tcA^0$.
\end{lemma}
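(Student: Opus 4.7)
The plan is essentially to unfold the definitions of $\Hecke^{\St}$, $\Heckep_{[w]}$, and the isomorphism $\nu_{\calM}$, and then identify the condition of landing in $\frt_{w,D}$ with the condition of being in the graph of the $w$-action.

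First, I will rewrite $\Hecke^{\St,0}$ in terms of $\MHit$ and $\tcA^0$. By Lem.~\ref{l:nuisom}, $\nu_{\calM}$ restricts to an isomorphism $\Mpar^0\isom\MHit\times_{\calA}\tcA^0$ over $(\calA\times X)_0$, and under this isomorphism the projection $\Mpar^0\to\MHit\times X$ (used to define the Steinberg correspondence) becomes $(m,y)\mapsto(m,x(y))$, where $x(y)$ denotes the $X$-coordinate of $y\in\tcA^0$. Taking the fiber product yields
\begin{equation*}
\Hecke^{\St,0}\;\cong\;\MHit\times_{\calA}\bigl(\tcA^0\times_{\calA\times X}\tcA^0\bigr),
\end{equation*}
whose $k$-points are triples $(m,y_1,y_2)$ with $y_1,y_2\in\tcA^0$ over the common base point $(f^{\Hit}(m),x)\in(\calA\times X)_0$.

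Next I would trace the map~\eqref{eq:wpart} on $\Hecke^{\St,0}$. By construction, the restriction of $\overleftrightarrow{h}$ to $\Hecke^{\St,0}$ is the inclusion into $\Mpar^0\times_{\calA\times X}\Mpar^0$, so composing with $(\tilf,\tilf)$ sends $(m,y_1,y_2)\mapsto(y_1,y_2)\in\tcA^0\times_{\calA\times X}\tcA^0$. The further map to $\frt_D\times_{\frc_D}\frt_D$ is induced by $\ev:\tcA\to\frt_D$ from the Cartesian square~\eqref{d:tcA}. Since $\tcA=(\AHit\times X)\times_{\frc_D}\frt_D$, for fixed $(a,x)$ the map $\ev$ gives a bijection between points of $\tcA$ lying over $(a,x)$ and points of $\frt_D$ lying over $\ev(a,x)$; moreover, the right $W$-action on $\tcA$ is by definition the pullback of the right $W$-action on $\frt_D$, so $\ev(y\cdot w)=\ev(y)\cdot w=w^{-1}\ev(y)$.

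Finally, a point $(m,y_1,y_2)\in\Hecke^{\St,0}$ lies in $\Heckep_{[w]}$ precisely when $(\ev(y_1),\ev(y_2))\in\frt_{w,D}$, i.e.\ $\ev(y_2)=\ev(y_1)\cdot w=\ev(y_1\cdot w)$. By the bijection above (applied to the common base point $(f^{\Hit}(m),x)$), this is equivalent to $y_2=y_1\cdot w$ in $\tcA^0$. Hence $\Hecke^{\St,0}_w$ is exactly the locus $\{(m,y,y\cdot w)\}$, which is the graph of the right $w$-action on the second factor of $\MHit\times_{\calA}\tcA^0$.

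The only potential subtlety I foresee is the scheme-theoretic identification: over points of $(\calA\times X)_0$ that are not regular semisimple, the fibers of $\tcA^0\to\calA\times X$ may be non-reduced, and similarly the closed subschemes $\frt_{w,D}$ need not be disjoint in $\frt_D\times_{\frc_D}\frt_D$. One must therefore check that the scheme-theoretic preimage of $\frt_{w,D}$ under the (relative) isomorphism $\tcA^0\times_{\calA\times X}\tcA^0\isom(\calA\times X)\times_{\frc_D}(\frt_D\times_{\frc_D}\frt_D)$ coincides with the graph of $w$. This follows because the two projections $\tcA^0\to\frt_D$ are obtained from the $\calA\times X$-projections by the same Cartesian base change, so the preimage of $\frt_{w,D}$ is canonically $\tcA^0$ embedded via $(\id,w^{-1})$, i.e.\ the graph of the $w$-action. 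This is the only part requiring care, but it amounts to a routine base-change computation rather than a substantive obstacle.
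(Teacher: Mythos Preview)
Your proposal is correct and is precisely the unwinding of definitions that the paper intends: the paper gives no proof beyond ``immediate from definition,'' and your argument supplies the details by identifying $\Hecke^{\St,0}$ with $\MHit\times_{\calA}(\tcA^0\times_{\calA\times X}\tcA^0)$ via $\nu_{\calM}$ and then reading off the $[w]$-condition from the Cartesian square~\eqref{d:tcA}. Your final paragraph on the scheme-theoretic identification is a reasonable bit of care, and the base-change observation you give there is exactly right.
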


\subsubsection{Group action on the Hecke correspondence} We define the {\em global affine Grassmannian} $\Grass_J$ of the group scheme $J$ over $\calA\times X$ as the functor which sends a scheme $S$ to the set of isomorphism classes of quadruples $(a,x,Q^J,\tau)$ where $(a,x)\in\calA(S)\times X(S)$, $Q^J$ is a $J_a$-torsor over $\XS$ and $\tau$ a trivialization of $Q^J$ over $\XS-\Gamma(x)$.

\begin{remark}
The fiber of $\Grass_J$ over $(a,x)\in(\calA\times X)(k)$ is canonically isomorphic to the local symmetry group $P_x(J_a)$ defined in Sec. \ref{ss:locglob}.
\end{remark}

Let $\tilGr_J$ be the pull back of $\Grass_J$ from $\calA\times X$ to $\tcA$.  Since $J$ is a commutative group scheme, the corresponding affine Grassmannians $\tilGr_J$ is naturally a group ind-scheme over $\tcA$. We have a homomorphism $\tilGr_J\to\tcP$ of sheaves of groups over $\tcA$ by forgetting the trivialization. Since $\tcP$ acts on $\Mpar$ over $\tcA$ by Lem. \ref{l:tcPaction}, we get an action of $\tilGr_J$ on $\Heckep$ by changing the second factor $(\calE_2,\varphi_2,\calE^B_{2,x})$ via the homomorphism $\tilGr_J\to\tcP$. This (left) action preserves the maps $\overleftarrow{h}$ and $\tilf\circ\overrightarrow{h}$. In particular, $\tilGr_J$ acts on $\Heckep_{[w]}$ for each $w\in W$.

\begin{lemma}\label{l:lattice}
Over $(\calA\times X)_0$, $\overleftarrow{h}^{0}_{[w]}:\Hecke^{\parab,0}_{[w]}\to\calM^{\parab,0}$ is a left $\tilGr_J^{0}$-torsor with a canonical trivialization given by the section $\calM^{\parab,0}\stackrel{\overleftarrow{h}^{\St,0}_w}{\cong}\Hecke^{\St,0}_w\hookrightarrow\Hecke^{\parab,0}_{[w]}$.
\end{lemma}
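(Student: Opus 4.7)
The plan is to verify the torsor property fiberwise over geometric points of $\Mpar^0$, using the description of the Hecke fibers as local affine Springer fibers (Lemma \ref{l:hfiber}) and the structural consequences of $\delta(a,x)=0$.

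First, I would check that the section is well-defined. By Lemma \ref{l:Wgraph}, under the identification $\Mpar^0 \cong \MHit \times_{\calA} \tcA^0$ of Lemma \ref{l:nuisom}, the stratum $\Hecke^{\St,0}_w$ is the graph of the right $w$-action on the $\tcA^0$-factor. In particular, $\overleftarrow{h}^{\St,0}_w:\Hecke^{\St,0}_w\to\Mpar^0$ is an isomorphism; composing its inverse with the inclusion $\Hecke^{\St,0}_w\hookrightarrow\Heckep^0_{[w]}$ gives the asserted section $s_w$, which by construction lands inside $\Heckep^0_{[w]}$.

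Second, I would compute the fiber of $\overleftarrow{h}^0_{[w]}$ at a geometric point $m_0=(x,\calE,\varphi,\calE^B_x)$ lying over $(a,x)\in(\calA\times X)_0$. By Lemma \ref{l:hfiber}, the fiber of $\overleftarrow{h}$ is the local affine Springer fiber $M^{\parab}_x(\gamma_{a,x})$. Since $\delta(a,x)=0$, the argument in the proof of Lemma \ref{l:nuisom} gives $M^{\Hit}_x(\gamma_{a,x})=M^{\Hit,\reg}_x(\gamma_{a,x})$, which is a single $P_x(J_a)$-torsor. The forgetful map $M^{\parab}_x(\gamma_{a,x})\to M^{\Hit}_x(\gamma_{a,x})$ is the local avatar of $\Mpar\to\MHit\times X$, and its fiber over a regular Hitchin pair $(\calE_1,\varphi_1)$ consists of Borel reductions at $x$ compatible with $\varphi_1(x)\in\frg^{\reg}$. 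By Kostant, this fiber is canonically a $W$-torsor, and the component corresponding to $w\in W$ is exactly the image in $\tcA$ of the element $w^{-1}\cdot\tilx$, where $\tilx\in\tcA^0$ is the cameral lift of $x$ coming from $\calE^B_x$. Comparing with the definition of $\Heckep_{[w]}$ via \eqref{eq:wpart}, this identifies $\overleftarrow{h}^{0,-1}_{[w]}(m_0)$ with a single $P_x(J_a)$-orbit inside $M^{\parab}_x(\gamma_{a,x})$.

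Third, I would match the transitive $P_x(J_a)$-action on this orbit with the $\tilGr_J^0$-action restricted to the fiber. The fiber of $\tilGr_J$ at $(a,x)$ equals $P_x(J_a)$ by construction, and the $\tilGr_J$-action on $\Heckep$ (defined via $\tilGr_J\to\tcP$ and the $\tcP$-action on the second Hitchin pair) agrees with the local $P_x(J_a)$-action that realizes $M^{\Hit,\reg}_x$ as a torsor: both modify the underlying $J_a$-torsor on a neighborhood of $\Gamma(x)$ while using the tautological trivialization off $\Gamma(x)$ to glue. Combined with the preceding step, this shows that $\tilGr_J^0$ acts simply transitively on each fiber of $\overleftarrow{h}^0_{[w]}$, and that the section $s_w$ identifies $\Heckep^0_{[w]}\cong\tilGr_J^0$ globally.

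The main obstacle I foresee is spelling out the compatibility in Step 3 between the global $\tcP$-action on $\Mpar$ (changing a Hitchin pair and its Borel reduction via the regular centralizer maps $\jmath_G$, $\jmath_B$ of Lemma \ref{l:bcent}) and the local $P_x(J_a)$-action on the regular affine Springer fiber. Concretely, one needs to verify that translating the second factor of a Hecke datum by a $J_a$-torsor trivialized off $\Gamma(x)$ coincides with the corresponding local $P_x(J_a)$-action on $M^{\Hit,\reg}_x(\gamma_{a,x})$ used in the product formula (Proposition \ref{p:prod}). This is a direct unraveling of definitions, but requires pinning down the trivialization $\tau$ chosen in Lemma \ref{l:hfiber} and tracking it through both constructions.
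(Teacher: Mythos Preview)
Your proposal is correct and follows essentially the same route as the paper's proof: establish the section via Lemma~\ref{l:Wgraph}, identify the fiber of $\overleftarrow{h}^0_{[w]}$ with the regular affine Springer fiber $M^{\Hit,\reg}_x(\gamma_{a,x})$ using $\delta(a,x)=0$ and Kostant, and conclude it is a $P_x(J_a)$-torsor. The only cosmetic difference is that the paper short-circuits your Step~2 by observing directly that once $\tilx_2=w^{-1}\tilx_1$ is fixed the Borel reduction $\calE^B_{2,x}$ is determined (since $\delta(a,x)=0$), so the fiber is immediately $M^{\Hit}_x(\gamma)$ rather than a piece of $M^{\parab}_x(\gamma)$; and the paper works over an arbitrary test scheme $S$ rather than only geometric points, reducing to geometric points only at the final step $M^{\Hit}_x=M^{\Hit,\reg}_x$.
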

\begin{proof}
By Lem. \ref{l:Wgraph}, $\overleftarrow{h}^{\St,0}_w$ is an isomorphism and hence $\Hecke^{\St,0}_w$ gives a section of $\overleftarrow{h}^{0}_{[w]}$. To prove the lemma, we only need to show that for any point $m=(x,\calE_1,\varphi_1,\calE^B_{1,x})\in\Mpar(S)$ over $(a,x)\in(\calA\times X)_0(S)$, the fiber $(\overleftarrow{h}_{[w]})^{-1}(m)$ is a $P_x(J_a)$-torsor (although $P_x(J_a)$ was only defined for geometric points $(a,x)$ in Sec. \ref{ss:locglob}, the definition makes sense for any $S$-point $(a,x)$).

Let $\tilf(m)=(a,\tilx_1)$ for some $\tilx_1\in X_a$ over $x$. For a point $m'=(m,\calE_2,\varphi_2,\calE^B_{2,x},\alpha)\in(\overleftarrow{h}_{[w]})^{-1}(m)$, we have $\tilf(m)=(a,\tilx_2)$ where $\tilx_2=w^{-1}\cdot\tilx_1$ because $m'\in\Heckep_{[w]}$. Such a point $m'$ is completely determined by $(\calE_2,\varphi_2,\alpha)$ because the choice of the Borel reduction $\calE^B_{2,x}$ at $x$ is fixed by $\tilx_2$ (here we use the fact that $\delta_a(x)=0$). By analogy with Lem. \ref{l:hfiber}, we get a $P_x(J_a)$-equivariant isomorphism
\begin{equation*}
(\overleftarrow{h}_{[w]})^{-1}(m)\cong M^{\Hit}_x(\gamma)
\end{equation*}
for some $\gamma\in\frg(\hatO_x)$ with $\chi(\gamma)=a\in\frc(\hatO_x)$. Consider the regular locus $M^{\Hit,\reg}_x(\gamma)\subset M^{\Hit}_x(\gamma)$ (see \cite[3.3]{NgoFL}; again there it was defined for a geometric point $(a,x)$, but the definition makes sense for $S$-points), which is a torsor under $P_x(J_a)$. Therefore it suffices to show that $M^{\Hit,\reg}_x(\gamma)=M^{\Hit}_x(\gamma)$. Since $M^{\Hit,\reg}_x(\gamma)$ is open in $M^{\Hit}_x(\gamma)$, we only need to check that they are equal over every geometric point of $S$. Hence we reduce to the case where $S$ is the spectrum of an algebraically closed field. But in this case $\dim M^{\Hit}_x(\gamma)=\delta(a,x)=0$. By \cite[Corollaire 3.7.2 and Lemme 3.3.1]{NgoFL}, we conclude that $M^{\Hit}_x(\gamma)=M^{\Hit,\reg}_x(\gamma)$. This completes the proof of the lemma. 
\end{proof}

To describe $\tilGr_J$ over $\tcA^{\rs}$ more explicitly, we consider the global affine Grassmannian $\tilGr_T$ of the constant group scheme $T$ over $\tcA$. For any scheme $S$, $\tilGr_T(S)$ is the set of isomorphism classes of quadruples
$(a,\tilx,Q^T,\tau)$ where $(a,\tilx)\in\tcA(S)$, $Q^T$ is a $T$-torsor on $X_a$ and $\tau$ a trivialization of $Q^T$ on $X_a-\Gamma(\tilx)$.

The diagonal left action of $W$ on $T\times\tcA$ gives a $W$-action on $\tilGr_T$ over $\calA\times X$. On the other hand, $\tilGr_J$ also carries a left $W$-action induced from the left $W$-action on $\tcA$.

\begin{lemma}\label{l:JtoT}
There is a $W$-equivariant isomorphism of group ind-schemes over $\tcArs$:
\begin{equation*}
\jmath_{\Grass}:\tilGr_J^{\rs}\isom\tilGr_T^{\rs}.
\end{equation*}
\end{lemma}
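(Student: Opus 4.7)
\smallskip
\noindent\textbf{Proof proposal.}
The plan is to reduce everything to the formal disk and use Lemma \ref{l:mapJtoT} directly. Since a $J_a$-torsor on $\XS$ with trivialization away from $\Gamma(x)$ is determined by its restriction to a formal neighborhood of $\Gamma(x)$ (Beauville--Laszlo for the smooth commutative group scheme $J_a$), and analogously for $T$-torsors on $X_a$ near $\Gamma(\tilx)$, the fiber of $\tilGr_J$ at $(a,\tilx)\in\tcArs(S)$ is $J_a(\pdisk_{x,S})/J_a(\disk_{x,S})$ and the fiber of $\tilGr_T$ is $T(\pdisk_{\tilx,S})/T(\disk_{\tilx,S})$. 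So it suffices to produce a natural isomorphism between these two lattice quotients, compatible with group structure and $W$-action.

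Over $\tcArs$ the image $a(x)$ lies in $\frc^{\rs}_D$, so $a$ lands in $\frc^{\rs}$ on a neighborhood of $x$, and $q_a\colon X_a\to X$ is \'etale on a neighborhood of $\tilx$. In particular $q_a^{-1}(\disk_{x,S})=\bigsqcup_{w\in W}\disk_{w\tilx,S}$ as a disjoint union of $|W|$ formal disks. Pulling back by $q_a$ sends the map $a\colon\disk_{x,S}\to\frc_D$ to maps $\disk_{w\tilx,S}\to\frt_D$ landing in $\frt^{\rs}_D$, so Lemma \ref{l:mapJtoT} provides canonical isomorphisms $\jmath_{w\tilx}\colon q_a^*J_a|_{\disk_{w\tilx,S}}\isom T\times\disk_{w\tilx,S}$ for every $w$. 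Combined with \'etale descent, we obtain
\begin{equation*}
J_a(\disk_{x,S})=\Bigl(\prod_{w\in W}T(\disk_{w\tilx,S})\Bigr)^{W}\isom T(\disk_{\tilx,S}),
\end{equation*}
where the last isomorphism is projection to the $w=e$ factor (any $W$-invariant tuple is determined by any single coordinate, since the diagonal $W$-action is free and transitive on the indexing set). Exactly the same computation works with $\pdisk$ in place of $\disk$. Taking the quotient gives the desired isomorphism $\jmath_{\Grass}\colon\tilGr_J^{\rs}\isom\tilGr_T^{\rs}$ on fibers, and the construction visibly glues over varying $(a,\tilx)\in\tcArs(S)$.

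It remains to verify that $\jmath_{\Grass}$ is a homomorphism of group ind-schemes and is $W$-equivariant. The first is immediate: $\jmath_T$ is a homomorphism of group schemes, hence so are the induced maps on sections over disks and punctured disks, and the quotient map inherits this. For $W$-equivariance, note that the $W$-action on $\tilGr_J$ only permutes the choice of lift $\tilx$ of $x$, leaving the underlying $J_a$-torsor and trivialization untouched, while the $W$-action on $\tilGr_T$ is the diagonal of the Galois action on $X_a$ (sending $\disk_{\tilx}$ to $\disk_{w\tilx}$) and the standard action on $T$. The $W$-equivariance clause in Lemma \ref{l:mapJtoT} says precisely that $\jmath_{w\tilx}$ is identified with $(w\times w)\circ\jmath_{\tilx}$ under the Galois translation $\disk_{\tilx}\cong\disk_{w\tilx}$, which is exactly what is needed to intertwine the two $W$-actions after passing to $\pdisk/\disk$ quotients.

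The main technical nuisance I expect is keeping track of the correct $W$-equivariance conventions, specifically making sure that the descent isomorphism $J_a(\disk_{x,S})\cong(\prod_{w}T(\disk_{w\tilx,S}))^W$ is compatible with the diagonal $W$-structure as opposed to a twisted variant, and that the projection to the $w=e$ coordinate really does define a $W$-equivariant isomorphism when the target $T(\disk_{\tilx,S})$ is viewed as carrying the ``moving $\tilx$'' structure that appears in $\tilGr_T$. Everything else is essentially formal, hinging only on the two inputs that $\jmath_T$ is $W$-equivariant and is an isomorphism over $\frt^{\rs}$.
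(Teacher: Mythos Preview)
Your proposal is correct and rests on the same two inputs as the paper's proof: that $\jmath_T$ is an isomorphism over $\frt^{\rs}$ and that it is $W$-equivariant. The presentations differ slightly in flavor. The paper works globally with torsors: it pulls back $Q^J$ to a $T$-torsor $Q^T$ on all of $X_a$ via Construction \ref{cons:TtorsorP}, observes that $q_a^{-1}(\Gamma(x))=\bigsqcup_{w\in W}\Gamma(w\tilx)$, and then \emph{glues} $Q^T|_{X_a-\bigsqcup_{w\neq e}\Gamma(w\tilx)}$ with the trivial torsor on $X_a-\Gamma(\tilx)$ along $q_a^*\tau$, thereby killing the modifications at the points $w\tilx$ for $w\neq e$ and producing $(Q^T_1,\tau_1)\in\tilGr_T$. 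You instead invoke Beauville--Laszlo to reduce to the loop/arc quotient and then use \'etale descent along the $W$-cover to identify $J_a(\disk_{x,S})\cong T(\disk_{\tilx,S})$ by projecting a $W$-invariant tuple to its $w=e$ coordinate. Your ``project to $e$'' step is exactly the local shadow of the paper's gluing-with-trivial-torsor step, so the two constructions agree. The paper's version has the mild advantage of avoiding explicit mention of formal completions and descent, while yours makes the group-ind-scheme structure (loop quotient) more transparent; neither route requires anything the other does not.
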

\begin{proof}
For $(a,\tilx,Q^J,\tau)\in\tilGr_J(S)$ over $(a,\tilx)\in\tcArs(S)$, we get a $T$-torsor $Q^T$ over $X_a$ as in Construction \ref{cons:TtorsorP}. Since $a(x)\in\frc^{\rs}$, $q_a^{-1}(\Gamma(x))$ is a disjoint union
\begin{equation*}
q_a^{-1}(\Gamma(x))=\bigsqcup_{w\in W}\Gamma(w\tilx).
\end{equation*} 
The trivialization $\tau$ gives a trivialization $q_a^*\tau$ of $Q^T$ over $X_a-q_a^{-1}(\Gamma(x))$. We can glue the restriction of $Q^T$ to the open set $X_a-\bigsqcup_{w\neq e}\Gamma(w\tilx)$ with the trivial $T$-torsor over the open set $X_a-\Gamma(\tilx)$ via the trivialization $q_a^*\tau$. This gives a new $T$-torsor $Q^T_1$ over $X_a$ together with a tautological trivialization $\tau_1$ of $Q^T_1$ on $X_a-\Gamma(\tilx)$. We define the morphism $\jmath_{\Grass}$ by
\begin{equation*}
\jmath_{\Grass}(a,\tilx,Q^J,\tau)=(a,\tilx,Q^T_1,\tau_1)\in\tilGr_T(S).
\end{equation*}
The $W$-equivariance of $\jmath_{\Grass}$ follows from the $W$-equivariance of $\jmath$ in Lem. \ref{l:mapJtoT}. The fact that $\jmath_{\Grass}$ is an isomorphism follows from the fact that $\jmath$ is an isomorphism over $\frt^{\rs}$ (see Lem. \ref{l:mapJtoT}).
\end{proof}

It is well-known that the reduced structure of the Beilinson-Drinfeld Grassmannian $\Grass_{T}$ over a smooth curve is the constant group scheme $\xcoch(T)$. Therefore, the reduced structure of $\tilGr_T^{\rs}$ is the constant group scheme $\xcoch(T)$ over $\tcArs$. In other words, for each $\lambda\in\xcoch(T)$, we have a section $\tils_\lambda:\tcArs\to\tilGr_T^{\rs}$.

\begin{lemma}\label{l:extsection}
For each $\lambda\in\xcoch(T)$, the section $\tils_\lambda:\tcArs\to\tilGr_J^{\rs}$ extends to a section $\tils_\lambda:\tcA^0\to\tilGr_J^0$.
\end{lemma}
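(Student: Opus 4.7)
The morphism $\tilGr_J^0 \to \tcA^0$ is a group ind-scheme of ind-finite type, so it is separated, and any extension of a given section across $\tcA^0 - \tcA^{\rs}$ is automatically unique. Thus it suffices to exhibit the extension, which I will do by giving a direct construction of the desired $J_a$-torsor over $\disk_x$ starting from $(a,\tilx) \in \tcA^0(S)$ with image $(a,x) \in (\calA \times X)_0(S)$.

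First, I unpack the section over $\tcArs$. Given $(a,\tilx) \in \tcArs(S)$, the cocharacter $\lambda \in \xcoch(T)$ produces the $T$-torsor $Q^T_\lambda := \lambda_* \calO_{X_a}(\Gamma(\tilx))$ on $X_a$, together with the canonical trivialization away from $\Gamma(\tilx)$. Since $a(x) \in \frc^{\rs}$, the cameral cover $q_a$ is étale near $x$ and $q_a^{-1}(\Gamma(x)) = \bigsqcup_{w\in W} \Gamma(w\tilx)$, so the recipe in Lemma \ref{l:JtoT} (run in reverse via $\jmath_{\Grass}^{-1}$) converts $(Q^T_\lambda,\tau_\lambda)$ into the $J_a$-torsor representing $\tils_\lambda(a,\tilx)$ on $\disk_x$; concretely, the isomorphism $\jmath_T \colon J_\frt \isom T \times \frt$ over $\frt^{\rs}$ (Lemma \ref{l:mapJtoT}) pulls $Q^T_\lambda$ back to a $q_a^*J_a$-torsor on the cameral disk, which then descends under the $W$-action.

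To extend to $\tcA^0$, I mimic this construction without needing $q_a$ to be étale. The input is the following: on $(\calA\times X)_0$, Lemma \ref{l:nuisom} implies that every element of $\frg$ over $a(x)$ is regular, hence $\jmath_G$ (and thereby $\jmath_T$) exhibits $a^*J_D|_{\disk_x}$ as the appropriate twist of a Weil-restriction-type model for $T$ along $q_a$. Consequently, the $T$-torsor on $\disk_{x,a} := X_a \times_X \disk_x$ built from $\lambda$ and the divisor $\Gamma(\tilx)$ carries a canonical strong $W$-equivariant structure (compatible with the $W$-equivariance in Lemma \ref{l:mapJtoT}) that descends it to a $J_a$-torsor on $\disk_x$ together with its trivialization over $\pdisk_x$. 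On $\tcArs$ this reproduces the old $\tils_\lambda$ by construction.

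The main obstacle is verifying the descent step where $q_a$ ramifies at $x$: one must show that the $q_a^*J_a$-torsor produced from $(Q^T_\lambda, \tau_\lambda)$ on $\disk_{x,a}$ actually descends to a $J_a$-torsor on all of $\disk_x$, not merely generically. This reduces to a local calculation at a ramification point of $q_a$, and the hypothesis $\delta(a,x)=0$ is exactly what is needed: it forces $a(x) \in \frc$ to lift only to regular elements of $\frg$, so that $a^*J_D|_{\disk_x}$ is smooth and "Néron-like" for $q_a^*(T\times\disk_x)$, which is precisely the condition under which a strongly $W$-equivariant $T$-torsor on the cameral disk has its $W$-descent living in the $J_a$-torsors rather than in a larger ambient sheaf. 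Once this descent is established, the resulting section is independent of auxiliary choices (by uniqueness, since it agrees with $\tils_\lambda$ over the dense open $\tcArs$), completing the argument.
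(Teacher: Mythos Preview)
Your approach is genuinely different from the paper's, and as written it has a real gap at exactly the place you flag as the ``main obstacle.''

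The paper does \emph{not} construct the extension directly. Instead it takes the scheme-theoretic closure $Z_\lambda$ of $\tils_\lambda(\tcArs)$ inside $\tilGr_J^0$ and shows that the projection $Z_\lambda\to\tcA^0$ is an isomorphism. The argument runs by faithfully flat base change along $\calM^{\parab,0}\to\tcA^0$: the pullback $Z'_\lambda$ sits inside $\calM^{\parab,0}\times_{\tcA^0}\tilGr_J^0\cong\calH^0_{[e]}$ (Lemma~\ref{l:lattice}), hence is proper over $\calM^{\parab,0}$ because the Hecke correspondence is ind-proper; it is quasi-finite because the fibers lie in $P_x(J_a)$ with $\delta(a,x)=0$; and it is birational. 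Normality of $\calM^{\parab,0}$ then forces $Z'_\lambda\cong\calM^{\parab,0}$. No torsor is ever built by hand.

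Your direct-construction route could in principle work, but two steps are not justified. First, the $T$-torsor $Q^T_\lambda=\lambda_*\calO_{X_a}(\Gamma(\tilx))$ on the cameral disk does \emph{not} carry a canonical strong $W$-equivariant structure: $w^*Q^T_\lambda\cong\lambda_*\calO(\Gamma(w^{-1}\tilx))$ while the $W$-twist on the $T$-side gives $(w\lambda)_*\calO(\Gamma(\tilx))$, and these differ. Over $\tcArs$ this is harmless because $\jmath_{\Grass}^{-1}$ is available as an honest inverse, but at a ramification point you have no such isomorphism and you have not said what object you are descending. Second, the sentence ``$\delta(a,x)=0$ forces $a(x)$ to lift only to regular elements of $\frg$'' is false as stated (any $a(x)$ in the discriminant has non-regular lifts), and even the correct version (every Higgs field over $a$ is regular at $x$, via Lemma~\ref{l:nuisom}) is a statement about $\MHit$, not about the group scheme $J_a|_{\disk_x}$. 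What you actually need is that $J_a|_{\disk_x}$ is the N\'eron model of its generic fiber when $\delta(a,x)=0$; this is true but requires its own argument, and ``N\'eron-like'' is not a substitute for it. Absent that, your descent step is an assertion, not a proof.
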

\begin{proof}
Let $Z_\lambda$ be the scheme-theoretic closure of the image $s_\lambda(\tcArs)$ in $\tilGr_J^0$. We only need to show that the projection induces an isomorphism $Z_\lambda\cong\tcA^0$. Let $Z'_\lambda=\calM^{\parab,0}\times_{\tcA^0}Z_\lambda$ be the closed substack of $\calM^{\parab,0}\times_{\tcA^0}\tilGr_J^0$. Since $\calM^{\parab,0}$ is faithfully flat over $\tcA^0$ by Cor. \ref{c:flat}, it suffices to show that $Z'_\lambda\cong\calM^{\parab,0}$ via the projection. Since $Z'_\lambda$ is flat over $Z_\lambda$, $Z'_\lambda$ is also the closure of $Z'^{\rs}_\lambda$. 

We first claim that $Z'_\lambda$ is proper over $\calM^{\parab,0}$. In fact, by Lem. \ref{l:lattice}, $\calH^0_{[e]}$ is isomorphic to the product $\calM^{\parab,0}\times_{\tcA^0}\tilGr_J^0$. Because $\calH_{[e]}$ is the inductive limit of proper substacks over $\Mpar$ by Lem. \ref{l:heckeft} and $Z'^{\rs}_\lambda$ (hence $Z'_\lambda$) is contained in one of these substacks, $Z'_\lambda$ is also proper over $\calM^{\parab,0}$.

We then claim that $Z'_\lambda$ is quasi-finite over $\calM^{\parab,0}$. In fact, its geometric fibers over $\calM^{\parab,0}$ are contained in $P_x(J_a)$ for $\delta(a,x)=0$, hence having dimension 0.

Now $Z'_\lambda$ is both proper and quasi-finite over $\calM^{\parab,0}$, it is therefore finite over $\calM^{\parab,0}$. Moreover, the projection $Z'_\lambda\to\calM^{\parab,0}$ is an isomorphism over the dense open subset $\Mparrs$. We conclude that $Z'_\lambda\cong\calM^{\parab,0}$ because $\calM^{\parab,0}$ is normal. This completes the proof.
\end{proof}

\begin{remark}\label{rm:slambda}
From Lem. \ref{l:extsection}, we see that each $\lambda\in\xcoch(T)$ gives a morphism
\begin{equation*}
s_\lambda:\tcA^{0}\xrightarrow{\tils_\lambda}\tilGr_{J}\to\calP
\end{equation*}
where the last arrow is the forgetful morphism (using the moduli meaning of $\tilGr_J$). Moreover, for any $w\in W$, we have
\begin{equation}\label{eq:swx}
s_{w\lambda}(\tila)=s_{\lambda}(w^{-1}\tila)
\end{equation}
for all $\tila\in\tcA$. In fact, this follows from the $W$-equivariance of the isomorphism $\jmath_{\Grass}$ in Lem. \ref{l:JtoT}.
\end{remark}

\begin{cor}\label{c:geomaction}
There exists a {\em right} action of $\tilW$ on $\calM^{\parab,0}$ over $(\calA\times X)_0$ such that the reduced structure of $\Heckeprs$ is the disjoint union of the graphs of this $\tilW$-action.
\end{cor}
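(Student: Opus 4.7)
The plan is to assemble the right $\tilW$-action on $\calM^{\parab,0}$ out of a right $W$-action and a right $\xcoch(T)$-action, verify their single compatibility relation using (\ref{eq:swx}), and then read off the graph decomposition of the reduced structure of $\Heckeprs$ from Lemmas \ref{l:lattice} and \ref{l:JtoT}.

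First I would build the two pieces. Using the identification $\nu_{\calM}:\calM^{\parab,0}\isom\MHit\times_{\calA}\tcA^0$ from Lemma \ref{l:nuisom}, let $W$ act on the right by $\tila\cdot w:=w^{-1}\tila$ on the $\tcA^0$ factor (and trivially on the $\MHit$ factor); let $\lambda\in\xcoch(T)$ act by $m\cdot t^\lambda:=s_\lambda(\tilf(m))\cdot m$, where $s_\lambda:\tcA^0\to\calP$ is the section of Remark \ref{rm:slambda} and the dot denotes the $\calP$-action on $\Mpar$ (which preserves $\tilf$ by Lemma \ref{l:pretcA}). Since $\calP$ is a commutative Picard stack, this yields a right $\xcoch(T)$-action. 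To combine these into a right action of $\tilW=\xcoch(T)\rtimes W$, I only need to check the commutation
\begin{equation*}
(m\cdot w)\cdot t^\lambda=(m\cdot t^{w\lambda})\cdot w
\end{equation*}
corresponding to $wt^\lambda=t^{w\lambda}w$ in $\tilW$. The LHS shifts $m\cdot w$ by $s_\lambda(w^{-1}\tilf(m))$, the RHS shifts it by $s_{w\lambda}(\tilf(m))$, and these agree by the $W$-equivariance identity (\ref{eq:swx}) already noted in Remark \ref{rm:slambda}.

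For the graph decomposition, restrict to the regular semisimple locus and work one $w\in W$ at a time. Lemma \ref{l:lattice} exhibits $\Hecke^{\parab,0}_{[w]}$ as a left $\tilGr_J^0$-torsor over $\calM^{\parab,0}$ via $\overleftarrow{h}$, trivialized by the section coming from $\Hecke^{\St,0}_w$; under this trivialization $\overrightarrow{h}$ applied to the trivializing section is the right $w$-action on $\calM^{\parab,0}$ (Lemma \ref{l:Wgraph}). Over $\tcArs$, Lemma \ref{l:JtoT} identifies $\tilGr_J^{\rs}$ with $\tilGr_T^{\rs}$, whose reduced structure is the constant group scheme $\xcoch(T)$ on $\tcArs$. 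Hence the reduced structure of $\Hecke^{\parab,\rs}_{[w]}$ is a disjoint union, over $\lambda\in\xcoch(T)$, of sections of $\overleftarrow{h}$, and under $\overrightarrow{h}$ the $\lambda$-section is the graph of
\begin{equation*}
m\mapsto s_\lambda(w^{-1}\tilf(m))\cdot(m\cdot w)=s_{w\lambda}(\tilf(m))\cdot(m\cdot w)=m\cdot(t^{w\lambda}w),
\end{equation*}
where the first equality uses (\ref{eq:swx}) and the second the definition of the $\tilW$-action. As $(\lambda,w)$ varies over $\xcoch(T)\times W$, $t^{w\lambda}w$ runs bijectively over $\tilW$, which completes the identification of the reduced structure of $\Heckeprs$ with the disjoint union of the graphs of the right $\tilW$-action.

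The only substantive obstacle is the semidirect-product compatibility in step two, but this reduces directly to the $W$-equivariance of the sections $\tils_\lambda$ already established via $\jmath_{\Grass}$ in Lemmas \ref{l:JtoT} and \ref{l:extsection}. Everything else is formal unpacking of the torsor structure on $\Hecke^{\parab,0}_{[w]}$ and the identification of $\tilGr_J^{\rs}$ with the constant group scheme $\xcoch(T)\times\tcArs$.
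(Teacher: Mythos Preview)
Your proposal is correct and follows essentially the same approach as the paper: define the right $\tilW$-action on $\calM^{\parab,0}\cong\MHit\times_{\calA}\tcA^0$ by combining the $W$-action on the $\tcA^0$-factor with the $\xcoch(T)$-action via the sections $s_\lambda$, verify the semidirect product relation using (\ref{eq:swx}), and then read off the graph decomposition of $\Heckeprs$ from Lemmas \ref{l:Wgraph}, \ref{l:lattice}, and \ref{l:JtoT}. Your write-up is in fact slightly more explicit than the paper's in tracking how the $\tilGr_J$-translates of $\Hecke^{\St,\rs}_w$ match up with specific elements of $\tilW$ under $\overrightarrow{h}$, but the argument is the same.
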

\begin{proof}
We first define the right $\tilW$-action. Note that $\calM^{\parab,0}=\MHit\times_{\calA}\tcA^{0}$. For $(\lambda,w)\in\tilW$, we define its action on $(m,\tilx)\in\calM^{\parab,0}(S)=(\MHit\times_{\calA}\tcA^{0})(S)$ by
\begin{equation}\label{eq:rightaction}
(m,\tila)\cdot(\lambda,w):=(s_\lambda(\tila)m,w^{-1}\tila)
\end{equation}
where the action of $s$ on $m$ is given by the action of $\calP_a$ on $\MHit_a$ ($a$ is the image of $\tila$ in $\calA$). Using the relation (\ref{eq:swx}), it is easy to check that (\ref{eq:rightaction}) indeed gives a right action of $\tilW$: here we are using the fact that $\calP$ is commutative.

Next we verify that the reduced structure of $\Heckeprs$ is the disjoint union of the graphs of the right $\tilW$-action on $\calM^{\parab,0}$. By Lem. \ref{l:Wgraph}, $\Hecke^{\St,0}_w$ is the graph of the right $w$-action. By Lem. \ref{l:lattice}, the reduced structure of $\Heckeprs_{[w]}$ is the disjoint union of the $\xcoch(T)$-translations of $\Hecke^{\St,\rs}_w$. In other words, the reduced structure of $\Heckeprs_{[w]}$ is the disjoint union of the graphs of $(\lambda,w)$ for $\lambda\in\xcoch(T)$. This completes the proof.
\end{proof}

\begin{defn}\label{def:redHecke}
For each $\tilw\in\tilW$, the {\em reduced Hecke correspondence} $\calH_{\tilw}$ indexed by $\tilw$ is the closure (in $\Heckep$) of the graph of the right $\tilw$-action constructed in Cor. \ref{c:geomaction}.
\end{defn}

\begin{exam}
We describe the $\tilW$-action on $\calM^{\parab,0}$ in the case $G=\GL(n)$. We continue with the notation in Example \ref{ex:gl}. Notice that $(a,x)\in(\calA\times X)_0$ if and only if the spectral curve $Y_a$ is smooth at the points $p_a^{-1}(x)$. In this case, the parabolic Hitchin fiber $\Mpar_{a,x}$ consists of $\calF_0\in\overline{\stPic}(Y_a)$ and an ordering $(y_1,\cdots,y_n)$ of the $n$ points $p_a^{-1}(x)$ with multiplicities. For $w\in W=S_n$, its action on $\Mpar_{a,x}$ is the permutation action on the multi-set $p_a^{-1}(x)$, i.e., the change of the ordering. For $\lambda=(\lambda_1,\cdots,\lambda_n)\in\ZZ^n=\xcoch(T)$, its action on $\Mpar_{a,x}$ is given by tensoring $\calF_0$ with the line bundle $\calO_{Y_a}(\lambda_1y_1+\cdots+\lambda_ny_n)$, leaving the ordering unchanged.   
\end{exam}


\subsection{The affine Weyl group action on the parabolic Hitchin complex}\label{ss:main}

\begin{defn}\label{def:parcomplex}
The direct image complex $\fQl$ (resp. $\tfQl$) of the constant sheaf under the parabolic Hitchin fibration (resp. enhanced parabolic Hitchin fibration) is called the {\em parabolic Hitchin complex} (resp. the {\em enhanced parabolic Hitchin complex}).
\end{defn}

In this subsection, we prove the main result of this paper, namely, we construct an action of the extended affine Weyl group $\tilW$ on the parabolic Hitchin complex. This construction is the basis of all the subsequent development of the global Springer theory, and also justifies the title of the Thesis. We also construct a variant of this global Springer action in terms for the enhanced parabolic Hitchin complex. 

\subsubsection{The extended affine Weyl group action}
We apply the discussions in Sec. \ref{ss:grlike} to the situation where $S=\calA\times X$, $U=(\calA\times X)^{\rs}$, $X=\Mpar$ and the reduced Hecke correspondences $C=\calH_{\tilw}$ for each $\tilw\in\tilW$. Note that $\calH_{\tilw}$ is a graph-like correspondence: $\calH_{\tilw}|_U=\calH_{\tilw}^{\rs}$ is a graph (Cor. \ref{c:geomaction}), and $\calH_{\tilw}$ is the closure of $\calH^{\rs}_{\tilw}$. By the discussion in Sec. \ref{ss:grlike}, we get a map
\begin{equation}\label{eq:defaction}
[\calH_{\tilw}]_\#:\fQl\to\fQl.
\end{equation}

The main theorem of this paper is

\begin{theorem}\label{th:action} 
The assignment $\tilw\mapsto[\calH_{\tilw}]_\#$ for $\tilw\in\tilW$ gives a left action of $\tilW$ on $\fQl$.
\end{theorem}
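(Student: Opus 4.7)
The plan is to verify the two defining relations of a group action: first $[\calH_e]_\# = \id$, which is immediate since $\calH_e$ is the diagonal; second, the multiplicativity
\[
[\calH_{\tilw_1}]_\# \circ [\calH_{\tilw_2}]_\# \;=\; [\calH_{\tilw_1\tilw_2}]_\# \qquad \text{for all } \tilw_1,\tilw_2\in\tilW.
\]
To prove the second, I would invoke the convolution formalism for cohomological correspondences developed in App.~\ref{s:corr}: the composition of the two endomorphisms equals $(\mu_*([\calH_{\tilw_1}]*[\calH_{\tilw_2}]))_\#$, where $*$ denotes convolution and $\mu:\calH_{\tilw_1}*\calH_{\tilw_2}\to\Mpar\times_{\calA\times X}\Mpar$ is the structural map. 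The task thus becomes comparing $\mu_*([\calH_{\tilw_1}]*[\calH_{\tilw_2}])$ with $[\calH_{\tilw_1\tilw_2}]$ as cohomological correspondences.

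Over the regular semisimple locus $(\calA\times X)^{\rs}$, Cor.~\ref{c:geomaction} identifies each $\calH_{\tilw}^{\rs}$ with the graph of the geometric right $\tilw$-action on $\calM^{\parab,\rs}$. Because these graphs compose transversally in the expected way (right group actions compose into graphs of products), we get the generic identity
\[
\mu_*\bigl([\calH_{\tilw_1}^{\rs}]*[\calH_{\tilw_2}^{\rs}]\bigr) \;=\; [\calH_{\tilw_1\tilw_2}^{\rs}]
\]
in $\Corr(\calH_{\tilw_1\tilw_2}^{\rs};\Ql,\Ql)$, exactly as in Construction~\ref{cons:claction2} of the classical case. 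This is the graph-theoretic heart of the argument and is essentially a tautology.

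The problem is to propagate this identity from the regular semisimple locus to all of $\calA\times X$. Since each $\calH_{\tilw}$ is by definition the closure of $\calH_{\tilw}^{\rs}$, i.e.~it is a \emph{graph-like} correspondence in the sense of subsection~\ref{ss:grlike}, the appendix's formalism provides a uniqueness criterion: a cohomological correspondence on a graph-like cycle is determined by its restriction to the open graph locus. Applied to $\calH_{\tilw_1\tilw_2}$, this would upgrade the generic equality above to a global equality once one knows that the error term $\mu_*([\calH_{\tilw_1}]*[\calH_{\tilw_2}]) - [\calH_{\tilw_1\tilw_2}]$ is supported outside $(\calA\times X)^{\rs}$ and is of a form to which the vanishing criterion applies.

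The main obstacle, as in most such ``composition of correspondences'' arguments, is controlling the convolution $\calH_{\tilw_1}*\calH_{\tilw_2}$ away from the generic locus: it may fail to be irreducible, or may meet the non-generic strata in too-large dimension, producing spurious cycle contributions. This is precisely where the codimension estimate of Cor.~\ref{c:localdel} intervenes: the bad locus $(\calA\times X)_\delta$ is of codimension at least $\delta+1\geq 2$ for $\delta\geq 1$. A dimension count shows that any extraneous irreducible component of $\calH_{\tilw_1}*\calH_{\tilw_2}$, or excess-intersection contribution over it, lives in the fiber over this bad locus; by the graph-like formalism (where the action $[\cdot]_\#$ is insensitive to such cycles of sufficiently high codimension), these contributions produce the zero endomorphism of $\fQl$. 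The technical heart of the proof is therefore reducing the global equality to a statement about restriction to the regular semisimple locus, and then combining the graph-like machinery of App.~\ref{s:corr} with the codimension bound of Cor.~\ref{c:localdel} to kill all boundary corrections.
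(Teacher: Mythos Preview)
Your proposal is correct and follows essentially the same route as the paper: reduce to the regular semisimple locus where the $\calH_{\tilw}$ are graphs of a genuine group action, use the codimension estimate (Cor.~\ref{c:localdel}) to establish condition (G-2), and invoke the graph-like formalism of App.~\ref{s:corr} to propagate.

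The only organizational difference is that the paper packages the argument more cleanly: rather than checking $[\calH_{\tilw_1}]_\#\circ[\calH_{\tilw_2}]_\#=[\calH_{\tilw_1\tilw_2}]_\#$ element by element, it works with the ambient ind-correspondence $\Heckep$, which carries an associative convolution $\mu:\Heckep*\Heckep\to\Heckep$. One then shows (Lem.~\ref{l:HeckeG2}) that every finite-type piece of $\Heckep$ satisfies (G-2), so Prop.~\ref{p:alghom} gives an algebra homomorphism $\Corr(\Heckep;\Ql,\Ql)\to\End_{\calA\times X}(\fQl)$ that factors through restriction to $(\calA\times X)^{\rs}$. Since $\Ql[\tilW]$ maps to $\Corr(\Heckeprs;\Ql,\Ql)$ as an algebra (graphs of a group action), the composite is the desired $\tilW$-action. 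This avoids having to directly control the dimension of $\calH_{\tilw_1}*\calH_{\tilw_2}$: the image of the composition lands in $\Heckep$ anyway, so Lem.~\ref{l:HeckeG2} handles it. Your direct dimension count on the composition would require this observation to go through, since the naive bound on the fibers of $\calH_{\tilw_1}*\calH_{\tilw_2}$ is $2\delta$, not $\delta$.
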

\begin{proof}
By definition, the full Hecke correspondence $\Heckep$ has a natural convolution structure $\mu:\Heckep*\Heckep\to\Heckep$ by forgetting the middle $\Mpar$, which is obviously associative. Let $\Corr(\Heckep;\Ql,\Ql)$ be the direct limit
\begin{equation*}
\Corr(\Heckep;\Ql,\Ql):=\varinjlim_{V\subset\tilW}\Corr(\beta^{-1}(\Hecke^{\Bun}_V);\Ql,\Ql)
\end{equation*}
where $\Hecke^{\Bun}_V$ is the union of $\Hecke^{\Bun}_{\tilw}$ for $\tilw\in V$ and $V$ runs over all finite subsets of $\tilW$ such that $\Hecke^{\Bun}_V$ is closed in $\Hecke^{\Bun}$. Likewise we can define $\Corr(\Heckeprs;\Ql,\Ql)$ as a direct limit. The discussions of Sec. \ref{ss:convalg} can be applied to these direct limit situations, so that $\Corr(\Heckep;\Ql,\Ql)$ and $\Corr(\Heckeprs;\Ql,\Ql)$ have natural algebra structures given by convolutions. 

By Cor. \ref{c:geomaction}, $\calH^{\rs}$ is the disjoint union of graphs of the right $\tilW$-action on $\Mparrs$, therefore we have an algebra homomorphism
\begin{equation}\label{eq:homtilW}
\Ql[\tilW]\to\cohog{0}{\Heckeprs,\DD_{\overleftarrow{h}}}\cong \cohog{0}{\calH^{\rs},\DD_{\overleftarrow{h}}}=\Corr(\calH^{\rs};\Ql,\Ql).
\end{equation}
which sends $\tilw$ to $[\calH^{\rs}_{\tilw}]$. Here $\Ql[\tilW]$ is the group algebra of $\tilW$.

By Lem. \ref{l:HeckeG2} below, any finite type substack of $\Heckep$ satisfies the condition (G-2) in Def. \ref{def:grlike} with respect to $(\calA\times X)^{\rs}\subset\calA\times X$. Therefore, by Prop. \ref{p:alghom}, the action of $\Corr(\Heckep;\Ql,\Ql)$ on $\fQl$ factors through $\Corr(\Heckeprs;\Ql,\Ql)$; by the homomorphism (\ref{eq:homtilW}), we get an action of $\tilW$ on $\fQl$ sending $\tilw$ to $[\calH_{\tilw}]_\#$. The theorem is proved.
\end{proof}

\begin{lemma}\label{l:HeckeG2}
Any finite type substack of $\Heckep$ satisfies the condition (G-2) in Def. \ref{def:grlike} with respect to $(\calA\times X)^{\rs}\subset\calA\times X$.
\end{lemma}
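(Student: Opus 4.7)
The plan is to verify condition (G-2) by combining the fiber description in Lemma \ref{l:hfiber} with the codimension estimate of Corollary \ref{c:localdel}. Condition (G-2) demands, for the graph-like correspondence, that the "bad" strata of the base --- where $\calH'$ fails to be a graph --- have small enough fibers relative to their codimension in $\calA\times X$, so that $[\calH']_\#$ is well-defined and depends only on the generic part (this is the analogue of the smallness of $\pi$ in Construction \ref{cons:claction2}).

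First, fix a finite type substack $\calH'\subset\Heckep$. By Lemma \ref{l:hfiber}, for any $k$-point $m=(x,\calE,\varphi,\calE^{B}_x)\in\Mpar$ lying over $(a,x)\in\calA\times X$, the fibers of the projections $\overleftarrow{h},\overrightarrow{h}:\Heckep\to\Mpar$ over $m$ are isomorphic to the affine Springer fiber $M^{\parab}_x(\gamma_{a,x})\subset\Flag_{G,x}$. The corresponding fibers of $\overleftarrow{h}',\overrightarrow{h}':\calH'\to\Mpar$ are therefore closed sub-ind-schemes of $M^{\parab}_x(\gamma_{a,x})$ contained in a bounded union of Schubert closures $\Flag_{G,x,\leq\tilw}\cap M^{\parab}_x(\gamma_{a,x})$ (since $\calH'$, being of finite type, is pulled back from a finite union of $\Hecke^{\Bun}_V$'s, cf. Lemma \ref{l:heckeft}). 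The reduced dimension of such a finite type piece is at most $\delta(a,x)$, the local Serre invariant, by Kazhdan--Lusztig's dimension formula for affine Springer fibers \cite{KL88}, applied to the parabolic setting via the product formula Proposition \ref{p:prod} (which reduces $\dim M^{\parab}_x(\gamma_{a,x})$ to $\dim M^{\Hit}_x(\gamma_{a,x})=\delta(a,x)$ up to a zero-dimensional parabolic correction when $\gamma_{a,x}$ is regular semisimple, as it is for $a\in\Aa$).

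Next, Corollary \ref{c:localdel} gives $\codim_{\calA\times X}(\calA\times X)_\delta\geq\delta+1$ for $\delta\geq 1$, so the fiber dimension $\delta$ of $\overleftarrow{h}'$ (and $\overrightarrow{h}'$) over the stratum $(\calA\times X)_\delta$ is strictly less than the codimension $\delta+1$. Over the open subset $(\calA\times X)^{\rs}=(\calA\times X)_0\cap\text{reg.sem.}$, Corollary \ref{c:geomaction} shows that $\calH'|_{(\calA\times X)^{\rs}}$ is a disjoint union of graphs, hence automatically graph-like. Translating the strict inequality "fiber dim $<$ codim of stratum" into the language of Definition \ref{def:grlike} (which, as in the small-map characterization of the Steinberg case, is precisely this condition) yields condition (G-2) for $\calH'$.

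The main technical obstacle is the dimension bound $\dim M^{\parab}_x(\gamma_{a,x})\leq\delta(a,x)$, or more precisely, the bound on the finite type pieces occurring in $\calH'$. I expect this to follow cleanly from the product formula Proposition \ref{p:prod}, which lets one separate the local affine Springer fiber contribution from the global Picard action and reduce to the Kazhdan--Lusztig/Ng\^o dimension formula at each bad point; everything else in the argument is a direct application of already-established lemmas.
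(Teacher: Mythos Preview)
Your approach is essentially the same as the paper's: stratify by the local invariant $\delta$, bound the fiber dimension of $\overleftarrow{h}$ by $\delta(a,x)$ via Lemma~\ref{l:hfiber}, and play this against the codimension estimate of Corollary~\ref{c:localdel}. Two points need tightening.

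First, you have not covered the stratum $V=(\calA\times X)_0\setminus(\calA\times X)^{\rs}$. Your case split handles $\delta\geq1$ and the open locus $(\calA\times X)^{\rs}$, but (G-2) concerns the complement of $(\calA\times X)^{\rs}$, which also contains $V$. The paper treats $V$ separately: there the fibers of $\overleftarrow{h}$ are zero-dimensional (since $\delta=0$) while $\codim_{\calA\times X}V\geq1$, so the same inequality goes through. You should add this sentence.

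Second, the translation of ``fiber dimension $<$ codimension of stratum'' into (G-2) is not automatic: (G-2) is a statement about $\dim\calH'$ versus $d=\dim\Mpar$, not about codimensions in $\calA\times X$. The paper uses flatness of $\fpar$ (Corollary~\ref{c:flat}) to convert $\codim_{\calA\times X}(\calA\times X)_\delta\geq\delta+1$ into $\codim_{\Mpar}\Mpar_\delta\geq\delta+1$, and then adds the fiber bound. You should make this step explicit.

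Finally, the product formula is a detour here: the bound $\dim M^{\parab}_x(\gamma_{a,x})=\delta(a,x)$ is directly the Kazhdan--Lusztig dimension formula (cf.\ the proof of Corollary~\ref{c:dim}), and you do not need to pass through global Hitchin fibers.
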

\begin{proof}
Fix an integer $\delta>0$, let $\Mpar_\delta\subset\Mpar$ and $\Heckep_\delta\subset\Heckep$ be the pre-images of $(\calA\times
X)_{\delta}$. The relative dimension of $\overleftarrow{h}:\Heckep_{\delta}\to\Mpar_{\delta}$ is less or equal to $\delta$ because the fibers of $\overleftarrow{h}$ are affine Springer fibers of dimension $\delta$ by Lem. \ref{l:hfiber}. By Cor.
\ref{c:localdel}, $\codim_{\calA\times X}((\calA\times X)_{\delta})\geq\delta+1$. Since $\fpar$ is flat, we have $\codim_{\Mpar}(\Mpar_\delta)\geq\delta+1$. Therefore we conclude $\dim\Heckep_{\delta}\leq\dim\Mpar-\delta-1+\delta=\dim\Mpar-1$.

For $\delta=0$, let $V=(\calA\times X)_0-(\calA\times X)^{\rs}$ and $\Mpar_V\subset\Mpar, \Heckep_V\subset\Heckep$ be the pre-images. Then obviously $\codim_{\calA\times X}(V)=\codim_{\Mpar}(\Mpar_V)\geq1$. Since the fibers of $\overleftarrow{h}$ are zero-dimensional over $V$, we can still conclude that $\dim\Heckep_V\leq\dim\Mpar-1$. 
\end{proof}

\subsubsection{The enhanced action} For each $\tilw\in\tilW$ with image $w\in W$ under the projection $\tilW\to W$, the correspondence $\calH_{\tilw}$ can be viewed as a correspondence over $\tcA$, which we denote by $\calHn_{\tilw}$:
\begin{equation*}
\xymatrix{ & \calHn_{\tilw} \ar[dl]\ar[dr] & \\
 \Mpar\ar[dr]_{R_w\circ\tilf} & & \Mpar\ar[dl]^{\tilf}\\
& \tcA & }
\end{equation*}
Here we have composed the left hand side $\tilf$ with the right action of $w$ on $\tcA$ (denoted by $R_w$). Therefore $[\calHn_{\tilw}]_\#$ induces an isomorphism
\begin{equation}\label{eq:enhancefclass}
[\calHn_{\tilw}]_\#:\tfQl\to R_{w,*}\tfQl.
\end{equation}

\begin{prop}\label{p:enhancedaction} The maps $[\calHn_{\tilw}]_\#$ in (\ref{eq:enhancefclass}) give a $\tilW$-equivariant structure on $\tfQl$, compatible with the right $\tilW$-action on $\tcA$ through the quotient $\tilW\to W$.
\end{prop}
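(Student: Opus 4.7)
The strategy mirrors Theorem \ref{th:action}, carried out over the enhanced base $\tcA$ rather than over $\calA\times X$. The key input is the following compatibility, which follows directly from formula (\ref{eq:rightaction}): for $\tilw=(\lambda,w)\in\tilW$, the right $\tilw$-action on $\Mparrs$ intertwines $\tilf$ with the right $w$-action $R_w$ on $\tcArs$ (the lattice component $\lambda$ only affects the vertical $\calP$-direction). Consequently $\calHn_{\tilw}$ is a self-correspondence of $\Mpar$ over $\tcA$, with left leg $R_w\circ\tilf$ and right leg $\tilf$, that is graph-like in the sense of Sec. \ref{ss:grlike}: over $\tcArs$ it is the graph of right multiplication by $\tilw$ on $\Mparrs$, and $\calHn_{\tilw}$ is its closure.

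First I would apply the cohomological-correspondence formalism of Appendix \ref{s:corr} to obtain $[\calHn_{\tilw}]_\#:\tfQl\to R_{w,*}\tfQl$. Condition (G-1) holds by construction, and Condition (G-2) follows by the same codimension argument as in Lem. \ref{l:HeckeG2}, since the finite cover $q:\tcA\to\calA\times X$ preserves codimensions and the local Serre invariant estimate of Cor. \ref{c:localdel} is invariant under pullback along $q$. Next I would verify the cocycle relation witnessing the $\tilW$-equivariant structure. The convolution $\calHn_{\tilw_1}*\calHn_{\tilw_2}$ — fiber product over the middle $\Mpar$, whose right structure map $\tilf$ of the first factor matches the left structure map $R_{w_1}\circ\tilf$ of the second — is again a self-correspondence of $\Mpar$ over $\tcA$ whose left leg is $R_{w_1 w_2}\circ\tilf$. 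Over $\tcArs$ this convolution realizes the graph of right multiplication by $\tilw_1\tilw_2$, by exactly the same explicit check as in the proof of Theorem \ref{th:action}. An application of Prop. \ref{p:alghom}, adapted to the twisted-pushforward setting in which the convolution algebra acts on $\tfQl$ through the $R_{w,*}$ structure, then yields the required cocycle identity $[\calHn_{\tilw_1\tilw_2}]_\# = R_{w_1,*}[\calHn_{\tilw_2}]_\#\circ [\calHn_{\tilw_1}]_\#$.

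The main obstacle is the bookkeeping for convolution of graph-like correspondences over $\tcA$ whose two legs involve different composites with $R_w$: composing $[\calHn_{\tilw_1}]_\#$ with $[\calHn_{\tilw_2}]_\#$ requires first applying $R_{w_1,*}$ to the latter so that domain and codomain match, and one must verify that the resulting composite computes the fundamental class of the convolution on the nose (rather than up to some correction). Conceptually, however, nothing new beyond Theorem \ref{th:action} is required, and the compatibility with Theorem \ref{th:action} after pushforward along $q$ is automatic, since $\calHn_{\tilw}$ and $\calH_{\tilw}$ are the same underlying substack of $\Mpar\times\Mpar$ and $q$ is $W$-equivariant through the quotient $\tilW\twoheadrightarrow W$.
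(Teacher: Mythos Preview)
Your strategy is the paper's strategy: view each $\calHn_{\tilw}$ as a graph-like self-correspondence of $\Mpar$ over $\tcA$ (with twisted left leg), invoke the codimension estimate behind Lem.~\ref{l:HeckeG2} to get (G-2), and reduce the cocycle identity to the locus $\tcArs$ where everything is an honest graph. Two points deserve correction.

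First, your cocycle is written in the wrong order. The map $[\calHn_{\tilw}]_\#$ goes $\tfQl\to R_{w,*}\tfQl$, so the composable pair is
\[
\tfQl\xrightarrow{[\calHn_{\tilw_2}]_\#} R_{w_2,*}\tfQl\xrightarrow{R_{w_2,*}[\calHn_{\tilw_1}]_\#} R_{w_2,*}R_{w_1,*}\tfQl=R_{w_1w_2,*}\tfQl,
\]
and the identity to be proved is $[\calHn_{\tilw_1\tilw_2}]_\#=R_{w_2,*}[\calHn_{\tilw_1}]_\#\circ[\calHn_{\tilw_2}]_\#$. Your version $R_{w_1,*}[\calHn_{\tilw_2}]_\#\circ[\calHn_{\tilw_1}]_\#$ lands in $R_{w_2w_1,*}\tfQl$, which is the wrong target. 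Relatedly, your description of which legs match in the convolution is garbled: the right leg of $\calHn_{\tilw_1}$ is $\tilf$ while the left leg of $\calHn_{\tilw_2}$ is $R_{w_2}\circ\tilf$, and these do \emph{not} coincide. The paper handles this by introducing the twisted correspondence $\calH^{\natural,w_2}_{\tilw_1}$ (same underlying stack as $\calHn_{\tilw_1}$, but with both legs post-composed with $R_{w_2}$), so that its right leg $R_{w_2}\circ\tilf$ matches the left leg of $\calHn_{\tilw_2}$; then $[\calH^{\natural,w_2}_{\tilw_1}]_\#=R_{w_2,*}[\calHn_{\tilw_1}]_\#$ tautologically.

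Second, Prop.~\ref{p:alghom} does not apply here, even ``adapted'': it concerns a single self-correspondence $C$ over a fixed base with an associative $\mu:C*C\to C$, whereas you have a family of correspondences with varying (twisted) structure maps to $\tcA$. The paper instead applies Prop.~\ref{p:comp} to the composable pair $(\calH^{\natural,w_2}_{\tilw_1},\calHn_{\tilw_2})$ to get $[\calH^{\natural,w_2}_{\tilw_1}]_\#\circ[\calHn_{\tilw_2}]_\#=[\calH^{\natural,w_2}_{\tilw_1}*\calHn_{\tilw_2}]_\#$, and then Lem.~\ref{l:openpart} to identify the latter with $[\calHn_{\tilw_1\tilw_2}]_\#$ since the two correspondences agree over $\tcArs$. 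This is exactly the ``bookkeeping'' you flag as the main obstacle, and once you swap the order and invoke the right lemmas the argument goes through.
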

\begin{proof}
To check that (\ref{eq:enhancefclass}) does give an equivariant structure, we pick two elements $\tilw_1,\tilw_2\in\tilW$ with projections $w_1,w_2\in W$. We have to show that the following diagram is commutative
\begin{equation}\label{eq:enhancecomm}
\xymatrix{\tfQl\ar[rr]^{[\calHn_{\tilw_1\tilw_2}]_\#}\ar[dr]_{[\calHn_{\tilw_2}]_\#} & & R_{w_2,*}R_{w_1,*}\tfQl\ar@{=}[r] & R_{w_1w_2,*}\tfQl\\
& R_{w_2,*}\tfQl\ar[ur]_{R_{w_2,*}[\calHn_{\tilw_1}]_\#}} 
\end{equation}
Let $\calH^{\natural,w_2}_{\tilw_1}$ be the same correspondence as $\calHn_{\tilw_1}$ between $\Mpar$ and itself, but the structure morphisms of the left and right $\Mpar$ to $\tcA$ are of the form $R_{w_1w_2}\tilf:\Mpar\to\tcA$ and $R_{w_2}\tilf:\Mpar\to\tcA$. Then $\calH^{\natural,w_2}_{\tilw_1}$ and $\calHn_{\tilw_2}$ are composable. By Lem. \ref{l:HeckeG2}, the correspondences $\calH^{\natural,w_2}_{\tilw_1},\calHn_{\tilw_2}$ and their composition are all graph-like with respect to $\tcArs\subset\tcA$. By Prop. \ref{p:comp}, we have
\begin{equation}\label{eq:applycomp}
[\calH^{\natural,w_2}_{\tilw_1}]_\#\circ[\calHn_{\tilw_2}]_\#=[\calH^{\natural,w_2}_{\tilw_1}*\calHn_{\tilw_2}]_\#.
\end{equation}
It is easy to verify that $\calHn_{\tilw_1\tilw_2}$ and $\calH^{\natural,w_2}_{\tilw_1}*\calHn_{\tilw_2}$ are isomorphic over $\tcArs$, because they are the graph of the same automorphism of $\Mparrs$. Since they both satisfy the condition (G-2) with respect to $\tcArs\subset\tcA$, Lem. \ref{l:openpart} implies
\begin{equation}\label{eq:compandH}
[\calH^{\natural,w_2}_{\tilw_1}*\calHn_{\tilw_2}]_\#=[\calHn_{\tilw_1\tilw_2}]_\#.
\end{equation}
Combining (\ref{eq:compandH}) and (\ref{eq:applycomp}), we get
\begin{equation}\label{eq:almostimply}
[\calH^{\natural,w_2}_{\tilw_1}]_\#\circ[\calHn_{\tilw_2}]_\#=[\calHn_{\tilw_1\tilw_2}]_\#.
\end{equation}
Notice that
\begin{equation*}
[\calH^{\natural,w_2}_{\tilw_1}]_\#=R_{w_2,*}([\calHn_{\tilw_1}]_\#):R_{w_2,*}\tfQl\to R_{w_2,*}R_{w_1,*}\tfQl,
\end{equation*}
therefore (\ref{eq:almostimply}) implies the commutativity of the diagram (\ref{eq:enhancecomm}). This completes the proof.
\end{proof}


\subsection{The affine Weyl group action and Verdier duality}
In this subsection, we check that the $\tilW$-action on $\fQl$ essentially commutes with Verdier duality. 

Let $d=\dim\Mpar$. We fix a fundamental class of $\Mpar$, hence fixing an isomorphism \begin{equation}\label{eq:funclM}
u=[\Mpar]:\const{\Mpar}[d](d/2)\cong\DD_{\Mpar}[d](d/2).
\end{equation}
This induces an isomorphism
\begin{equation*}
v:\fQl[d](d/2)\isom\fpar_*\DD_{\Mpar}[d](d/2)=\DD(\fQl[d](d/2)).
\end{equation*}

\begin{prop}\label{p:VDWaff}
For any element $\tilw\in\tilW$, there is a commutative diagram in $D^b_c(\calA\times X)$:
\begin{equation}\label{d:VDcomm}
\xymatrix{\fQl[d](d/2)\ar[rrr]^{\tilw}\ar[d]^{v} &&& \fQl[d](d/2)\ar[d]^{v}\\
\DD(\fQl[d](d/2))\ar[rrr]^{\DD(\tilw^{-1})} &&& \DD(\fQl[d](d/2))}
\end{equation}
where the horizontal maps come from the $\tilW$-action constructed in Th. \ref{th:action}.

Similar result holds for the $\xcoch(T)$-action on $\tfQl[d](d/2)$.
\end{prop}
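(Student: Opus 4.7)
The plan is to reduce the statement to the general compatibility between Verdier duality and the pushforward of cohomological correspondences developed in the Appendix, after identifying the \emph{transpose} of the correspondence $\calH_{\tilw}$ with $\calH_{\tilw^{-1}}$.

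First, I would show that the transposed correspondence $\calH_{\tilw}^t$, obtained from $\calH_{\tilw}$ by interchanging the roles of $\overleftarrow{h}$ and $\overrightarrow{h}$, coincides with $\calH_{\tilw^{-1}}$ as a reduced closed substack of the swapped Hecke stack. Indeed, over $\Mparrs$ the correspondence $\calH_{\tilw}^{\rs}$ is, by Cor.~\ref{c:geomaction}, the graph of the right $\tilw$-action, and swapping legs of the graph of an automorphism $\phi$ produces the graph of $\phi^{-1}$. Since both $\calH_{\tilw}$ and $\calH_{\tilw^{-1}}$ are defined (Def.~\ref{def:redHecke}) as the closures of their restrictions to the regular semisimple locus, the identification extends uniquely from the open locus to the closures.

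Second, I would invoke the general Verdier-duality formalism for graph-like cohomological correspondences from the Appendix. Applied to a self-correspondence $C$ of the smooth Deligne--Mumford stack $\Mpar$ over $\calA\times X$, with the fundamental class $u=[\Mpar]$ supplying the Poincar\'e duality $\const{\Mpar}[d](d/2)\cong\DD_{\Mpar}[-d](-d/2)$, this formalism yields an identity of shape
\[
v \circ [C]_{\#} \;=\; \DD\bigl([C^t]_{\#}\bigr) \circ v
\]
on $\fQl[d](d/2)$, with shifts and Tate twists matching those built into $u$ and $v$. Taking $C=\calH_{\tilw}$ and combining with the identification $\calH_{\tilw}^t=\calH_{\tilw^{-1}}$ from Step~1 gives $v\circ[\calH_{\tilw}]_{\#} = \DD([\calH_{\tilw^{-1}}]_{\#})\circ v$, which is exactly the commutativity of the diagram (\ref{d:VDcomm}).

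For the variant concerning the $\xcoch(T)$-action on $\tfQl[d](d/2)$, the same argument applies over the base $\tcA$ using the enhanced correspondences $\calHn_{\tilw}$ of Prop.~\ref{p:enhancedaction}. For $\tilw=\lambda\in\xcoch(T)$ the image in $W$ is trivial, so $\calHn_{\lambda}$ is an honest self-correspondence of $\Mpar$ over $\tcA$, and the transpose is again $\calHn_{-\lambda}$ by the same graph-closure argument, so the previous step carries over verbatim.

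The main obstacle will be to justify applying the Verdier-duality formalism in sufficient generality: the reduced Hecke correspondences $\calH_{\tilw}$ are singular and only ind-proper, so one must verify, in the spirit of Lem.~\ref{l:HeckeG2} and the graph-like condition of Def.~\ref{def:grlike}, that the Appendix's machinery applies uniformly in $\tilw$, that it is compatible with the direct-limit structure on $\Corr(\Heckep;\Ql,\Ql)$, and that the shifts and Tate twists coming from the fundamental class $u$ match precisely the ones decorating the diagram (\ref{d:VDcomm}).
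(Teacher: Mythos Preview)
Your proposal is correct and follows essentially the same route as the paper: identify the transpose $\calH_{\tilw}^\vee$ with $\calH_{\tilw^{-1}}$ by matching graphs over $(\calA\times X)^{\rs}$ and taking closures, then invoke Lem.~\ref{l:VDcorr} together with the graph-like condition to conclude.

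One point deserves sharpening. The identity you write, $v\circ[C]_\#=\DD([C^t]_\#)\circ v$, is not a direct output of Lem.~\ref{l:VDcorr}; that lemma only gives $\DD(\zeta_\#)=(\DD\zeta)_\#$. To get your identity one must further check that the image of $[\calH_{\tilw}]$ under $\Corr(\calH_{\tilw};u,u):\Corr(\calH_{\tilw};L,L)\to\Corr(\calH_{\tilw};\DD L,\DD L)$ agrees with the Verdier dual $\DD[\calH_{\tilw^{-1}}]\in\Corr(\calH_{\tilw};\DD L,\DD L)$, at least after applying $(-)_\#$. The paper handles exactly this point: since both correspondences are graph-like, Lem.~\ref{l:openpart} reduces the comparison to $\Corr(\calH^{\rs}_{\tilw};\DD L,\DD L)$, where it is obvious because $\calH^{\rs}_{\tilw}$ is an honest graph. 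You allude to this in your ``main obstacle'' paragraph, so you have the right tool in hand; just make the step explicit rather than folding it into the statement of the Appendix's formalism.
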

\begin{proof}
Let $L:=\const{\Mpar}[d](d/2)$. By Lem. \ref{l:VDcorr}, the map $\DD(\tilw^{-1})$ in diagram (\ref{d:VDcomm}) is given by
\begin{equation*}
\DD([\calH_{\tilw^{-1}}]_\#)=\DD([\calH_{\tilw^{-1}}])_\#
\end{equation*}
where $\DD([\calH_{\tilw^{-1}}])$ is the Verdier dual of the cohomological correspondence $[\calH_{\tilw^{-1}}]\in\Corr(\calH_{\tilw^{-1}};L,L)$.
It is clear that $\calH_{\tilw^{-1}}$ and $\calH\dual_w$ coincide over $(\calA\times X)^{\rs}$ since they are both the graph of the right $\tilw^{-1}$-action on $\Mpar$. Therefore, taking closures in $\Heckep$, we get $\calH_{\tilw^{-1}}=\calH\dual_{\tilw}$ as self-correspondences of $\Mpar$ over $\calA\times X$. To prove the proposition, we only have to show that under the following two maps
\begin{equation*}
\xymatrix{[\calH_{\tilw}]\in\Corr(\calH_{\tilw};L,L)\ar[rr]^{\Corr(\calH_{\tilw};u,u)} && \Corr(\calH_{\tilw};\DD L,\DD L)\\
[\calH_{\tilw^{-1}}]\in\Corr(\calH_{\tilw^{-1}};L,L)=\Corr(\calH\dual_{\tilw};L,L)\ar[urr]^{\DD}}
\end{equation*}
the elements $[\calH_{\tilw}]$ and $[\calH_{\tilw^{-1}}]$ have the same image in $\Corr(\calH_{\tilw};\DD L,\DD L)$. Here the map $u:L\isom\DD L$ is defined in (\ref{eq:funclM}). Since the correspondences involved are all graph-like, by Lem. \ref{l:openpart}, it suffices to check the coincidence of the two images in $\Corr(\calH^{\rs}_{\tilw};\DD L,\DD L)$, which is obvious. 
\end{proof}


\appendix

\section{Generalities on cohomological correspondences}\label{s:corr}

In this appendix, we review the formalism of cohomological correspondences. The results in  Sec. \ref{ss:corr} through Sec. \ref{ss:VDcorr} should be standard and we partially follow the presentation of \cite{SGA5}. We introduce a nice class of correspondences called {\em graph-like correspondences} in Sec. \ref{ss:grlike}, which will be used in the construction of the affine Weyl group action on the parabolic Hitchin complex.

We sometimes put a label over an arrow to describe the nature of the map. The label ``\bch'' means proper base change; ``\adj'' means adjunction; ``$\stsh$'' means the natural transformation $\phi^*f^!\to f^!_1\psi^*$ (adjoint to the proper base change) associated to the following Cartesian diagram
\begin{equation*}
\xymatrix{X_1\ar[r]^{\phi}\ar[d]^{f_1} & X\ar[d]^{f}\\Y_1\ar[r]^{\psi} & Y}
\end{equation*}

\subsection{Cohomological correspondences}\label{ss:corr}
We recall the general formalism of cohomological correspondences, following \cite{SGA5}. Consider the following correspondence diagram
\begin{equation}\label{d:corr}
\xymatrix{ & C\ar[dl]_{\overleftarrow{c}}\ar[dr]^{\overrightarrow{c}} & \\
 X\ar[dr]_{f} & & Y\ar[dl]^{g}\\
& S & }
\end{equation}
where all the spaces are Deligne-Mumford stacks and all maps are of finite type. {\em We always assume that $\overleftrightarrow{c}=(\overleftarrow{c},\overrightarrow{c}):C\to X\times_S Y$ is proper}. Let $\overleftarrow{p},\overrightarrow{p}$ be the projections from $X\times_SY$ to $X$ and $Y$ respectively.

\begin{defn}\label{def:cohocorr} A {\em cohomological correspondence between a complex $\calF\in D^b(X)$ and a complex $\calG\in D^b(Y)$ with support on $C$} is an element in
\begin{equation*}
\Corr(C;\calF,\calG):=\Hom_{C}(\overrightarrow{c}^*\calG,\overleftarrow{c}^!\calF).
\end{equation*} 
\end{defn}

A such cohomological correspondence $\zeta$ induces a morphism 
\begin{equation}\label{eq:defeff}
\zeta_\#:g_!\calG\to f_*\calF
\end{equation}
which is defined by the following procedure:
\begin{equation}\label{eq:defcorr}
(-)_\#:\Corr(C;\calF,\calG)\xrightarrow{\alpha}\Corr(X\times_SY;\calF,\calG)\xrightarrow{\beta}\Hom_{S}(g_!\calG,f^*\calF). 
\end{equation}
where $\alpha$ is the composition
\begin{eqnarray*}
\Corr(C;\calF,\calG)&\xrightarrow{\overleftrightarrow{c}_*}&\Hom_{X\times_SY}(\overleftrightarrow{c}_*\overrightarrow{c}^*\calG,\overleftrightarrow{c}_*\overleftarrow{c}^!\calF)\\
&=&\Hom_{X\times_SY}(\overleftrightarrow{c}_*\overleftrightarrow{c}^*\overrightarrow{p}^*\calG,\overleftrightarrow{c}_!\overleftrightarrow{c}^!\overleftarrow{p}^!\calF)\textup{ (}\overleftrightarrow{c}\textup{ is proper)}\\
&\xrightarrow{\adj}&\Corr(X\times_SY;\calF,\calG)
\end{eqnarray*}
and $\beta$ is the composition
\begin{equation}\label{eq:defnsharp}
\Corr(X\times_SY;\calF,\calG)\stackrel{\adj}{=}\Hom_S(\calG,\overrightarrow{p}_*\overleftarrow{p}^!\calF)\stackrel{\bch}{\cong}\Hom_{S}(\calG,g^!f_*\calF)\stackrel{\adj}{=}\Hom_S(g_!\calG,f_*\calF).
\end{equation}

The morphism $\alpha$ is a special case of the {\em push-forward} of cohomological correspondences: Suppose $\gamma:C\to C'$ is a proper map of correspondences between $X$ and $Y$ over $S$, then we can define
\begin{equation*}
\gamma_*:\Corr(C;\calF,\calG)\to\Corr(C';\calF,\calG). 
\end{equation*}
in the same way as we defined $\alpha$. It follows directly from the definition that

\begin{lemma}\label{l:pushclass}
For any $\zeta\in\Corr(C;\calF,\calG)$, we have
\begin{equation*}
(\gamma_*\zeta)_\#=\zeta_\#\in\Hom_S(g_!\calG,f_*\calF).
\end{equation*}
\end{lemma}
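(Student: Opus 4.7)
The plan is to unwind the two constructions and observe that they factor through the common intermediate group $\Corr(X\times_SY;\calF,\calG)$ in a compatible way. Recall from (\ref{eq:defcorr}) that $(-)_\#$ is the composition $\beta\circ\alpha$, where $\alpha$ depends on the correspondence (it is the push-forward along $\overleftrightarrow{c}:C\to X\times_SY$) and $\beta$ depends only on $X,Y,S,\calF,\calG$. Since $\beta$ is the same in both calculations, it suffices to prove the equality
\begin{equation*}
\alpha_C(\zeta)=\alpha_{C'}(\gamma_*\zeta)\quad\text{in}\quad\Corr(X\times_SY;\calF,\calG).
\end{equation*}

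The key observation is that the construction of $\alpha$ given in the excerpt is itself an instance of the push-forward of cohomological correspondences: $\alpha_C=(\overleftrightarrow{c})_*$, regarded as the push-forward along the proper map $\overleftrightarrow{c}:C\to X\times_SY$ of correspondences between $X$ and $Y$ (with $X\times_SY$ carrying its tautological correspondence structure via $\overleftarrow{p},\overrightarrow{p}$). Likewise $\alpha_{C'}=(\overleftrightarrow{c'})_*$. Since $\overleftrightarrow{c}=\overleftrightarrow{c'}\circ\gamma$ as maps of correspondences over $S$, and both $\gamma$ and $\overleftrightarrow{c'}$ are proper, the desired equality reduces to the functoriality statement
\begin{equation*}
(\overleftrightarrow{c'}\circ\gamma)_*=(\overleftrightarrow{c'})_*\circ\gamma_*
\end{equation*}
on $\Corr(C;\calF,\calG)\to\Corr(X\times_SY;\calF,\calG)$.

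First I would check this functoriality abstractly: given proper maps $C\xrightarrow{\gamma}C'\xrightarrow{\delta}C''$ of correspondences over $S$, the construction of the push-forward uses only (i) the identifications $\overrightarrow{c}^*\calG=\gamma^*\overrightarrow{c'}^*\calG$ and $\overleftarrow{c}^!\calF=\gamma^!\overleftarrow{c'}^!\calF$, (ii) the isomorphism $\gamma_*=\gamma_!$, and (iii) the unit and counit of the $(\gamma^*,\gamma_*)$- and $(\gamma_!,\gamma^!)$-adjunctions. The compatibility of units/counits with composition of functors, together with $(\delta\circ\gamma)_!=\delta_!\gamma_!$ and $(\delta\circ\gamma)^!=\gamma^!\delta^!$, gives $(\delta\circ\gamma)_*=\delta_*\circ\gamma_*$ on cohomological correspondences by a routine diagram chase. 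Applying this with $\delta=\overleftrightarrow{c'}$ (viewed as a map of correspondences to the trivial correspondence $X\times_SY$) yields $\alpha_C=\alpha_{C'}\circ\gamma_*$, whence $(\gamma_*\zeta)_\#=\beta(\alpha_{C'}(\gamma_*\zeta))=\beta(\alpha_C(\zeta))=\zeta_\#$.

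The only potentially delicate step is the verification of functoriality of push-forward under composition of proper maps. This is a standard compatibility of adjunction units/counits with composition, but one has to be careful to keep track of the base-change identification implicit in $(\delta\gamma)_!(\delta\gamma)^!=\delta_!\gamma_!\gamma^!\delta^!$; everything commutes because the relevant 2-morphisms are the standard ones attached to the proper base change and the $(-)_!\dashv(-)^!$ adjunction.
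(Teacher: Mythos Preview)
Your argument is correct and is exactly what the paper has in mind: the text states that $\alpha$ is itself a special case of the push-forward $\gamma_*$ and then asserts the lemma ``follows directly from the definition,'' giving no further proof. You have simply written out that direct verification, namely the functoriality $(\overleftrightarrow{c'}\circ\gamma)_*=(\overleftrightarrow{c'})_*\circ\gamma_*$ of push-forward under composition of proper maps of correspondences, which is indeed a routine compatibility of adjunction counits.
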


We will mainly be interested in the special case where $\calF$ and $\calG$ are the constant sheaves (in degree 0) on $X$ and $Y$. In this case, we have
\begin{equation*}
\Corr(C;\const{X},\const{Y})=\Hom_{C}(\overrightarrow{c}^*\const{Y},\overleftarrow{c}^!\const{X})=H^0(C,\DD_{\overleftarrow{c}}).
\end{equation*}
Here $\DD_{\overleftarrow{c}}$ means the dualizing complex relative to the morphism $\overleftarrow{c}$.


\subsection{Pull-back of correspondences}\label{ss:pullback} 
Let $C$ be a correspondence between $X$ and $Y$ over $S$. Let $\beta:S'\to S$ be a morphism. We base change the whole situation of diagram (\ref{d:corr}) to $S'$ and get a correspondence
\begin{equation*}
\xymatrix{ & C'\ar[dl]_{\overleftarrow{c'}}\ar[dr]^{\overrightarrow{c'}} & \\
X'\ar[dr]_{f'} & & Y'\ar[dl]^{g'}\\
& S' & }
\end{equation*}

Let $\gamma:C'\to C, \phi:X'\to X, \psi:Y'\to Y, \theta=(\phi,\psi):X'\times_{S'}Y'\to X\times_SY$ be the morphisms base-changed from $\beta$. We define the {\em pull-back} map of cohomological correspondences:
\begin{equation*}
\gamma^*: \Corr(C;\calF,\calG)\to\Corr(C';\phi^*\calF,\psi^*\calG)
\end{equation*}
as follows: for $\zeta\in\Corr(C;\calF,\calG)$, $\gamma^*\zeta$ is defined as the composition
\begin{equation}\label{eq:pullcorr}
\overrightarrow{c'}^*\psi^*\calG=\gamma^*\overrightarrow{c}^*\calG\xrightarrow{\gamma^*\zeta}\gamma^*\overleftarrow{c}^!\calF\xrightarrow{\stsh}\overleftarrow{c'}^!\phi^*\calF.
\end{equation}

\begin{lemma}\label{l:pullbase}
Suppose $f:X\to S$ is proper, then for any $\zeta\in\Corr(C;\calF,\calG)$ we have a commutative diagram
\begin{equation}\label{d:corrpull}
\xymatrix{\Corr(C;\calF,\calG)\ar[r]^{(-)_\#}\ar[d]^{\gamma^*} & \Hom_S(g_!\calG,f_*\calF)\ar[d]^{\beta^*}\\
\Corr(C';\phi^*\calF,\psi^*\calG)\ar[r]^{(-)_\#} & \Hom_{S'}(\beta^*g_!\calG,\beta^*f_*\calF)\ar@{=}[r]^{\bch}& \Hom_{S'}(g'_!\psi^*\calG,f'_*\phi^*\calF)}.
\end{equation}
\end{lemma}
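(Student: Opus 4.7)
My plan is to prove Lemma \ref{l:pullbase} by unwinding the factorization $(-)_\# = \beta \circ \alpha$ given in (\ref{eq:defcorr}) and checking that each of $\alpha$ and $\beta$ commutes with pull-back separately. The whole argument is a diagram chase built from the naturality of the $\stsh$-transformation and of the adjunction units/counits, together with proper base change for $\overleftrightarrow{c}$ (which is proper by hypothesis) and for $f$ (which is proper by hypothesis of the lemma).

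First I would treat $\alpha$: the push-forward $\overleftrightarrow{c}_*: \Corr(C;\calF,\calG) \to \Corr(X\times_S Y;\calF,\calG)$ commutes with $\theta^* / \gamma^*$. This reduces to the compatibility of the $\stsh$ map with the Cartesian square cut out by $\overleftrightarrow{c}$ and $\theta$, combined with proper base change $\theta^*\overleftrightarrow{c}_* \isom \overleftrightarrow{c'}_*\gamma^*$ (available because $\overleftrightarrow{c}$ is proper, so $\overleftrightarrow{c}_* = \overleftrightarrow{c}_!$). Then I would treat $\beta$: it is built from the base-change transformation $\overrightarrow{p}_*\overleftarrow{p}^! \to g^! f_*$ followed by the $(g_!, g^!)$ adjunction. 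Applying $\beta^*$ on the target I use (a) the standard base change $\beta^* g_! \isom g'_! \psi^*$ (no properness needed on $g$), and (b) proper base change $\beta^* f_* \isom f'_* \phi^*$ (which is precisely where properness of $f$ enters). Naturality of the $\stsh$ maps attached to the Cartesian squares relating $\overleftarrow{p},\overrightarrow{p}$ to $\overleftarrow{p'},\overrightarrow{p'}$ then shows that the base-change transformation $\overrightarrow{p}_*\overleftarrow{p}^! \to g^! f_*$, pulled back to $S'$, coincides with the analogous transformation over $S'$.

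Putting the two steps together yields the commutativity of the outer rectangle of (\ref{d:corrpull}). The main obstacle, as I see it, is purely organizational: the combined diagram has many adjunction unit/counit arrows and several interleaved Cartesian squares, so one has to carefully invoke the naturality of $\stsh$ with respect to both horizontal and vertical juxtaposition of Cartesian squares (as in \cite{SGA5}). Once the relevant large diagram is laid out, every small face commutes either by naturality of an adjunction square, by functoriality of pull-back and push-forward, or by a standard compatibility of base-change maps. A completely analogous (in fact slightly simpler) argument will cover the variant needed for Lemma \ref{l:VDcorr} later.
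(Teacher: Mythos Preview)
Your proposal is correct and takes essentially the same approach as the paper: both proofs are diagram chases built from naturality of the $\stsh$-transformation, the adjunction units/counits, and the relevant proper base change isomorphisms. The only difference is organizational---you split the verification along the factorization $(-)_\#=\beta\circ\alpha$, whereas the paper assembles a single commutative diagram of functors (with rows $\psi^*\overrightarrow{c}_*\overleftarrow{c}^!\to\psi^*\overrightarrow{p}_*\overleftarrow{p}^!\to\psi^*g^!f_*$, etc.) and then applies the $(g'_!,g'^!)$ adjunction at the end; the content is the same.
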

\begin{proof}
The proof is a diagram chasing. We have a commutative diagram of functors
\begin{equation}\label{d:basecd}
\xymatrix{\psi^*\overrightarrow{c}_*\overleftarrow{c}^!\ar[r]\ar@{=}[d]^{\bch} & \psi^*\overrightarrow{p}_*\overleftarrow{p}^!\ar@{=}[r]^{\bch}\ar@{=}[d]^{\bch} & \psi^*g^!f_*\ar[d]^{\stsh}\\
\overrightarrow{c'}_*\gamma^*\overleftarrow{c}^!\ar[r]\ar[d]^{\stsh} &\overrightarrow{p'}_*\theta^*\overleftarrow{p}^!\ar[d]^{\stsh} & g'^!\beta^*f_*\ar@{=}[d]^{\bch}\\
\overrightarrow{c'}_*\overleftarrow{c'}^!\phi^*\ar[r] & \overrightarrow{p'}_*\overleftarrow{p'}^!\phi^*\ar@{=}[r]^{\bch} & g'^!f'_*\phi^*}
\end{equation}
here the unlabeled maps are defined in a similar way as the map $\alpha$ in (\ref{eq:defcorr}).

Therefore the following diagram is also commutative
\begin{equation*}
\xymatrix{\psi^*\calG\ar[r]^{\psi^*g^!\zeta_\#}\ar[dr]_{g'^!(\gamma^*\zeta)_\#} & \psi^*g^!f_*\calF\ar[d]\ar[r]^{\stsh} & g'^!\beta^*f_*\calF\\
& g'^!f'_*\phi^*\calF\ar@{=}[ur]_{\bch} &}
\end{equation*}
where the commutativity of the left triangle follows from diagram (\ref{d:basecd}) applied to $\calF$. Applying the adjunction of $(g'_!,g'^!)$ to the above diagram gives the commutative diagram
\begin{equation*}
\xymatrix{\beta^*g_!\calG\ar[r]^{\beta^*\zeta_\#}\ar[d]^{\bch}_{\cong} & \beta^*f_*\calF\ar[d]^{\bch}_{\cong}\\
g'_!\psi^*\calG\ar[r]^{(\gamma^*\zeta)_\#} & f'_*\phi^*\calF}
\end{equation*} 
which is equivalent to the diagram (\ref{d:corrpull}).
\end{proof}


\subsection{Composition of correspondences}\label{ss:compcorr}
Let $C_1$ be a correspondence between $X$ and $Y$ over $S$, and $C_2$ be a correspondence between $Y$ and $Z$ over $S$. {\em Assume that $Y$ is proper over $S$}. The {\em composition} $C=C_1*C_2$ of $C_1$ and $C_2$ is defined to be $C_1\times_Y C_2$, viewed as a correspondence between $X$ and $Z$ over $S$:
\begin{equation}\label{d:comp}
\xymatrix{& &
C\ar[dl]^{\overleftarrow{d}}\ar@/_2pc/[ddll]_{\overleftarrow{c}}\ar[dr]_{\overrightarrow{d}}\ar@/^2pc/[drdr]^{\overrightarrow{c}} & & \\
& C_1\ar[dl]^{\overleftarrow{c_1}}\ar[dr]^{\overrightarrow{c_1}} & & C_2\ar[dl]_{\overleftarrow{c_2}}\ar[dr]_{\overrightarrow{c_2}} &\\
X\ar[drr]_{f} & & Y\ar[d]^{g} & & Z\ar[dll]^{h}\\
& & S & &}
\end{equation}
Note that the properness of $Y$ over $S$ ensures the properness of $C$ over $X\times_S Z$.

Let $\calF,\calG,\calH$ be complexes on $X,Y$ and $Z$ respectively. We define the {\em convolution product}
\begin{equation}\label{eq:convcorr}
\circ_Y:\Corr(C_1;\calF,\calG)\otimes\Corr(C_2;\calG,\calH)\to\Corr(C;\calF,\calH)
\end{equation}
as follows. Let $\zeta_1\in\Corr(C_1;\calF,\calG)$ and $\zeta_2\in\Corr(C_2;\calG,\calH)$, then $\zeta_1\circ_Y\zeta_2$ is given by
\begin{equation*}
\overrightarrow{c}^*\calH=\overrightarrow{d}^*\overrightarrow{c_2}^*\calH\xrightarrow{\zeta_2}\overrightarrow{d}^*\overleftarrow{c_2}^!\calG\xrightarrow{\stsh}\overleftarrow{d}^!\overrightarrow{c_1}^*\calG\xrightarrow{\zeta_1}\overleftarrow{d}^!\overleftarrow{c_1}^!\calF=\overleftarrow{c}^!\calF.
\end{equation*}

\begin{lemma}\label{l:conv}
For $\zeta_1\in\Corr(C_1;\calF,\calG)$, $\zeta_2\in\Corr(C_2;\calG,\calH)$, we have
\begin{equation*}
\zeta_{1,\#}\circ\zeta_{2,\#}=(\zeta_1\circ_Y\zeta_2)_{\#}:h_!\calH\to f_*\calF.
\end{equation*}
\end{lemma}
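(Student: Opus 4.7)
The plan is a direct diagram chase, unwinding both $\zeta_{1,\#}\circ\zeta_{2,\#}$ and $(\zeta_1\circ_Y\zeta_2)_\#$ via the definitions in (\ref{eq:defcorr}), (\ref{eq:defnsharp}), and (\ref{eq:convcorr}). The main tool is the functoriality of proper base change combined with the compatibility of the natural transformation $\stsh$ with composition of Cartesian squares.

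First, I would rewrite $\zeta_{1,\#}$, using the adjunction $(g_!,g^!)$ in (\ref{eq:defnsharp}), as the adjoint of a morphism $\widetilde\zeta_1:\calG\to g^!f_*\calF$ obtained from $\zeta_1$ by proper base change over $X\times_S Y$. Similarly, $\zeta_{2,\#}$ becomes the adjoint of $\widetilde\zeta_2:\calH\to h^!g_*\calG$. Since $g$ is proper (as $Y\to S$ is assumed proper), $g_!=g_*$, and the composition $\zeta_{1,\#}\circ\zeta_{2,\#}:h_!\calH\to f_*\calF$ is adjoint to the explicit composition
$$\calH \xrightarrow{\widetilde\zeta_2} h^!g_*\calG \xrightarrow{h^!g_*\widetilde\zeta_1} h^!g_*g^!f_*\calF \xrightarrow{h^!(\textup{counit})} h^!f_*\calF.$$

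Second, I would unwind $(\zeta_1\circ_Y\zeta_2)_\#$ directly. The convolution is built, by (\ref{eq:convcorr}), from $\zeta_1$ and $\zeta_2$ together with one instance of the base-change transformation $\stsh$ for the Cartesian square with corners $C=C_1\times_Y C_2$, $C_1$, $C_2$, $Y$. Applying (\ref{eq:defnsharp}) to this composite correspondence on $C$, viewed over $X\times_S Z$, produces a morphism $h_!\calH\to f_*\calF$. I would then show, by a diagram chase passing through the triple fiber product $X\times_S Y\times_S Z$, that after adjunction this morphism coincides with the composition displayed above. The key identity is the compatibility of the two ways of factoring the base-change isomorphism for $X\times_S Z\to S$ via the proper projection from $X\times_S Y\times_S Z$, which supplies the counit $g_*g^!\to\id$ appearing in the previous step.

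The main obstacle is notational bookkeeping rather than conceptual depth: the proof is entirely formal, using only adjunction and proper base change, but the resulting commutative diagram involves a fair number of natural transformations. Lemma \ref{l:pullbase} already carries out the analogous chase for pull-back of correspondences, and the same style of reasoning applies here; the extra ingredient is handling the intermediate space $Y$, which is where the properness hypothesis on $Y\to S$ is essential (it lets $g_!=g_*$ and makes the counit $g_*g^!\to\id$ available for the identification).
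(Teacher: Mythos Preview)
Your proposal is correct and follows essentially the same route as the paper: both argue by passing to the $(h_!,h^!)$-adjoint form and comparing the two sides via a single commutative diagram built from adjunction, proper base change, and the identity $g_!=g_*$. The paper's diagram works directly at the level of the correspondence spaces $C_1,C_2,C$ rather than pushing forward to $X\times_S Y\times_S Z$ first, but this is a cosmetic difference; your displayed composition $\calH\to h^!g_*\calG\to h^!g_*g^!f_*\calF\to h^!f_*\calF$ is exactly the bottom row of the paper's diagram.
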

\begin{proof}
The proof is again a diagram chasing:
\begin{equation*}
\xymatrix{& \overrightarrow{c_2}_*\overleftarrow{c_2}^!\calG\ar@/^1pc/[rr]^{\zeta_1}\ar[r]_(.4){\adj}\ar[d] & \overrightarrow{c_2}_*\overleftarrow{c_2}^!\overrightarrow{c_1}_*\overrightarrow{c_1}^*\calG\ar[r]\ar[d] & \overrightarrow{c_2}_*\overleftarrow{c_2}^!\overrightarrow{c_1}_*\overleftarrow{c_1}^!\calF\ar@{=}[r]^{\bch}\ar[d] & \overrightarrow{c_2}_*\overrightarrow{d}_*\overleftarrow{d}^!\overleftarrow{c_1}^!\calF\ar[d]\\
\calH\ar[ur]^{\zeta_2}\ar[r]_{\zeta_{2,\#}} & h^!g_*\calG\ar[r]^(.4){\adj}\ar@/_1pc/[rr]_{\zeta_{1,\#}} & h^!g_*\overrightarrow{c_1}_*\overrightarrow{c_1}^*\calG\ar[r] & h^!g_*g^!f_*\calF\ar[r]^{g_!=g_*,\adj} & h^!f_*\calF}
\qedhere
\end{equation*}
\end{proof}

Consider the correspondences $C_i$ between $X_i$ and $X_{i+1}$, $i=1,2,3$. Assume $X_2,X_3$ are proper over $S$. It follows from the definition of convolution that:
\begin{lemma}\label{l:ass}
The convolution product is associative. More precisely, for $\calF_i\in D^b(X_i),i=1,\cdots,4$ and $\zeta_i\in\Corr(C_i;\calF_i,\calF_{i+1})$, $i=1,2,3$, we have
\begin{equation*}
(\zeta_1\circ_{X_2}\zeta_2)\circ_{X_3}\zeta_3=\zeta_1\circ_{X_2}(\zeta_2\circ_{X_3}\zeta_3).
\end{equation*}
\end{lemma}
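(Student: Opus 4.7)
The plan is to unwind both sides of the claimed identity into explicit compositions of morphisms on the common correspondence $C := C_1 \times_{X_2} C_2 \times_{X_3} C_3$ (which is proper over $X_1 \times_S X_4$ thanks to properness of $X_2, X_3$ over $S$), and then show that the two compositions agree by invoking the standard coherence of the natural transformation $\stsh$ under horizontal and vertical gluing of Cartesian squares.

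First I would set up notation. Let $\overleftarrow{c}\colon C\to X_1$ and $\overrightarrow{c}\colon C\to X_4$ be the outer projections, and $p_i\colon C\to C_i$ the projections onto each factor. Both $(\zeta_1\circ_{X_2}\zeta_2)\circ_{X_3}\zeta_3$ and $\zeta_1\circ_{X_2}(\zeta_2\circ_{X_3}\zeta_3)$ lie in $\Corr(C;\calF_1,\calF_4)=\Hom_C(\overrightarrow{c}{}^*\calF_4,\overleftarrow{c}{}^!\calF_1)$. Unfolding the definition of $\circ_Y$ in (\ref{eq:convcorr}), each side expands as a sequence of five arrows: an application of $\zeta_3$, then a $\stsh$-map, then $\zeta_2$, then another $\stsh$-map, then $\zeta_1$. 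The only potential difference between the two sides lies in \emph{which} Cartesian square each $\stsh$ is being read off from.

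Next I would identify, square by square, the diagrams responsible for the $\stsh$'s on each side. On the left-hand side, the first convolution $\zeta_1\circ_{X_2}\zeta_2$ uses the base-change for the Cartesian square expressing $C_1\times_{X_2}C_2\to C_1$ as a pullback of $C_2\to X_2$; outer convolution with $\zeta_3$ then uses the square expressing $C\to C_1\times_{X_2}C_2$ as a pullback of $C_3\to X_3$. On the right-hand side, the roles are reversed: one first uses the square for $C_2\times_{X_3}C_3\to C_2$, then the square for $C\to C_2\times_{X_3}C_3$. Pulling everything back to $C$, on both sides the first $\stsh$ (between $\zeta_3$ and $\zeta_2$) is obtained from the Cartesian rectangle that expresses $C\to C_2$ as a pullback of $C_3\to X_3$ along $q_3\colon C_2\to X_3$; the standard compatibility of $\stsh$ with vertical pasting of Cartesian squares (respectively horizontal pasting for the second $\stsh$) shows that the two decompositions yield the same natural transformation. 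Thus both composite morphisms are literally the same five-arrow chain in $\Hom_C(\overrightarrow{c}{}^*\calF_4,\overleftarrow{c}{}^!\calF_1)$.

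The only real work, and the main potential obstacle, is the coherence lemma for $\stsh$ on pasted Cartesian squares: given
\[
\xymatrix{A\ar[r]\ar[d] & B\ar[r]\ar[d] & C\ar[d] \\ D\ar[r] & E\ar[r] & F}
\]
with both inner squares (and hence the outer rectangle) Cartesian, the base-change natural transformation $\stsh$ for the outer rectangle equals the composition of the two inner ones; and analogously for vertical pasting. This is a standard feature of the six-functor formalism (see \cite{SGA5}), essentially dual to the well-known compatibility of proper base change with composition. Once that lemma is invoked, associativity is a bookkeeping exercise: the two groupings of convolution simply correspond to two different orders in which one pastes the two elementary Cartesian squares describing $C$ as an iterated fiber product, and the common value is the base-change along the total diagram. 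No new ideas beyond diagram chasing are needed.
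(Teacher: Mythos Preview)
Your proposal is correct and is precisely the argument the paper has in mind: the paper itself says only ``It follows from the definition of convolution,'' treating the lemma as immediate, and what you have written is an accurate unwinding of that definition together with the standard pasting coherence for the natural transformation $\stsh$. There is no genuine difference in approach; you have simply supplied the diagram chase that the paper omits.
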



\subsection{Verdier duality and correspondences}\label{ss:VDcorr}
In this subsection, we study the interaction between Verdier duality and cohomological correspondences. We continue to use the notation from Sec. \ref{ss:corr}. Interchanging $X$ and $Y$ in the diagram (\ref{d:corr}), the same stack $C$ can be viewed as a correspondence between $Y$ and $X$. We denote this correspondence by $C\dual$, and call it the {\em transposition} of $C$. The transposition $C\dual$ has two projections:
\begin{eqnarray*}
\overleftarrow{c\dual}=\overrightarrow{c}:C\to Y;\hspace{1cm}\overrightarrow{c\dual}=\overleftarrow{c}:C\to X.
\end{eqnarray*}

The Verdier duality functor gives an isomorphism
\begin{equation*}
\DD(-):\Corr(C;\calF,\calG)\isom\Corr(C\dual;\DD\calG,\DD\calF)
\end{equation*}
which sends the map $\zeta:\overrightarrow{c}^*\calG\to\overleftarrow{c}^!\calF$ to its Verdier dual
\begin{equation*}
\DD\zeta:\overrightarrow{c\dual}^*\DD\calF=\DD(\overleftarrow{c}^!\calF)\to\DD(\overrightarrow{c}^*\calG)=\overleftarrow{c\dual}^!\DD\calG.
\end{equation*}

On the other hand, the Verdier duality functor also gives an isomorphism
\begin{equation*}
\DD(-):\Hom_S(g_!\calG,f_*\calF)\isom\Hom_{S}(f_!\DD\calF,g_*\DD\calG).
\end{equation*}

\begin{lemma}\label{l:VDcorr}
For any $\zeta\in\Corr(C;\calF,\calG)$, we have
\begin{equation*}
\DD(\zeta_\#)=(\DD\zeta)_\#\in\Hom_{S}(f_!\DD\calF,g_*\DD\calG).
\end{equation*}
\end{lemma}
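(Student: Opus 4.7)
The plan is to unwind the definition (\ref{eq:defcorr}) of $\zeta_\#$ and verify that each of its elementary operations commutes with Verdier duality. The proof reduces to a functorial diagram chase built on three standard facts of the six-functor formalism: $\DD$ intertwines $(-)_*$ with $(-)_!$ and $(-)^*$ with $(-)^!$; proper base change has a $!$-form and a dual $*$-form which are exchanged by $\DD$; and the $(h_!,h^!)$-adjunction is sent by $\DD$ to the $(h^*,h_*)$-adjunction, for any morphism $h$.

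First I would reduce to the case $C = X \times_S Y$. By Lemma~\ref{l:pushclass}, $\zeta_\# = (\overleftrightarrow{c}_*\zeta)_\#$; the analogous identity $\DD(\overleftrightarrow{c}_*\zeta) = \overleftrightarrow{c}_*(\DD\zeta)$ in $\Corr((X\times_S Y)\dual; \DD\calG, \DD\calF)$ is a direct consequence of properness of $\overleftrightarrow{c}$, which ensures $\overleftrightarrow{c}_! = \overleftrightarrow{c}_*$ and sends the unit of $(\overleftrightarrow{c}^*,\overleftrightarrow{c}_*)$ used in the definition of $\alpha$ to the counit of $(\overleftrightarrow{c}_!,\overleftrightarrow{c}^!)$ under $\DD$. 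After this reduction it suffices to prove the identity for the product correspondence.

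For $C = X \times_S Y$ with projections $\overleftarrow{p},\overrightarrow{p}$, the map $\zeta_\#$ is the composition
\[
g_!\calG \stackrel{\adj}{\longleftrightarrow} \calG \xrightarrow{\zeta} \overrightarrow{p}_*\overleftarrow{p}^!\calF \stackrel{\bch}{\cong} g^!f_*\calF \stackrel{\adj}{\longleftrightarrow} f_*\calF,
\]
while $(\DD\zeta)_\#$, built from $C\dual$, is
\[
f_!\DD\calF \stackrel{\adj}{\longleftrightarrow} \DD\calF \xrightarrow{\DD\zeta} \overleftarrow{p}_*\overrightarrow{p}^!\DD\calG \stackrel{\bch}{\cong} f^!g_*\DD\calG \stackrel{\adj}{\longleftrightarrow} g_*\DD\calG.
\]
I would verify that applying $\DD$ to the first composition reproduces the second step by step: the middle arrow becomes $\DD\zeta$ by definition; the base change $\overrightarrow{p}_*\overleftarrow{p}^! \cong g^!f_*$ dualizes to its mate $\overrightarrow{p}_!\overleftarrow{p}^* \cong g^*f_!$, which is the same Cartesian square read with the opposite adjunctions and which is exactly the base change appearing on the second line (after dualizing the target and source); and the outer $(g_!,g^!)$- and $(\overrightarrow{p}^*,\overrightarrow{p}_*)$-adjunctions dualize to the $(g^*,g_*)$- and $(\overrightarrow{p}_!,\overrightarrow{p}^!)$-adjunctions that appear in $(\DD\zeta)_\#$.

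The main obstacle is purely notational: tracking the systematic swap $(-)_*\leftrightarrow(-)_!$ and $(-)^*\leftrightarrow(-)^!$ induced by $\DD$ through each base-change square and each adjunction, while keeping the transposition $C\mapsto C\dual$ and its effect on the roles of $\overleftarrow{p}$ and $\overrightarrow{p}$ straight. None of the individual identifications is deep---they are built into the six-functor formalism on Deligne--Mumford stacks---but the verification demands a careful, fully functorial diagram chase.
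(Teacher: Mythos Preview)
Your proposal is correct and follows essentially the same route as the paper: reduce to $C=X\times_SY$ via the compatibility of $\DD$ with the pushforward $\overleftrightarrow{c}_*$, then unwind the definition of $(-)_\#$ and check that each adjunction and base change step dualizes to the corresponding step in $(\DD\zeta)_\#$. The paper makes the final diagram chase more explicit---it writes out the dualized composition $\DD(\zeta_\#)$ and compares it to $(\DD\zeta)_\#$ through a three-square diagram connected by the natural transformation $f_!\overleftarrow{p}_*\to g_*\overrightarrow{p}_!$---but this is exactly the ``careful, fully functorial diagram chase'' you anticipate.
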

\begin{proof}
First, we claim that the following diagram is commutative
\begin{equation*}
\xymatrix{\Corr(C;\calF,\calG)\ar[r]^{\overleftrightarrow{c}_*}\ar[d]^{\DD} & \Corr(X\times_SY;\calF,\calG)\ar[d]^{\DD}\\
\Corr(C\dual;\DD\calG,\DD\calF)\ar[r]^{\overleftrightarrow{c}_*} & \Corr(Y\times_SX;\DD\calG,\DD\calF)}
\end{equation*}
In fact, this follows from the fact that $\DD$ transforms the adjunction $\id\to\overleftrightarrow{c}_*\overleftrightarrow{c}^*$ to the adjunction $\overleftrightarrow{c}_!\overleftrightarrow{c}^!\to\id$. Therefore we have reduced the situation to the case $C=X\times_SY$ (hence $C\dual=Y\times_SX$).

For $\zeta\in\Corr(X\times_SY;\calF,\calG)=\Hom_{X\times_SY}(\overrightarrow{p}^*\calG,\overleftarrow{p}^!\calF)$, the map $\zeta_\#$ is given by (see (\ref{eq:defnsharp})):
\begin{equation*}
g_!\calG\xrightarrow{\adj}g_!\overrightarrow{p}_*\overrightarrow{p}^*\calG\xrightarrow{g_!\overrightarrow{p}_*(\zeta)}g_!\overrightarrow{p}_*\overleftarrow{p}^!\calF\xrightarrow{\bch} g_!g^!f_*\calF\xrightarrow{\adj}f_*\calF.
\end{equation*}
Dualizing the above maps, we get
\begin{equation}\label{eq:midd}
f_!\DD\calF\xrightarrow{\adj}g_*g^*f_!\DD\calF\xrightarrow{\bch}g_*\overrightarrow{p}_!\overleftarrow{p}^*\DD\calF\xrightarrow{g_!\overrightarrow{p}_!(\DD\zeta)}g_*\overrightarrow{p}_!\overrightarrow{p}^!\DD\calG\xrightarrow{\adj}g_*\DD\calG.
\end{equation}

We have to show that (\ref{eq:midd}) coincides with $(\DD\zeta)_\#$. In view of the definition of the assignment $\zeta\mapsto\zeta_\#$ in (\ref{eq:defnsharp}), we have to check the commutativity of the following diagram 
\begin{equation*}
\xymatrix{f_!\DD\calF\ar[d]^{\adj}\ar[r]^{\adj}\ar@{}[dr]|{(a)} & f_!\overleftarrow{p}_*\overleftarrow{p}^*\DD\calF\ar[rr]^{f_!\overleftarrow{p}_*(\DD\zeta)}\ar[d]^{!*\to*!}\ar@{}[drr]|{(b)} && f_!\overleftarrow{p}_*\overrightarrow{p}^!\DD\calG\ar[d]^{!*\to*!}\ar[r]^{\bch}\ar@{}[dr]|{(c)} & f_!f^!g_*\DD\calG\ar[d]^{\adj}\\
g_*g^*f_!\DD\calF\ar[r]^{\bch} & g_*\overrightarrow{p}_!\overleftarrow{p}^*\DD\calF\ar[rr]^{g_*\overrightarrow{p}_!(\DD\zeta)} && g_*\overrightarrow{p}_!\overrightarrow{p}^!\DD\calG\ar[r]^{\adj} & g_*\DD\calG}
\end{equation*}
Here the first row is the map (\ref{eq:defnsharp}) and the second row is (\ref{eq:midd}). The two vertical arrows labeled by $!*\to*!$ are given by the natural transformation $f_!\overleftarrow{p}_*\to g_*\overrightarrow{p}_!$, hence the square (b) is commutative. It remains to show that the squares (a) and (c) are commutative. Applying the adjunction $(g^*,g_*)$ to the square (a), it becomes
\begin{equation*}
\xymatrix{g^*f_!\DD\calF\ar[rr]^{\adj}\ar[d]^{\bch} & & g ^*f_!\overleftarrow{p}_*\overleftarrow{p}^*\DD\calF\ar[d]^{\bch}\\
\overrightarrow{p}_!\overleftarrow{p}^*\DD\calF\ar[rr]^{\overrightarrow{p}_!\overleftarrow{p}^*(\adj)}\ar@/_1.5pc/[rrrr]_{\id} & & \overrightarrow{p}_!\overleftarrow{p}^*\overleftarrow{p}_*\overleftarrow{p}^*\DD\calF\ar[rr]^{\overrightarrow{p}_!(\adj)} &&\overrightarrow{p}_!\overleftarrow{p}^*\DD\calF}
\end{equation*}
Now the upper square is commutative by the functoriality of the proper base change isomorphism; the lower row is commutative because the composition $\overleftarrow{p}^*\xrightarrow{\overleftarrow{p}^*(\adj)}\overleftarrow{p}^*\overleftarrow{p}_*\overleftarrow{p}^*\xrightarrow{\adj(\overleftarrow{p}^*)}\overleftarrow{p}^*$ is identity. The commutativity of the square (c) can be verified in a similar way. This completes the proof. 
\end{proof}


\subsection{Integration along a graph-like correspondence}\label{ss:grlike} 
In this subsection, we assume $X$ to be smooth and equidimensional of dimension $d$. We will introduce a class of correspondences which will be useful in construting global Springer actions.

\begin{defn}\label{def:grlike} Let $U\subset S$ be an open subscheme. A correspondence $C$ between $X$ and $Y$ over $S$ is said to be {\em left graph-like with respect to $U$} if it satisfies the following conditions:
\begin{enumerate}
\item[(G-1)] {\em The projection $\overleftarrow{c}:C_U\to X_U$ is \'{e}tale.}
\item[(G-2)] {\em $\dim C_U\leq d$ and the image of $C-C_U\to X\times_{S}Y$ has
dimension strictly less than $d$.}
\end{enumerate}
Similarly, $C$ is said to be {\em right graph-like with respect to $U$} if $\overrightarrow{c}:C_U\to X_U$ is \'{e}tale and (G-2) is satisfied; $C$ is said to be {\em graph-like with respect to $U$} if it is both left and right graph-like.
\end{defn}
Note that the inequality $\dim C_U\leq d$ is certainly implied by (G-1); we leave it here because sometimes we will refer to condition (G-2) alone without assuming (G-1).

\begin{lemma}\label{l:openpart}
Suppose $C$ is a correspondence between $X$ and $Y$ over $S$ satisfying (G-2) with respect to $U\subset S$. Let $\zeta,\zeta'\in
\Corr(C;\const{X},\const{Y})$. If $\zeta|_U=\zeta'|_U\in\Corr(C_U;\const{X_U},\const{Y_U})$, then $\zeta_\#=\zeta'_\#\in\Hom_S(g_!\const{Y},f_*\const{X})$.
\end{lemma}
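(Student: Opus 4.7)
The plan is to identify the relevant group of cohomological correspondences with a Borel--Moore homology group, and then exploit a dimension-excision argument controlled by condition (G-2).

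First I would observe that since $X$ is smooth and equidimensional of dimension $d$, we have $\overleftarrow{c}^!\const{X} \cong \DD_C[-2d](-d)$, hence a canonical identification
\begin{equation*}
\Corr(C;\const{X},\const{Y}) = H^0(C,\overleftarrow{c}^!\const{X}) \cong \hBM{2d}{C}(-d),
\end{equation*}
and similarly $\Corr(X\times_S Y;\const{X},\const{Y}) \cong \hBM{2d}{X\times_S Y}(-d)$. Under these identifications, the step $\overleftrightarrow{c}_*$ appearing in the factorization (\ref{eq:defcorr}) of $(-)_\#$ becomes the proper pushforward on Borel--Moore homology. In particular, $\eta_\#$ for $\eta := \zeta-\zeta'$ depends only on the class $\overleftrightarrow{c}_*\eta \in \hBM{2d}{X\times_S Y}(-d)$, so it suffices to show $\overleftrightarrow{c}_*\eta = 0$.

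Next, set $Z := C\setminus C_U$, a closed substack of $C$ (being the preimage of $S\setminus U$). The assumption $\zeta|_U = \zeta'|_U$ means that the restriction of $\eta$ to $\hBM{2d}{C_U}(-d)$ vanishes. The excision long exact sequence
\begin{equation*}
\hBM{2d}{Z}(-d) \to \hBM{2d}{C}(-d) \to \hBM{2d}{C_U}(-d)
\end{equation*}
then shows that $\eta$ lies in the image of $\hBM{2d}{Z}(-d)$. Let $\overline{Z}$ denote the scheme-theoretic image of $Z$ in $X\times_S Y$, a closed substack, and factor the restriction of $\overleftrightarrow{c}$ to $Z$ as $Z \to \overline{Z} \hookrightarrow X\times_S Y$. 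By condition (G-2) we have $\dim \overline{Z} < d$, so $\hBM{2d}{\overline{Z}}(-d) = 0$ for dimension reasons. Therefore the composed pushforward $\hBM{2d}{Z}(-d) \to \hBM{2d}{\overline{Z}}(-d) \to \hBM{2d}{X\times_S Y}(-d)$ kills the preimage of $\eta$, whence $\overleftrightarrow{c}_*\eta = 0$ and $\eta_\# = 0$.

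The main obstacle will be verifying carefully the two formal inputs: the identification of $\Corr(C;\const{X},\const{Y})$ with $\hBM{2d}{C}(-d)$ as a contravariant functor of \emph{open restriction} (so that restriction to $U$ really corresponds to the usual BM restriction appearing in excision), and the identification of the $\overleftrightarrow{c}_*$ step in (\ref{eq:defcorr}) with the proper BM pushforward, compatibly with the image factorization through $\overline{Z}$. Once these compatibilities are in place, the dimension vanishing $\hBM{2d}{\overline{Z}} = 0$ is automatic for any Deligne--Mumford stack of dimension strictly less than $d$, and no further calculation is needed.
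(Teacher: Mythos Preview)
Your argument is correct and uses the same essential mechanism as the paper: identify $\Corr$ with a Borel--Moore homology group via the smoothness of $X$, then use the dimension bound from (G-2) on the image in $X\times_S Y$. The organization differs only slightly: the paper first pushes forward to the full image $Z$ of $C$ in $X\times_S Y$ and argues that the restriction $\hBM{2d}{Z}\to\hBM{2d}{Z_U}$ is an isomorphism by matching top-dimensional irreducible components, whereas you use the excision sequence on $C$ and push only the closed complement through its (small-dimensional) image. Both rest on the same vanishing $\hBM{2d}{(\textup{image of }C-C_U)}=0$; your excision version is arguably a bit cleaner since it avoids reasoning about components, while the paper's version gives the slightly stronger statement that restriction to $U$ is already injective on $\Corr(Z;\Ql,\Ql)$. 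The two formal compatibilities you flag (open restriction on $\Corr$ matching BM restriction, and $\overleftrightarrow{c}_*$ matching proper BM pushforward) are exactly what is needed and hold for the reasons you indicate.
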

\begin{proof}
Let $Z$ be the image of $\overleftrightarrow{c}$ and $\overleftarrow{z},\overrightarrow{z}:Z\to X$ be the projections. Under the above assumptions, after choosing a fundamental class of $X$, we can identify $\const{X}$ with $\DD_X[-2d](-d)$, hence identify $\DD_{\overleftarrow{z}}$ with $\DD_Z[-2d](-d)$. Similar remark applies to $Z_U$. Consider the restriction map
\begin{equation*}
j^*:\cohog{-2d}{Z,\DD_Z}(-d)=\hBM{2d}{Z}(-d)\to\hBM{2d}{Z_U}(-d)=\cohog{-2d}{Z_U,\DD_{Z_U}}(-d).
\end{equation*}
Both sides have a basis consisting of fundamental classes of $d$-dimensional irreducible components of $Z$ or $Z_U$. By condition (G-2), the $d$-dimensional irreducible components of $Z$ and $Z_U$ are naturally in bijection. Therefore $j^*$ is an isomorphism, and the restriction map
\begin{equation*}
\Corr(Z;\const{X},\const{Y})=\cohog{0}{Z,\DD_{\overleftarrow{z}}}\to \cohog{0}{Z_U,\DD_{\overleftarrow{z}}}=\Corr(Z_U;\const{X_U},\const{Y_U})
\end{equation*}
is also an isomorphism. Since $\zeta|_U=\zeta'|_U\in\Corr(C_U;\const{X_U},\const{Y_U})$, hence
$(\overleftrightarrow{c}_*\zeta)|_U=(\overleftrightarrow{c}_*\zeta')|_U\in
\Corr(Z_U;\const{X_U},\const{Y_U})$, therefore
$\overleftrightarrow{c}_*\zeta=\overleftrightarrow{c}_*\zeta'\in\Corr(Z;\const{X},\const{Y})$. It remains to apply Lem.
\ref{l:pushclass} to $\overleftrightarrow{c}:C\to Z$.
\end{proof}

\begin{exam}\label{ex:graph}
Let $\phi:X\to Y$ be a morphism over $S$ and $\Gamma(\phi)$ be its graph in $X\times_SY$, then $\Gamma(\phi)$ is obviously a left graph-like correspondence between $X$ and $Y$. Let $[\Gamma(\phi)]\in \cohog{0}{\Gamma(\phi),\DD_{\Gamma(\phi)/X}}=\cohog{0}{\Gamma(\phi)}$ be the class of the constant function 1, or the fundamental class of $\Gamma(\phi)$ relative to $X$. Then the homomorphism $[\Gamma(\phi)]_\#:g_!\const{Y}\to f_*\const{X}$ is
\begin{equation*}
[\Gamma(\phi)]_\#:g_!\const{Y}\to g_*\const{Y}\xrightarrow{\phi^*}f_*\const{X}.
\end{equation*}
\end{exam}

\subsubsection{Integration along a correspondence} For a correspondence $C$ between $X$ and $Y$ over $S$ satisfying (G-2) with respect to some $U\subset S$, we have the fundamental class $[C_U]\in \hBM{2d}{C}(-d)$, defined as the sum of the fundamental classes of the closures of $d$-dimensional irreducible components of $C_U$. Using the fundamental class of $X$, we can identify $\const{X}$ with $\DD_X[-2d](-d)$, and get a quasi-isomorphism
\begin{equation*}
\DD_{\overleftarrow{c}}\cong\DD_C[-2d](-d).
\end{equation*}
Therefore $[C_U]$ can be viewed as a class in $\cohog{0}{C,\DD_{\overleftarrow{c}}}=\Corr(C;\const{X},\const{Y})$. We claim that the induced map $[C_U]_\#:g_!\const{Y}\to f_*\const{X}$ is independent of $U$. In fact, if $C$ also satisfies the condition (G-2) with respect to another $V\subset S$, then it again satisfies (G-2) with respect to $U\cap V$. Since $[C_U]$ and $[C_V]$ both restrict to $[C_{U\cap V}]$ in $\Corr(C_{U\cap V};\Ql,\Ql)$, Lem. \ref{l:openpart} implies that $[C_U]_\#=[C_V]_\#$. Therefore it is unambiguous to write
\begin{equation*}
[C]_\#:g_!\const{Y}\to f_*\const{X},
\end{equation*}
which is the sheaf-theoretic analogue of {\em integration along the correspondence $C$}. 

Now we study the composition of such integrations. We use the notation in the diagram (\ref{d:comp}). Let $X,Y$ be smooth, equidimensional and $Y$ be proper over $S$.

\begin{prop}\label{p:comp}
Assume $C_2$ is left graph-like and $C_1, C=C_1*C_2$ satisfy condition (G-2) with respect to some $U\subset S$, then
\begin{equation*}
[C_1]_\#\circ[C_2]_\#=[C]_\#:h_!\const{Z}\to f_*\const{X}.
\end{equation*}
Similarly, if we assume $C_1$ is right graph-like and $C_2, C=C_1*C_2$ satisfy condition (G-2) with respect to some $U\subset S$, the same conclude holds.
\end{prop}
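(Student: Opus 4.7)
The plan is to reduce the equality of maps to an equality of cohomological correspondences, and then to verify that equality over $U$ by a direct calculation exploiting the étaleness hypothesis.

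The first step is to apply Lemma \ref{l:conv} with $\zeta_1 = [C_1]$ and $\zeta_2 = [C_2]$, which yields $[C_1]_\# \circ [C_2]_\# = ([C_1] \circ_Y [C_2])_\#$. Hence it suffices to show that the two classes $[C_1] \circ_Y [C_2]$ and $[C]$ in $\Corr(C; \const{X}, \const{Z})$ induce the same map after applying $(-)_\#$.

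The second step is to invoke Lemma \ref{l:openpart}, which applies because $C$ satisfies condition (G-2) with respect to $U$. It is therefore enough to check that the two classes above restrict to the same element of $\Corr(C_U; \const{X_U}, \const{Z_U})$. Granting that convolution is compatible with the pullback of cohomological correspondences under base change along $U \hookrightarrow S$ (a direct verification from the definitions in Sec.~\ref{ss:pullback} and Sec.~\ref{ss:compcorr}), we are reduced to proving the identity $[C_{1,U}] \circ_Y [C_{2,U}] = [C_U]$ over $U$.

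The third and technical step is the local computation over $U$. Assume $C_2$ is left graph-like (the case where $C_1$ is right graph-like is symmetric, interchanging the roles of the two projections in the diagram (\ref{d:comp})). Over $U$, the projection $\overleftarrow{c_2}$ is étale, and so is its base change $\overleftarrow{d} \colon C_U \to C_{1,U}$. Two simplifications follow. First, the relative dualizing complex $\DD_{\overleftarrow{c_2}}$ is canonically identified with the constant sheaf $\const{C_{2,U}}$, so that the fundamental class $[C_{2,U}] \in \cohog{0}{C_{2,U}, \DD_{\overleftarrow{c_2}}}$ corresponds to the constant function $1$; equivalently, once one identifies $\overrightarrow{c_2}^*\const{Z_U} = \const{C_{2,U}} = \overleftarrow{c_2}^!\const{Y_U}$, the class $[C_{2,U}]$ is the identity map. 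Second, under the étale identifications $\overleftarrow{c_2}^! = \overleftarrow{c_2}^*$ and $\overleftarrow{d}^! = \overleftarrow{d}^*$, the natural transformation $\overrightarrow{d}^*\overleftarrow{c_2}^! \to \overleftarrow{d}^!\overrightarrow{c_1}^*$ used in the definition of the convolution becomes the tautological identification coming from the Cartesian square $C_U = C_{1,U} \times_{Y_U} C_{2,U}$. Unwinding the definition of $\circ_Y$ with these two observations, the convolution $[C_{1,U}] \circ_Y [C_{2,U}]$ reduces to the pullback $\overleftarrow{d}^*[C_{1,U}]$. Since $\overleftarrow{d}$ is étale and $C_{1,U}, C_U$ are pure of dimension $d$ after restriction, this pullback sends top-dimensional Borel--Moore cycles to top-dimensional Borel--Moore cycles and $\overleftarrow{d}^*[C_{1,U}] = [C_U]$.

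The main obstacle is the third step: one must carefully track the fundamental classes through the definition of convolution, verifying that the natural transformation $\stsh$ acts as expected on cycles under the étale identifications, and that étale pullback preserves fundamental classes. The compatibility of convolution with base change in the second step is routine but must not be omitted.
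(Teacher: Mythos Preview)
Your proposal is correct and follows essentially the same approach as the paper: reduce via Lemma~\ref{l:conv} to an equality of cohomological correspondences, then via Lemma~\ref{l:openpart} to the restriction over $U$, and finally use the \'etaleness of $\overleftarrow{c_2}$ (hence of $\overleftarrow{d}$) to identify the convolution with $[C_{2,U}]$ as the pullback $\overleftarrow{d}^*$, which sends $[C_{1,U}]$ to $[C_U]$. The paper's proof is terser but the logical structure and key steps are identical.
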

\begin{proof}
We prove the first statement. The proof follows from a sequence of d\'{e}vissages. By Lem. \ref{l:conv}, it suffices to prove
\begin{equation*}
[C_1]\circ_Y[C_2]=[C]\in\Corr(C;\const{X},\const{Z}).
\end{equation*}
By property (G-2) and Lem. \ref{l:openpart}, it suffices to prove
\begin{equation}\label{eq:twosides}
[C_{1,U}]\circ_{Y_U}[C_{2,U}]=[C_U]\in\cohog{0}{C_U,\DD_{\overleftarrow{c}}}.
\end{equation}
Therefore we have reduced to the case where $\overleftarrow{c_2}$, and hence $\overleftarrow{d}$ are \'{e}tale. In this case, we can identify $\DD_{\overleftarrow{c_2}}$ with $\const{C_2}$. Under this identification, the convolution product becomes
\begin{equation*}
\cohog{0}{C_1,\DD_{\overleftarrow{c_1}}}\otimes\cohog{0}{C_2}\to\cohog{0}{C,\DD_{\overleftarrow{c}}}.
\end{equation*}
and $[C_2]$ becomes the class of constant function 1 in $\cohog{0}{C_2}$. Therefore, convolution with $[C_2]$ becomes the pull-back along the \'{e}tale morphism $\overleftarrow{d}$:
\begin{equation*}
\overleftarrow{d}^*:\cohog{0}{C_1,\DD_{\overleftarrow{c_1}}}\to\cohog{0}{C,\DD_{\overleftarrow{c}}}.
\end{equation*}
It is obvious that $\overleftarrow{d}^*[C_1]=[C]$. Therefore (\ref{eq:twosides}) is proved.
\end{proof}

\begin{remark}
The proposition fails if we only assume $\overleftarrow{c_2}:C_2\to Y$ to be quasi-finite. For example, take $X=Y=Z=\PP^1$ and $S=\textup{pt}$. Let $C_2$ be the union of the diagonal and the graph of $z\mapsto z^{-1}$ in $\PP^1\times\PP^1$. Let $C_1$ be the graph of the constant map to $1\in\PP^1$. The the reduced structure of the composition $C$ is the same as $C_1$, the constant graph. However, the action of $[C]_\#$ on $\cohog{0}{X}$ is the identity while the action of $[C_1]_\#\circ[C_2]_\#$ on $\cohog{0}{X}$ is twice the identity.
\end{remark}


\subsection{The convolution algebra}\label{ss:convalg}
Assume $X$ is smooth of equidimension $d$ and $f:X\to S$ is proper. Let $C$ be a self correspondence of $X$ over $S$ satisfying (G-2). Assume we have a morphism $\mu:C*C\to C$ as correspondences which is associative, i.e., the following diagram is commutative:
\begin{equation*}
\xymatrix{& C*C*C\ar[dl]_{\mu*\id}\ar[dr]^{\id*\mu} &\\
C*C\ar[dr]^{\mu} & & C*C\ar[dl]_{\mu}\\
& C &}
\end{equation*}
Then the convolution gives a multiplication on $\Corr(C;\const{X},\const{X})$:
\begin{eqnarray*}
\circ:\Corr(C;\const{X},\const{X})\otimes\Corr(C;\const{X},\const{X})&\xrightarrow{\circ_X}&\Corr(C*C;\const{X},\const{X})\\
&\xrightarrow{\mu_*}&\Corr(C;\const{X},\const{X}).
\end{eqnarray*}
This multiplication is associative by Lem. \ref{l:ass} and the assumption that $\mu$ is associative. Therefore
$\Corr(C;\const{X},\const{X})$ acquires a (non-unital) algebra structure. Restricting to $C_U$, the vector space $\Corr(C_U;\const{X_U},\const{X_U})$ also acquires a (non-unital) algebra structure. 

\begin{remark}\label{rm:inv}
We have a map
\begin{equation*}
\End_S(X)\to\Corr(X\times_S X;\const{X},\const{X})
\end{equation*}
which sends a morphism $\phi:X\to X$ to the fundamental class of its graph. By Example \ref{ex:graph}, this is an {\em anti-}homomorphism of monoids. Here the monoid structures are given by the composition of morphisms on the LHS and the convolution $\circ$ on the RHS.
\end{remark}

\begin{prop}\label{p:alghom}
\begin{enumerate}
\item []
\item The map
\begin{equation}\label{eq:corract}
(-)_\#:\Corr(C;\const{X},\const{X})\to\End_S(f_*\const{X})
\end{equation}
is an algebra homomorphism.
\item The map (\ref{eq:corract}) factors through the restriction map
\begin{equation*}
j^*:\Corr(C;\const{X},\const{X})\to\Corr(C_U;\const{X_U},\const{X_U})
\end{equation*}
so that we also have an algebra homomorphism
\begin{equation}\label{eq:copenact}
(-)_{U,\#}:\cohog{0}{C_U,\DD_{\overleftarrow{c}}}\to\End_S(f_*\const{X}).
\end{equation}
\end{enumerate}
\end{prop}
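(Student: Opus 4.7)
For part (1), the plan is to combine the two identities established by Lemmas \ref{l:pushclass} and \ref{l:conv}. Given $\zeta_1,\zeta_2\in\Corr(C;\const{X},\const{X})$ with product $\zeta_1\circ\zeta_2:=\mu_*(\zeta_1\circ_X\zeta_2)$, Lemma \ref{l:pushclass} removes the $\mu_*$ and Lemma \ref{l:conv} rewrites $(\zeta_1\circ_X\zeta_2)_\#$ as $\zeta_{1,\#}\circ\zeta_{2,\#}$, giving the homomorphism identity $(\zeta_1\circ\zeta_2)_\#=\zeta_{1,\#}\circ\zeta_{2,\#}$. Associativity of the source algebra structure follows from Lemma \ref{l:ass} combined with the assumed associativity of $\mu$.

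For part (2), factoring $(-)_\#$ through $j^*$ is immediate from Lemma \ref{l:openpart} applied with $\zeta'=0$: if $j^*\zeta=0$ then $\zeta_\#=0$. The subtler task is to define $(-)_{U,\#}$ on all of $\Corr(C_U;\const{X_U},\const{X_U})$, not merely on the image of $j^*$ (which need not be surjective). My plan is to route the construction through the image correspondence $Z:=\overleftrightarrow{c}(C)\subset X\times_SX$: Lemma \ref{l:pushclass} factors $(-)_\#$ as $\Corr(C;\const{X},\const{X})\xrightarrow{\overleftrightarrow{c}_*}\Corr(Z;\const{X},\const{X})\xrightarrow{(-)_\#}\End_S(f_*\const{X})$, and the argument in the proof of Lemma \ref{l:openpart} (identifying $\Corr(Z;\const{X},\const{X})$ with $\hBM{2d}{Z}(-d)$ and using condition (G-2) to biject $d$-dimensional components of $Z$ with those of $Z_U$) produces a restriction isomorphism $j_Z^*:\Corr(Z;\const{X},\const{X})\isom\Corr(Z_U;\const{X_U},\const{X_U})$. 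I would then define $(-)_{U,\#}$ as the composite
$$\Corr(C_U;\const{X_U},\const{X_U})\xrightarrow{\overleftrightarrow{c}_{U,*}}\Corr(Z_U;\const{X_U},\const{X_U})\xrightarrow{(j_Z^*)^{-1}}\Corr(Z;\const{X},\const{X})\xrightarrow{(-)_\#}\End_S(f_*\const{X}),$$
and the identity $(-)_{U,\#}\circ j^*=(-)_\#$ will follow from the naturality compatibility $\overleftrightarrow{c}_{U,*}\circ j^*=j_Z^*\circ\overleftrightarrow{c}_*$.

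For the algebra-homomorphism property of $(-)_{U,\#}$: the restriction $\mu_U:=\mu|_{(C*C)_U}:C_U*C_U\to C_U$ gives $\Corr(C_U;\const{X_U},\const{X_U})$ an associative convolution, and by Lemma \ref{l:pullbase} the restriction $j^*$ is itself an algebra homomorphism. To conclude, my plan is to repeat the argument of part (1) for the pair $(C_U,\mu_U)$, tracing the routing through $Z$ to verify that the convolution of two classes in $\Corr(C_U;\const{X_U},\const{X_U})$ corresponds under our $Z$-identification to the composition of their $(-)_{U,\#}$ images in $\End_S(f_*\const{X})$. The main technical obstacle I expect is precisely this bookkeeping step — confirming that pushforward along $\overleftrightarrow{c}_U$ and the isomorphism $j_Z^*$ interact correctly with the convolution $\mu_{U,*}\circ(-\circ_{X_U}-)$ — but since all the necessary ingredients (Lemmas \ref{l:pushclass}, \ref{l:conv}, \ref{l:openpart}, \ref{l:pullbase} and \ref{l:ass}) are already available, this is a matter of careful compilation rather than new input.
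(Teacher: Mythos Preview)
Your argument for part (1) is correct and matches the paper's (the paper simply cites Lem.~\ref{l:conv}, with Lem.~\ref{l:pushclass} implicit for removing~$\mu_*$).

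For part (2), your factorization argument via Lem.~\ref{l:openpart} is right, but you have overlooked a simple fact that makes the rest of your detour through $Z=\overleftrightarrow{c}(C)$ unnecessary: the restriction map $j^*$ is already \emph{surjective}. Indeed, identifying $\Corr(C;\const{X},\const{X})\cong\hBM{2d}{C}(-d)$ and $\Corr(C_U;\const{X_U},\const{X_U})\cong\hBM{2d}{C_U}(-d)$, any $d$-dimensional irreducible component $W$ of $C_U$ has closure $\overline{W}$ in $C$ which is again a $d$-dimensional irreducible component (any component $W'\supset\overline{W}$ meets $C_U$ in an open set of dimension $\dim W'$, and $\dim C_U\leq d$ by (G-2)), and $[\overline{W}]\mapsto[W]$ under $j^*$. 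This is exactly the paper's argument: once $j^*$ is surjective, $(-)_{U,\#}$ is automatically well-defined on all of $\Corr(C_U;\const{X_U},\const{X_U})$, and since $j^*$ is an algebra homomorphism (your Lem.~\ref{l:pullbase} observation) and $(-)_\#$ is one by part (1), so is $(-)_{U,\#}$.

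Your route through $Z$ is not wrong in spirit, but the ``bookkeeping'' you flag is real work: you would need to know that $\overleftrightarrow{c}_*$ and $\overleftrightarrow{c}_{U,*}$ intertwine the convolution structures, and $Z$ does not come equipped with an associative $\mu$, so Lemmas~\ref{l:pushclass} and~\ref{l:conv} alone do not close that gap. The surjectivity observation sidesteps this entirely.
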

\begin{proof}
(1) follows from Lem. \ref{l:conv}; 
(2) The factorization follows from Lem. \ref{l:openpart}. Using the identification $\const{X}\cong\DD_X[-2d](-d)$, we can identify $j^*$ with the restriction map
\begin{equation*}
\hBM{2d}{C}(-d)\to\hBM{2d}{C_U}(-d),
\end{equation*}
which is obviously surjective. Therefore the map (\ref{eq:copenact}) is also an algebra homomorphism because the map (\ref{eq:corract}) is.
\end{proof}



\begin{thebibliography}{99}

\bibitem[BBD]{BBD}
A.Beilinson, J.Bernstein, P.Deligne, Faisceaux pervers, {\em Analysis and topology on singular spaces}, I (Luminy, 1981),  5--171, Ast\'erisque, 100, Soc. Math. France, Paris, 1982. 


\bibitem[BM]{BM}
W.Borho, R.MacPherson, Partial resolutions of nilpotent varieties. {\em Analysis and topology on singular spaces}, II, III (Luminy, 1981), 23--74. Ast\'{e}risque 101-102, Soc. Math. France, Paris, 1983.


\bibitem[CG]{CG}
N.Chriss, V.Ginzburg,  {\em Representation theory and complex geometry}. Birkh\"auser Boston, Inc., Boston, MA, 1997.


\bibitem[GKM04]{GKM}
M.Goresky, R.Kottwitz, R.MacPherson, Homology of affine Springer fibers in the unramified case.  Duke Math. J. 121  (2004),  no. 3, 509--561. 


\bibitem[GKM06]{GKM2}
M.Goresky, R.Kottwitz, R.MacPherson, Codimensions of root valuation strata, arXiv:math/0601197.


\bibitem[H]{Hart}
R.Hartshorne, {\em Algebraic Geometry}, Graduate Texts in Mathematics, No. 52. Springer-Verlag, New York-Heidelberg, 1977. 


\bibitem[K]{Kos}
B.Kostant, Lie group representations on polynomial ring. Amer. J. Math.  81(1963), pp. 327--404. 


\bibitem[KL80]{KL}
D.Kazhdan, G.Lusztig, A topological approach to Springer's representations. Adv. in Math. 38 (1980), no. 2, 222--228.


\bibitem[KL88]{KL88}
D.Kazhdan, G.Lusztig, Fixed point varieties on affine flag manifolds, Israel J. Math. 62(1988), no.2, 129--168.


\bibitem[KW]{KW}
A.Kapustin, E.Witten,  Electric-magnetic duality and the geometric Langlands program. Commun. Number Theory Phys. 1 (2007), no. 1, 1--236. 


\bibitem[L81]{L81}
G.Lusztig, Green polynomials and singularities of unipotent classes. Adv. in Math. 42 (1981), no. 2, 169--178.


\bibitem[L96]{L96}
G.Lusztig, Affine Weyl groups and conjugacy classes in Weyl groups.  Transform. Groups  1  (1996),  no. 1-2, 83--97.


\bibitem[LN]{LauN}
G.Laumon, B-C. Ng\^{o}, Le lemme fondamental pour les groupes unitaires,  Ann. of Math. (2)  168  (2008),  no. 2, 477--573. 


\bibitem[N06]{NgoFib}
B-C.Ng\^{o}, Fibration de Hitchin et endoscopie. Invent. Math. 164 (2006), no.2, 399--453.


\bibitem[N08]{NgoFL}
B-C.Ng\^{o}, Le lemme Fondamental pour les alg\`{e}bres de Lie, arXiv:0801.0446.


\bibitem[S]{Sp}
T.A.Springer, Trigonometric sums, Green functions of finite groups and representations of Weyl groups. Invent. Math. 36 (1976),173--207.


\bibitem[SGA5]{SGA5}
A.Grothendieck, L.Illusie, Formule de Lefschetz, expos\'{e} III of SGA 5, Lecture Notes in Mathematics 589, Springer-Verlag, 1977.


\bibitem[YunII]{GSII}
Z.Yun, Towards a global Springer theory II: the double affine action. Preprint.

\bibitem[YunIII]{GSIII}
Z.Yun, Towards a global Springer theory III: Endoscopy and Langlands duality. Preprint.

\end{thebibliography}
\end{document}